\documentclass[12pt]{article}

\AtBeginDocument{}

\usepackage{amsmath}
\usepackage{amssymb}
\usepackage{titling}
\usepackage[margin= 1in]{geometry}
\usepackage{graphicx}
\usepackage{mathtools}
\usepackage{graphicx}
\usepackage{xcolor}
\usepackage{amsthm}
\usepackage[utf8]{inputenc}
\usepackage[english]{babel}
\usepackage{dsfont}
\usepackage[all,cmtip]{xy}
\usepackage{subcaption}
\usepackage{float}
\usepackage{framed,enumitem}
\usepackage[nodisplayskipstretch]{setspace}
\usepackage{array}
\usepackage{tabularray}
\usepackage[export]{adjustbox}
\usepackage{multirow}
\usepackage{pdfpages}
\usepackage{comment}

\renewcommand{\epsilon}{\varepsilon}
\renewcommand{\phi}{\varphi}
\renewcommand{\bar}{\overline}

\renewcommand{\leq}{\leqslant}
\renewcommand{\geq}{\geqslant}

\newcommand{\qq}{\hspace{.15cm}}

\newtheorem{theorem}{Theorem}[section]

\newtheorem{lemma}[theorem]{Lemma}

\theoremstyle{definition}

\begin{document}

\title{Two-Orbit Polytopes}
 
\author{Isabel Hubard
\thanks{Email:\ isahubard@im.unam.mx
}\\
Instituto de Matem\'aticas \\
Universidad Nacional Aut\'onoma de M\'exico\\
04510 Ciudad de M\'exico, M\'exico
\and and \\[.05in]
Egon Schulte
\thanks{Email:\ e.schulte@northeastern.edu
}\\
Northeastern University\\
Boston, Massachusetts,  USA, 02115\bigskip}

\date{ \today } 
\maketitle 

\begin{abstract}

Abstract polytopes are combinatorial structures with distinctive geometric, algebraic, or topological characteristics, that generalize (the face lattice of) traditional polyhedra, polytopes or tessellations. Most research has focused on abstract polytopes with the highest possible symmetry, in particular those that are regular or chiral.
In this paper we study two-orbit polytopes, that is, abstract polytopes whose automorphism groups have exactly two orbits on flags. Such polytopes of rank $n$ fall into $2^n-1$ classes, determined by their local flag configuration.

We develop a general structural theory of two-orbit polytopes of arbitrary rank. In particular, we determine their face- and section-transitivity properties and describe the structure of their automorphism groups via distinguished generating sets and face stabilizer subgroups. These results yield a characterization of the partial order { on the polytope} in terms of the automorphism group. Two-orbit polytopes in different classes behave quite differently.

Our approach extends the group-theoretic framework for regular and chiral polytopes and provides a systematic foundation for the study of polytopes with two flag orbits.

\bigskip\medskip

\noindent
Key Words:  abstract polytope, regular polytope, chiral polytope, 2-orbit polytope.\medskip

\noindent
MSC Subject Classification (2020):\ 52B15, 51M20, 05E16, 20B25
\end{abstract}

\section{Introduction}
\label{intro}
Symmetry is a frequently recurring theme in science. 
Throughout history, the traditional regular polyhedra and polytopes have attracted much attention and inspired many new developments in mathematics and science more generally. 

Over the past few decades, the study of highly-symmetric structures has been extended in several directions centered around an abstract combinatorial polytope theory and a combinatorial notion of regularity. Abstract polytopes are combinatorial structures with distinctive geometric, algebraic, and topological properties. 
They are ranked, partially ordered sets that generalize the face lattices of convex polytopes and tessellations to a much broader class of objects. Much research on abstract polytopes focuses on the classification of regular, chiral, or other highly symmetric abstract or geometric polytopes and their automorphism or symmetry groups. The present paper  investigates abstract two-orbit polytopes, that is, abstract polytopes whose automorphism groups have exactly two orbits on the so-called flags (maximal chains of the poset).

Abstract polytopes, originally called incidence-polytopes, were introduced as a particularly polytope-like class of incidence complexes in Danzer \& Schulte~\cite{DS1982} and were inspired by Coxeter's and Gr\"unbaum's work, with deep connections to Tits' work on incidence geometries surfacing soon afterwards (see~\cite{C1973,C1991,G1977b,G2003,T1961,T1974}). The name ``abstract polytope" was adopted during the writing of the book~\cite{MS2002}. The initial focus of research was on abstract polytopes with maximum possible combinatorial symmetry, the abstract regular polytopes, which are characterized by a flag-transitive automorphism group and an abundance of ``combinatorial reflections". The comprehensive Abstract Regular Polytopes monograph~\cite{MS2002} by McMullen and Schulte is the standard reference on abstract regular polytopes and has motivated significant further research on abstract polytopes in general. 

Abstract chiral polytopes were introduced in Schulte \& Weiss~\cite{SW1991,SW1994} as a generalization of irreflexible maps on closed surfaces to higher rank structures (Coxeter \& Moser~\cite{CM1980}). The term ``chiral" was adopted at that time with Coxeter's blessing. Abstract chiral polytopes have an automorphism group with two orbits on the flags such that any two adjacent flags (differing by just one face) lie in distinct flag orbits. These polytopes have maximal symmetry by ``combinatorial rotation" but unlike abstract regular polytopes, lack sufficient symmetry by ``combinatorial reflection". Finite abstract chiral polytopes of rank $n>4$ were initially elusive, but were shown to exist in Conder, Hubard \& Pisanski~\cite{CHP2008} for rank 5 and subsequently, in Pellicer~\cite{P2010} for arbitrary rank. For example, for any rank $n>4$, both the symmetric group $S_m$ and alternating group $A_m$ occur as automorphism groups of a chiral $n$-polytope with simplex facets for all but finitely many degrees $m$ (see Conder, Hubard \& O’Reilly-Regueiro~\cite{CHO2024}). The recent Abstract Chiral Polytopes monograph by Pellicer~\cite{P2025} provides a state of the art account on abstract chiral polytopes.



While most work has focused on polytopes with maximal symmetry (either by reflections or rotations), this leaves open a natural and highly structured intermediate class: that of two-orbit polytopes.
Two-orbit polytopes are those whose automorphism groups have exactly two orbits on flags. 
These objects form the first genuinely nontrivial level of symmetry beyond the classical cases. 
Unlike chiral polytopes, where adjacency rigidly separates flag orbits, general two-orbit polytopes allow adjacency to either preserve or switch orbits, leading to a range of symmetry behaviors governed by local combinatorial configuration.

This added flexibility has strong and measurable consequences. As already pointed out in \cite{isathesis}, two-orbit polytopes of rank $n$ naturally fall into $2^n-1$ distinct classes, indexed by subsets of ranks indicating which flag adjacencies preserve orbit type. 
This classification has direct consequences for their {combinatorics and symmetry properties}. In particular, it determines their face-transitivity and section-transitivity behavior: while regular and chiral polytopes are fully transitive, two-orbit polytopes may have either one or two orbits on faces or sections, depending on the class.

Examples already arise in rank 3. The {familiar} cuboctahedron and icosidodecahedron, together with their duals, are non-chiral two-orbit polyhedra exhibiting different local configurations while retaining significant symmetry.
In general, two-orbit polytopes need not be equivelar, and their local structure is naturally described by double Schläfli symbols, reflecting the presence of two distinct types of sections {of rank 2 at certain levels in the face poset.}

From the algebraic perspective, two-orbit polytopes admit a robust and uniform group-theoretic description. Their automorphism groups possess distinguished generating systems that extend those of regular and chiral polytopes, enabling a systematic analysis of generators, stabilizers, and the underlying combinatorial structure. 
This interplay between local combinatorics and global symmetry makes two-orbit polytopes particularly amenable to structural classification.

The main purpose of this paper is to develop a general theory of two-orbit polytopes of arbitrary rank. We describe their classification into classes, determine their face- and section-transitivity properties, and establish structural results for their automorphism groups, including explicit generating systems and stabilizer subgroups. Our results show that the behavior of two-orbit polytopes depends strongly on their class, with qualitatively different phenomena arising in different cases, and provide a foundation for further study of polytopes with few flag orbits.


Our starting point is the investigation of abstract two-orbit polyhedra in Hubard~\cite{H2010}. 
The present article has existed in nearly complete form as an unpublished preprint for several years. Our results have inspired and influenced many new developments on symmetric structures with multiple flag orbits, and the paper has been cited repeatedly in the literature, attesting to the significance of our results. We further elaborate on this aspect at the end of this paper.

The article is organized as follows. In Section~\ref{bano}, we review the concept of an abstract polytope and remind the reader of basic results about the automorphism groups of regular or chiral polytopes. Section~\ref{twoorbitspols} explains how two-orbit polytopes of rank $n$ naturally fall into ${2^{n}-1}$ classes $2_I$, each indexed by a proper subset $I$ of the rank set $N:=\{0,\ldots,n-1\}$ called the respective class type set. The combinatorics of two-orbit polytopes heavily depends on their class type set $I$. The chiral polytopes, as two-orbit polytopes with $I=\emptyset$, represent one end of the spectrum. The other end is occupied by the regular polytopes, which have one flag orbit and correspond to the excluded case $I=N$. Section~\ref{twoorbitspols} also determines the face-transitivity and section-transitivity properties of two-orbit polytopes. In Section~\ref{grour}, we establish structure results for the automorphism groups of two-orbit polytopes following the blueprint for similar approaches for regular and chiral polytopes. In particular, distinguished generators for the automorphism group are discovered, face stabilizer subgroups are determined, and the partial order on the two-orbit polytope is characterized in terms of the generators and distinguished subgroups. Finally, Section~\ref{srd} deals with special classes of two-orbit polytopes featuring small reflection deficiency.
\smallskip

\section{Abstract polytopes} 
\label{bano}

In this section we briefly review basic notions and results about abstract polytopes. For more details the reader is referred to McMullen \& Schulte~\cite[Chs. 2,\,3]{MS2002} and Pellicer~\cite{P2025}. 
\smallskip

An (\emph{abstract\/}) \emph{polytope of rank\/} $n$, or simply an \emph{$n$-polytope\/}, is a partially ordered set $\mathcal{P}$ with a strictly monotone rank function with range $\{-1,0, \ldots, n\}$. An element of rank~$j$ is called a \emph{$j$-face\/} of $\mathcal{P}$, and a face of rank $0$, $1$ or $n-1$ is also called a \emph{vertex\/}, \emph{edge\/} or \emph{facet\/}, respectively. A {\em chain of $\mathcal{P}$} is a totally ordered subset of $\mathcal{P}$.  The maximal chains, or \emph{flags}, all contain exactly $n + 2$ faces, including a unique least face $F_{-1}$ (of rank $-1$) and a unique greatest face $F_n$ (of rank $n$). The faces $F_{-1}$ and $F_n$ are said to be the \emph{improper} faces of~$\mathcal{P}$; all other faces are \emph{proper} faces of $\mathcal{P}$. A polytope $\mathcal{P}$ also satisfies the following homogeneity condition known as the {\em diamond condition}:\ whenever $F \leq G$, with $F$ a $(j-1)$-face and $G$ a $(j+1)$-face for some~$j=0,\ldots,n-1$, there are exactly two $j$-faces $H$ with $F \leq H \leq G$. Two flags are said to be \emph{adjacent} if they differ in a single face, or $i$-\emph{adjacent})  if they differ in just their $i$-face. The diamond condition, rephrased,  is saying that every flag $\Phi$ of $\mathcal{P}$ has a unique $i$-adjacent flag, denoted~$\Phi^i$, for each $i=0, \dots, n-1$. Finally, $\mathcal{P}$ is \emph{strongly flag-connected}, in the sense that any two flags $\Phi$ and $\Psi$ of $\mathcal{P}$ can be joined by a finite sequence of successively adjacent flags, each containing $\Phi \cap \Psi$. An abstract polytope of rank 3 is also called an \emph{abstract polyhedron}, or simply \emph{polyhedron}.

In designating flags of an $n$-polytope $\mathcal{P}$, we usually suppress their improper faces, the least face $F_{-1}$ and the largest face $F_n$. For a flag $\Phi$ of $\mathcal{P}$ and integers $i_1,\ldots,i_{l}$ with $l\geq 2$ and $0\leq i_1,\ldots,i_l\leq n-1$, we inductively define a new flag via adjacency,  
\[  \Phi^{i_{1},\ldots,i_{l}}:= (\Phi^{i_{1},\ldots,i_{l-1}})^{i_{l}},\]
using $i_1,\ldots,i_l$ as superscripts. Then, by definition, any two successive flags in a flag sequence of the form 
\[\Phi,\Phi^{i_1},\Phi^{i_{1},i_{2}}, \ldots, \Phi^{i_{1},\ldots,i_{l}}\]  
are adjacent. Note that $\Phi^{i,i} = \Phi$ for each $i$, and that $\Phi^{i,j}=\Phi^{j,i}$ whenever $|i-j|>1$. We sometimes omit the commas between the superscripts and simply write $\Phi^{i_{1}\ldots i_{l}}$.

We also use integers as subscripts on flags, this time to designate faces of a given flag. For a flag $\Phi$ of $\mathcal{P}$ and for $i=-1,0,\dots,n$, we let $\Phi_i$ denote its $i$-face. Thus $\Phi_i$ is a face but $\Phi^i$ is a flag. Then notice that 
\[\Phi = \{\Phi_0,\ldots,\Phi_{n-1}\},\]
where the improper faces of $\Phi$ were suppressed. We also set $\Phi^j_i := (\Phi^j)_i$ for all $i$ and $j$, and recall that this is the $i$-face of the $j$-adjacent flag of $\Phi$. Note that $\Phi^j_i = \Phi_i$ if $j \neq i$, but that $\Phi^i_i \neq \Phi_i$. 

Occasionally we also use subscripts to label successive flags of a sequence. In this case the interpretation of the subscripts should be clear from the context.

For any two faces $F$ of rank $j$ and $G$ of rank $k$ with $F \leq G$, we call
\[G/F := \{ H \in \mathcal{P}\, | \, F \leq H \leq G \}\] 
a \emph{section} of $\mathcal{P}$. This is a $(k-j-1)$-polytope in its own right, and we sometimes refer to it as a $(k-j-1)$-section of $\mathcal{P}$. In particular, we can identify a face $F$ with the section $F/F_{-1}$. Moreover, $F_{n}/F$ is said to be the \emph{co-face of $\mathcal{P}$ at\/} $F$, or the \emph{vertex-figure of $\mathcal{P}$ at\/} $F$ if $F$ is a vertex. Note that if $\Phi$ is a flag and $j \leq k$, then the section of~$\mathcal{P}$ determined by the $j$-face and $k$-face of $\Phi$ is given by $\Phi_k / \Phi_j$. 

An {\em automorphism} of a polytope $\mathcal{P}$ is an order preserving bijection of $\mathcal{P}$ with an order preserving inverse. For a polytope $\mathcal{P}$ we let $\Gamma(\mathcal{P})$ denote its \textit{automorphism group\/}. Each automorphism of $\mathcal{P}$ induces a bijection of the set of flags $\mathcal{F}(\mathcal{P})$ of $\mathcal{P}$ that preserves flag adjacencies. More precisely, if $\gamma$ is an automorphism of $\mathcal{P}$ and $\Phi$ is a flag of $\mathcal{P}$, then
\begin{eqnarray}
\label{auto-adjacent}
(\Phi^i)\gamma = (\Phi\gamma)^i, \;\; \mathrm{for}\ i=0,1,\dots, n-1.
\end{eqnarray}
The flag-connectedness of $\mathcal{P}$ implies that $\Gamma(\mathcal{P})$ acts freely (or semi-regularly) on $\mathcal{F}(\mathcal{P})$. 
\smallskip

A polytope $\mathcal{P}$ is said to be \emph{regular\/} if $\Gamma(\mathcal{P})$ acts  transitively on the flags. In this case $\Gamma(\mathcal{P})$ acts regularly on the flags of $\mathcal{P}$, as the flag stabilizers in the automorphism group of any polytope are trivial. We call a polytope $\mathcal{P}$ \emph{chiral\/} if $\Gamma(\mathcal{P})$ has two flag orbits such that any two adjacent flags are in distinct orbits (see~\cite{P2025,SW1991}). The group $\Gamma(\mathcal{P})$ of a regular or chiral polytope $\mathcal{P}$ has a well-behaved system of \emph{distinguished generators\/} which can be described as follows. In Section~\ref{grour} we will show more generally that the automorphism group of every polytope with at most two flag orbits has a distinguished generating system. 

If $\mathcal{P}$ is a regular $n$-polytope, then $\Gamma(\mathcal{P})$ is generated by involutions $\rho_0,\ldots,\rho_{n-1}$, where $\rho_i$ maps a fixed, or \emph{base\/}, flag $\Phi$ to its $i$-adjacent flag $\Phi^i$, that is,
\[ \Phi\rho_i = \Phi^i.\]
These generators satisfy (at least) the standard Coxeter-type relations for Coxeter groups with string diagrams, 
\begin{equation}
\label{standardrel}
(\rho_i \rho_j)^{p_{ij}} = 1, \textrm{ for } i,j=0, \ldots,n-1,
\end{equation}
where $p_{ii}=1$, $p_{ji} = p_{ij} =: p_{i+1}$ if $j=i+1$, and $p_{ij}=2$ otherwise. The numbers $p_j$ determine the \emph{Schl\"afli symbol} $\{p_{1},\ldots,p_{n-1}\}$ of $\mathcal{P}$. We also say that $\mathcal{P}$ is of (\emph{Schl\"afli}) \emph{type} $\{p_{1},\ldots,p_{n-1}\}$. Moreover, the following {\em intersection property\/} holds,
\begin{equation}
\label{intprop}
\langle \rho_i \mid i \in K \rangle \cap \langle \rho_i \mid i \in J \rangle
= \langle \rho_i \mid i \in {K \cap J} \rangle,
\textrm{ for } K,J \subseteq \{0,1,\ldots,n-1\}.
\end{equation}

For a regular $n$-polytope $\mathcal{P}$, the elements $\sigma_{1},\ldots,\sigma_{n-1}$ defined by $\sigma_{i}:=\rho_{i-1}\rho_{i}$ for $i=1,\ldots,n-1$ generate the {\em rotation subgroup\/} $\Gamma^{+}(\mathcal{P})$ of $\Gamma(\mathcal{P})$, which is of index at most~$2$ in $\Gamma(\mathcal{P})$. Note that $\Gamma^{+}(\mathcal{P})$ consists of all elements of $\Gamma(\mathcal{P})$ which can be written as a product of an even number of distinguished generators $\rho_i$. We call a regular polytope $\mathcal{P}$ {\em directly regular\/} (or orientably regular) if the index of $\Gamma^{+}(\mathcal{P})$ in $\Gamma(\mathcal{P})$ is $2$.

Let $\Gamma$ be a group with involutory generators $\rho_{0},\ldots,\rho_{n-1}$ that satisfy (at least) the standard Coxeter-type relations (for any Coxeter diagram), 
\begin{equation}
\label{standardrel2}
(\rho_i \rho_j)^{p_{ij}} = 1 \;\; \textrm{ for } i,j=0, \ldots,n-1,
\end{equation}
where $p_{ii}=1$ and $p_{ji} = p_{ij}\geq 2$ for $i\neq j$. Then $\Gamma$ is called a {\it C-group\/} if $\Gamma$ and its generators satisfy the intersection property (\ref{intprop}). Each Coxeter group is a C-group. The ``C'' in C-group stands for ``Coxeter'', though not every C-group is a Coxeter group.  A {\it string C-group\/} is a C-group whose underlying Coxeter diagram is a string; that is, $p_{ii}=1$, $p_{ji} = p_{ij}\geq 2$ for $i\neq j$, and $p_{ij}=2$ if $|i-j|\geq 2$. The automorphism group of every regular polytope is a string C-group. In fact, the string C-groups are precisely the automorphism groups of regular polytopes, since, in a natural way, a regular polytope can be constructed (uniquely) from a string C-group $\Gamma$ and its generators $\rho_{0},\ldots,\rho_{n-1}$ (see \cite[Ch. 2E]{MS2002}). We usually identify a regular polytope with its automorphism (string C-) group.

We write $[p_{1},p_{2},\ldots,p_{n-1}]$ for the Coxeter group whose underlying Coxeter diagram is a string with $n$ nodes and with $n-1$ branches labeled $p_{1},p_{2},\ldots,p_{n-1}$. Here we regard the $i$-th branch of the string as missing if $p_{i}=2$. This group is the automorphism group of the \emph{universal} regular $n$-polytope $\{p_{1},\ldots,p_{n-1}\}$ (see \cite[Ch. 3D]{MS2002}). 

If $\mathcal{P}$ is a chiral $n$-polytope, then its group $\Gamma(\mathcal{P})$ is generated by elements $\sigma_1,\ldots,\sigma_{n-1}$ associated with a base flag $\Phi$ as follows. The generator $\sigma_i$ fixes the faces in $\Phi \setminus \{\Phi_{i-1},\Phi_{i}\}$ and cyclically permutes (``rotates"), or shifts by one step, consecutive $i$-faces of $\mathcal{P}$ in the (polygonal) section $\Phi_{i+1}/\Phi_{i-2}$ of rank $2$, according as $\Phi_{i+1}/\Phi_{i-2}$ is finite or infinite. By replacing a generator by its inverse if need be, we can further achieve that 
\[\Phi\sigma_{i}=\Phi^{i,i-1}.\] 
The resulting generators $\sigma_1,\ldots,\sigma_{n-1}$ of $\Gamma(\mathcal{P})$ then satisfy (at least) the relations
\begin{equation}
\label{chiralrel}
\sigma_i^{p_i} = (\sigma_i\sigma_{i+1}\cdot\ldots\cdot\sigma_j)^{2} = 1,
\textrm{ for } i,j=1,\dots,n-1,  \textrm{ with } i<j,
\end{equation}
where as before the numbers $p_i$ determine the {\it type\/} (and Schl\"afli symbol) $\{p_1,\ldots,p_{n-1}\}$ of~$\mathcal{P}$. Note that the relations in (\ref{chiralrel}) are just the standard relations for the rotation subgroup of the Coxeter group $[p_{1},p_{2},\ldots,p_{n-1}]$ determined by the corresponding Schl\"afli symbol. The intersection property for the groups of chiral polytopes is more complicated than that for string C-groups (see \cite{SW1991}), and we shall describe it later in a more general context. 

For a chiral $n$-polytope $\mathcal{P}$ we set 
\[\Gamma^{+}(\mathcal{P}):=\Gamma(\mathcal{P}).\]
Thus, for a chiral polytope $\mathcal{P}$, the {\it rotation subgroup\/} $\Gamma^{+}(\mathcal{P})$ of $\Gamma(\mathcal{P})$ is $\Gamma(\mathcal{P})$ itself.

The rotation subgroups of directly regular polytopes share many properties with the full automorphism groups of chiral polytopes. The distinguishing factor in the case of directly regular polytopes is the presence of certain involutory group automorphisms for the rotation subgroup. More precisely, if $\mathcal{P}$ is a directly regular or chiral $n$-polytope, then $\mathcal{P}$ directly regular if and only if the rotation subgroup $\Gamma^{+}({\cal P})$ of $\Gamma(\mathcal{P})$ admits an involutory group automorphism mapping the set of generators $\sigma_1,\ldots,\sigma_{n-1}$ to the new set of generators 
\begin{equation}
\label{adjgen}
\sigma_{1}^{-1},\sigma_{1}^{2}\sigma_2,\sigma_3,\ldots,\sigma_{n-1},
\end{equation} 
respectively. Note that in either case, directly regular or chiral, the generators in (\ref{adjgen}) are the distinguished generators of $\Gamma^{+}({\cal P})$ with respect to the $0$-adjacent flag $\Phi^0$ of $\Phi$, instead of $\Phi$, chosen as the base flag of $\mathcal{P}$. 

In the case of a directly regular polytope $\mathcal{P}$, the above involutory group automorphism of $\Gamma^{+}({\cal P})$ is induced by conjugation with the generator $\rho_0$ in the full automorphism group $\Gamma(\mathcal{P})$; note here that, since $\mathcal{P}$ is directly regular, $\rho_0$ does not belong to $\Gamma^{+}({\cal P})$. More generally, if $\mathcal{P}$ is directly regular, conjugation with any generator $\rho_j$ induces a similar group automorphism of $\Gamma^{+}(\mathcal{P})$. On the other hand, for a chiral $n$-polytope $\mathcal{P}$, the two flag orbits yield two sets of generators for $\Gamma^{+}({\cal P})$ which are not conjugate in $\Gamma(\mathcal{P})=\Gamma^{+}({\cal P})$. Thus a chiral polytope occurs in two \emph{enantiomorphic} (mirror image) forms.

An $n$-polytope $\mathcal{P}$ is called {\em $i$-face transitive} if its automorphism group $\Gamma(\mathcal{P})$ acts transitively on the $i$-faces. An $n$-polytope $\mathcal{P}$ is said to be {\em fully-transitive} if $\mathcal{P}$ is $i$-face transitive for each $i =0,\dots,n-1$. Regular and chiral polytopes are examples of fully-transitive polytopes, but there are also others, as we will see in the next section.
\smallskip 

Later we frequently require the following technical lemma concerning transitivity properties of certain subgroups in automorphism groups of polytopes on families of flags.

\begin{lemma}
\label{rhoex}
Let $\mathcal{P}$ be an $n$-polytope and $\Phi$ a flag of $\mathcal{P}$. Let $K\subseteq N:=\{0,\ldots,n-1\}$, and let $\Phi_K$ denote the set of faces in $\Phi$ with ranks in $K$. Suppose that for each $i\in\bar{K}:= N\setminus K$ there exists an automorphism $\rho_i$ of $\mathcal{P}$ with $\Phi\rho_{i}=\Phi^{i}$. Then the subgroup $\langle\rho_{i}\,|\,i\in \bar{K}\rangle$ of $\Gamma(\mathcal{P})$ acts transitively on the set of flags of $\mathcal{P}$ that contain $\Phi_K$.
\end{lemma}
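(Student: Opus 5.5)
Let $\Psi$ be any flag of $\mathcal{P}$ containing $\Phi_K$. I will show that $\Psi=\Phi\gamma$ for some $\gamma\in\langle\rho_i\mid i\in\bar{K}\rangle$. The proof uses strong flag-connectedness together with (\ref{auto-adjacent}) to turn a path of adjacent flags into a word in the generators.

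First, since $\Phi\cap\Psi\supseteq\Phi_K$, strong flag-connectedness gives a sequence of successively adjacent flags
\[\Phi=\Psi_0,\Psi_1,\ldots,\Psi_m=\Psi,\]
each containing $\Phi\cap\Psi$, and hence each containing $\Phi_K$. Two consecutive flags $\Psi_{t-1},\Psi_t$ are $j_t$-adjacent for some rank $j_t$. Both contain the face $\Phi_k$ for every $k\in K$, so they agree in all ranks in $K$. Therefore $j_t\in\bar{K}$, and we may write
\[\Psi=\Phi^{j_1,\ldots,j_m}\quad\text{with all } j_t\in\bar{K}.\]

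Next, I will prove by induction on $l$ that
\[\Phi^{j_1,\ldots,j_l}=\Phi\rho_{j_l}\rho_{j_{l-1}}\cdots\rho_{j_1}.\]
The generators appear in reversed order because automorphisms act on the right. The case $l=1$ is the hypothesis $\Phi\rho_{j_1}=\Phi^{j_1}$. For the inductive step, write $\gamma_{l-1}=\rho_{j_{l-1}}\cdots\rho_{j_1}$, so that $\Phi^{j_1,\ldots,j_{l-1}}=\Phi\gamma_{l-1}$. Then
\[\Phi^{j_1,\ldots,j_l}=(\Phi\gamma_{l-1})^{j_l}=(\Phi^{j_l})\gamma_{l-1}=\Phi\rho_{j_l}\gamma_{l-1}.\]
The middle equality is (\ref{auto-adjacent}) applied to $\gamma_{l-1}$. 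Taking $l=m$ shows that $\Psi$ lies in the orbit of $\Phi$ under $\langle\rho_i\mid i\in\bar{K}\rangle$.

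Finally, every element of this subgroup fixes the faces in $\Phi_K$. Indeed, each $\rho_i$ with $i\in\bar{K}$ maps $\Phi$ to $\Phi^i$, which contains $\Phi_K$. An automorphism sends $\Phi_k$ to the $k$-face of the image flag, so each $\rho_i$ fixes $\Phi_k$ for $k\in K$. Thus the subgroup preserves the set of flags containing $\Phi_K$ and acts on it. Any two flags in this set lie in the orbit of $\Phi$, so the action is transitive.

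The only delicate step is getting the order of the product right. A naive guess would be $\Phi\rho_{j_1}\cdots\rho_{j_l}$. The induction instead expresses each new adjacency through the original base flag $\Phi$ by means of (\ref{auto-adjacent}), which is what produces the reversed order. All other steps are immediate from the definitions.
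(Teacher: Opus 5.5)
Your proof is correct and follows the paper's own argument. Strong flag-connectedness gives $\Psi=\Phi^{j_1,\ldots,j_m}$ with all $j_t\in\bar{K}$, and induction using (\ref{auto-adjacent}) yields $\Phi^{j_1,\ldots,j_m}=\Phi\rho_{j_m}\cdots\rho_{j_1}$. You also check that each $\rho_i$ fixes the faces in $\Phi_K$, so the subgroup genuinely acts on the set of flags containing $\Phi_K$; the paper leaves this small point implicit.
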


\begin{proof}
We adapt the arguments of the proof of~\cite[Prop.\,2B4]{MS2002}. Let $\Psi$ be any flag of $\mathcal{P}$ containing the subchain $\Phi_K$ of $\Phi$. By the strong flag-connectedness of $\mathcal{P}$, there exist elements $i_1, i_2, \dots, i_l \in \bar{K}$ such that $\Psi=\Phi^{i_1, i_2, \dots, i_l}$. We now proceed by induction on $l$ to show that 
\begin{equation}
\label{rhoaction}
\Phi^{i_1,\dots,i_{l-1},i_l} = \Phi\rho_{i_l}\rho_{i_{l-1}}\dots \rho_{i_1},
\end{equation}
which in turn implies that $\Psi$ and $\Phi$ must lie in the same orbit under $\langle\rho_{i}\,|\,i\in \bar{K}\rangle$. For $l=1$ the statement in (\ref{rhoaction}) is guaranteed to hold by our assumptions on $\Phi$; in fact, in this case $\Phi^{i_1}=\Phi\rho_{i_1}$ and we are done. Now suppose inductively that (\ref{rhoaction}) holds for an integer $l\geq 1$. Then, by (\ref{auto-adjacent}), 
\[\begin{array}{lllllllll}
\Phi^{i_1,\dots, i_l,i_{l+1}}\!&\!\!=\!\!&\!(\Phi^{i_1,\dots, i_l})^{i_{l+1}} \!&\!\!=\!\!&\! (\Phi\rho_{i_{l}}\dots \rho_{i_1})^{i_{l+1}}\!&&& \\[.03in]
&&&\!\!=\!\! &\!(\Phi^{i_{l+1}})\rho_{i_{l}}\dots \rho_{i_1}\! &\!\!=\!\!&\! (\Phi\rho_{i_{l+1}})\rho_{i_{l}}\dots \rho_{i_1}\!&\!\!=\!\!&\! \Phi\rho_{i_{l+1}}\rho_{i_{l}}\dots \rho_{i_1},
\end{array}\]
and again we are done. Thus the equation (\ref{rhoaction}) can be established by repeated application of~(\ref{auto-adjacent}). This settles the lemma.
\end{proof}

The reader should observe that the superscripts for the flags on the left side of (\ref{rhoaction}) occur in reverse order as the subscripts of the generators in the product on the right side of~(\ref{rhoaction}). Thus left multiplication of an automorphism $\alpha\in\Gamma(\mathcal{P})$ by a generator $\rho_i$ has the following effect:\ if $\Phi\alpha=\Phi^t$ for some sequence of superscripts $t$, then by a slight abuse of notation, 
\[ \Phi\rho_{i}\alpha = \Phi^{t,i} .\]

Throughout the paper, we frequently make use of the fact that an automorphism of a polytope is uniquely determined by its effect on a single flag.

\section{Two-orbit polytopes}
\label{twoorbitspols}

An $n$-polytope $\mathcal{P}$ is said to be a {\em two-orbit polytope} if its automorphism group $\Gamma(\mathcal{P})$ has exactly two orbits on the flags of $\mathcal{P}$. 

Chiral polytopes are particular examples of two-orbit polytopes. Among two-orbit polytopes, chiral polytopes are characterized by the property that any two adjacent flags are in distinct orbits. But unlike for chiral polytopes, there is a priori no condition on a generic two-orbit polytope that requires certain pairs of adjacent flags to be in the same or in different orbits under $\Gamma(\mathcal{P})$.

All polytopes of rank $2$ (polygons) are regular, so two-orbit polytopes must necessarily have rank at least $3$. The well-known cuboctahedron and icosidodecahedron and their duals, the rhombic dodecahedron and rhombic triacontahedron respectively, as well as their Petrials, are simple examples of non-chiral two-orbit polyhedra. For a general investigation of two-orbit polyhedra we refer to \cite{H2010}.
\bigskip

\begin{figure}[htbp]
\begin{center}
\includegraphics[width=3.4cm]{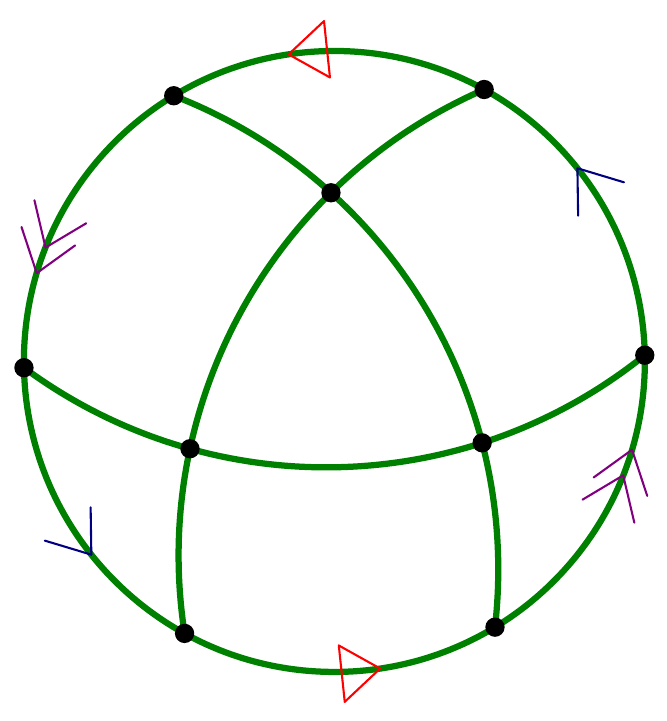} \qquad\quad
\includegraphics[width=3.4cm]{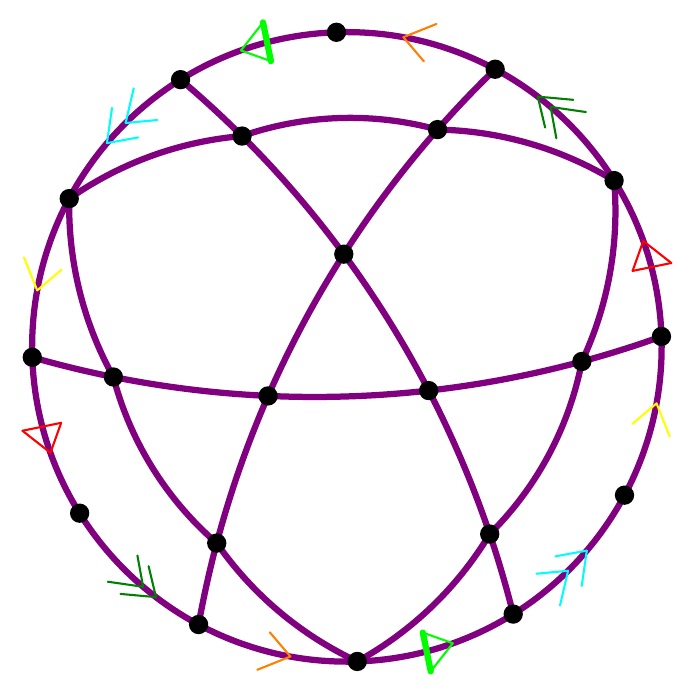}
\caption{The hemi-cuboctahedron and hemi-icosidodecahedron are two-orbit polyhedra in the projective plane which are not chiral.}
\label{hemi}
\end{center}
\end{figure}

We begin our study of two-orbit polytopes with the following key observation which exploits the fact that there are precisely two flag orbits (see \cite[Lemma 2]{H2010}). The short proof is included for completeness.

\begin{lemma}
\label{twoorbits}
Let $\mathcal{P}$ be a two-orbit polytope of rank $n$, let $\Phi,\Psi$ be any flags of $\mathcal{P}$, and let $0\leq i\leq n-1$. If $\Phi$ and $\Phi^i$ lie in the same flag orbit under $\Gamma(\mathcal{P})$, then $\Psi$ and $\Psi^i$ also lie in the same flag orbit under $\Gamma(\mathcal{P})$.
\end{lemma}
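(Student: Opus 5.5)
The plan is a case distinction on which of the two flag orbits the flag $\Psi$ lies in, using only the fact that automorphisms commute with adjacency, as recorded in (\ref{auto-adjacent}). Denote the two flag orbits by $\mathcal{O}_1$ and $\mathcal{O}_2$, and suppose without loss of generality that $\Phi\in\mathcal{O}_1$. By hypothesis $\Phi^i\in\mathcal{O}_1$ as well.

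\emph{First case: $\Psi\in\mathcal{O}_1$.} Then there is some $\gamma\in\Gamma(\mathcal{P})$ with $\Psi=\Phi\gamma$. By (\ref{auto-adjacent}),
\[\Psi^i=(\Phi\gamma)^i=(\Phi^i)\gamma .\]
Hence $\Psi^i$ lies in the orbit of $\Phi^i$, which is $\mathcal{O}_1$. So $\Psi$ and $\Psi^i$ lie in the same orbit.

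\emph{Second case: $\Psi\in\mathcal{O}_2$.} Argue by contradiction and suppose $\Psi^i\notin\mathcal{O}_2$. Since there are only two orbits, this forces $\Psi^i\in\mathcal{O}_1$. Write $\Psi^i=\Phi\gamma$ for some $\gamma\in\Gamma(\mathcal{P})$. Applying (\ref{auto-adjacent}) and $\Psi^{i,i}=\Psi$ gives
\[\Psi=(\Psi^i)^i=(\Phi\gamma)^i=(\Phi^i)\gamma .\]
This places $\Psi$ in the orbit of $\Phi^i$, namely $\mathcal{O}_1$, contradicting $\Psi\in\mathcal{O}_2$. Therefore $\Psi^i\in\mathcal{O}_2$.

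There is no real obstacle here. The only point needing care is the second case, where the two-orbit hypothesis is used essentially: the complement of $\mathcal{O}_2$ is exactly $\mathcal{O}_1$. The first case, by contrast, holds for any polytope.
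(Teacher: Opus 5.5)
Your proof is correct and is essentially the paper's argument. The paper also splits according to whether $\Psi$ or $\Psi^i$ lies in the orbit of $\Phi$, and uses (\ref{auto-adjacent}) to move the other flag into that same orbit. Your version phrases this as a case split on the orbit of $\Psi$, and writes out explicitly the second case that the paper dismisses as ``similar''.
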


\begin{proof}
Suppose that $\Phi$ and $\Phi^i$ lie in the same flag orbit under $\Gamma(\mathcal{P})$ but that $\Psi$ and $\Psi^i$ lie in a different flag orbits under $\Gamma(\mathcal{P})$. Since the polytope $\mathcal{P}$ has just two flag orbits, one of $\Psi$ or $\Psi^i$ must lie in the same flag orbit as both $\Phi$ and $\Phi^i$. If $\Psi$ itself lies in the same flag orbit as $\Phi$ and thus $\Psi=\Phi\alpha$ for some $\alpha\in\Gamma(\mathcal{P})$, then (\ref{auto-adjacent}) shows that  
\[ \Psi^{i} = (\Phi\alpha)^{i}= (\Phi^{i})\alpha.\]
Thus, since the orbits of $\Phi$ and $\Phi^i$ are the same, both $\Psi$ and $\Psi^i$ lie in the same orbit as $\Phi$ and $\Phi^i$. This is a contradiction. The arguments for the case when $\Psi^i$ lies in the same flag orbit as $\Phi$ are similar.
\end{proof}

As a consequence of Lemma~\ref{twoorbits}, two-orbit $n$-polytopes naturally fall into different {\em classes\/}, each indexed by a {\em proper\/} subset $I$ of 
\[ N:=\{0, \dots, n-1\}  \]
called the {\it class type set\/} of the class.
(Throughout we use $\subseteq$ to indicate inclusion of sets, and $\subset$ to indicate strict inclusion of sets.) For $I\subset N$, a two-orbit $n$-polytope $\mathcal{P}$ is said to belong to the {\em class $2_I$} if $I$ contains precisely those elements $i$ of $N$ for which a flag and its $i$-adjacent flag are in the same flag orbit under $\Gamma(\mathcal{P})$.

By Lemma~\ref{twoorbits}, the class $2_I$ of a two-orbit polytope is well-defined. Note that the chiral $n$-polytopes are precisely the two-orbit $n$-polytopes in the class $2_{\emptyset}$, obtained for the extreme case $I = \emptyset$.

We also define the class $2_N$ to consist of all regular, or {\em one-orbit\/}, $n$-polytopes. This is reasonable as a polytope $\mathcal{P}$ for which each flag lies in the same flag orbit as all of its adjacent flags must necessarily have one flag orbit under $\Gamma(\mathcal{P}$ and hence be regular (see \cite[Prop. 2B4]{MS2002}). This also explains why, for a two-orbit polytope, its class type set $I$ must be a proper subset of $N$ and thus have at most $n-1$ elements. 

It is convenient to introduce notation for the complement of a subset $J$ of $N$ in $N$. For $J\subseteq N$ define 
\[\bar{J}:=N \setminus I.\]
The cardinality of $\bar{J}$ is called the {\it rank deficiency} of $J$ and is denoted ${\rm rd}(J)$. Thus
\[ {\rm rd}(J) := n- |J|.\]
We apply this concept primarily for class type sets $I$ of two-orbit polytopes.

For a two-orbit $n$-polytope or regular $n$-polytope $\mathcal{P}$, the rank deficiency of its class type set $I$ is called the {\it reflection deficiency} of $\mathcal{P}$ and is denoted ${\rm rd}(\mathcal{P})$. Thus,
\[ {\rm rd}(\mathcal{P}) =|\bar{I}|.\]
Informally speaking, for a two-orbit $n$-polytope $\mathcal{P}$ in the class $2_I$, the reflection deficiency of $\mathcal{P}$ is the total number of generators $\rho_i$ missing from a standard generating set for the automorphism group $\Gamma(\mathcal{P})$ of a regular $n$-polytope. Regular polytopes have reflection deficiency $0$, and chiral polytopes have reflection deficiency $n$. A typical two-orbit polytope has reflection deficiency at least 1 and lies in between these two extreme cases. The structure of a two-orbit polytope depends significantly on its reflection deficiency. We will see that two-orbit polytopes with reflection deficiency values of $1$, $2$, and $\geq 3$ behave quite differently. 

Note that the class type set $I$ completely encodes the data about the local configurations of flags belonging to the same orbit. For illustrations in rank 3, see \cite[Figures 1,\,2]{H2010}. In rank 3 there are seven classes of two-orbit polyhedra. The polyhedra in the five classes $2_\emptyset$, $2_{\{0\}}$, $2_{\{1\}}$, $2_{\{2\}}$ or $2_{\{0,2\}}$ are {\em equivelar\/} of some type $\{p,q\}$, meaning that their facets are $p$-gons and their vertices are $q$-valent for some $p$ and $q$. In particular, $p$ must be even for the polyhedra in $2_{\{0\}}$, $2_{\{1\}}$ or $2_{\{0,2\}}$, and $q$ must be even for those in  $2_{\{1\}}$, $2_{\{2\}}$ or $2_{\{0,2\}}$. Equivelarity generally breaks down for the two remaining classes, $2_{\{0,1\}}$ and $2_{\{1,2\}}$, as the following examples show. The polyhedra in the class $2_{\{0,1\}}$ are vertex transitive; the cuboctahedron and icosidodecahedron lie in this class and have two kinds of faces, triangles and squares or pentagons, respectively. Dually, the polyhedra in $2_{\{1,2\}}$ are face transitive; examples are the rhombic dodecahedron and rhombic triacontahedron which have vertices of degree 3 and 4 or 5, respectively.
\smallskip

Before proceeding, we note an important consequence of the definition of the class type set $I$ for sequences of successively adjacent flags of two-orbit polytopes such as 
\[\Phi,\Phi^{i_1},\Phi^{i_{1},i_{2}}, \ldots, \Phi^{i_{1},\ldots,i_{l}}.\]
So, let $\mathcal{P}$ be a two-orbit $n$-polytope in the class $2_I$.  As we move along the flag sequence of $\mathcal{P}$ from $\Phi$ to $\Phi^{i_{1},\ldots,i_{l}}$, we change flag orbits under $\Gamma(\mathcal{P})$ each time we encounter a superscript that does not belong to $I$, but otherwise leave flag orbits unchanged. In particular this permits us to determine whether or not the first flag $\Phi$ and last flag $\Phi^{i_{1},\ldots,i_{l}}$ belong to the same flag orbit under $\Gamma(\mathcal{P})$. For example, for all $i \in I$ and $j,k \notin I$, the flags $\Phi^i$, $\Phi^{j,k}$ and $\Phi^{j,i,j}$ all are in the same flag orbit under $\Gamma(\mathcal{P})$ as $\Phi$, as each can be joined to $\Phi$ by a sequence of successively adjacent flags in which overall an even number of superscripts not contained in $I$ occurs. 

As a direct consequence of these considerations (or alternatively, of Lemma~\ref{rhoex}) we mention the following useful lemma which deals with a particularly interesting special case. 

\begin{lemma}
\label{omegaorb}
Let $\mathcal{P}$ be a two-orbit $n$-polytope in the class $2_I$, with $I\subset N$, and let $\Omega$ be a chain of $\mathcal{P}$. Let $t(\Omega)$ denote the set of ranks of the proper faces of $\Omega$, and suppose that $\bar{I}\subseteq t(\Omega)$. Then any two flags of $\mathcal{P}$ containing $\Omega$ lie in the same flag orbit under $\Gamma(\mathcal{P})$.
\end{lemma}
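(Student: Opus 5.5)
The plan is to use strong flag-connectedness together with the parity observation made just before the statement. Let $\Psi$ and $\Psi'$ be two flags of $\mathcal{P}$ that both contain $\Omega$. Then $\Omega\subseteq \Psi\cap\Psi'$. By the strong flag-connectedness of $\mathcal{P}$, there is a sequence of successively adjacent flags from $\Psi$ to $\Psi'$ in which every flag contains $\Psi\cap\Psi'$, and hence contains $\Omega$. Writing this sequence as $\Psi,\Psi^{i_1},\Psi^{i_1,i_2},\ldots,\Psi^{i_1,\ldots,i_l}=\Psi'$, I would first show that no superscript $i_m$ lies in $t(\Omega)$. Indeed, passing from $\Psi^{i_1,\ldots,i_{m-1}}$ to its $i_m$-adjacent flag changes exactly the $i_m$-face. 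Both flags contain $\Omega$, and a flag has a unique face of each rank, so the $i_m$-face cannot be a face of $\Omega$. Thus $i_m\notin t(\Omega)$ for every $m$.

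Next I combine this with the hypothesis $\bar{I}\subseteq t(\Omega)$. It follows that every superscript $i_m$ lies in $N\setminus t(\Omega)\subseteq I$. By the definition of the class $2_I$, together with Lemma~\ref{twoorbits} (which makes the class well defined for all flags), each step of the sequence stays within the same flag orbit under $\Gamma(\mathcal{P})$. By induction along the sequence, $\Psi$ and $\Psi'$ therefore lie in the same orbit.

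As an alternative route, one could invoke Lemma~\ref{rhoex} with base flag $\Psi$ and $K=t(\Omega)$. For each $i\in\bar{K}\subseteq I$, the flags $\Psi$ and $\Psi^i$ lie in the same orbit, so there is an automorphism $\rho_i$ with $\Psi\rho_i=\Psi^i$. Since $\Omega$ (with its improper faces suppressed) is exactly $\Psi_K$, the group $\langle\rho_i\mid i\in\bar{K}\rangle$ is transitive on the flags containing $\Omega$. This gives the claim directly.

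The only step needing care is the first one: confirming that the strong flag-connectedness path uses only ranks outside $t(\Omega)$. This rests on every intermediate flag containing $\Omega$, which is exactly what strong flag-connectedness provides. The rest is routine bookkeeping.
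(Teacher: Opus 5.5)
Your proof is correct and follows essentially the same route as the paper. Strong flag-connectedness gives a path of flags all containing $\Omega$, so every adjacency occurs at a rank outside $t(\Omega)$ and hence in $I$, and the flag orbit never changes along the path. Your alternative via Lemma~\ref{rhoex} is the one the paper itself mentions just before the statement.
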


\begin{proof}
If $\Phi$ and $\Psi$ are two flags containing $\Omega$, then the strong flag-connectedness of $\mathcal{P}$ gives a sequence of successively adjacent flags, all containing $\Omega$, which joins $\Phi$ and $\Psi$ in such a way that all successive flag-adjacencies in this sequence occur at ranks not contained in $t(\Omega)$. Hence, as $\bar{I}\subseteq t(\Omega)$, all successive flag-adjacencies must occur at ranks contained in $I$. Now the previous considerations show that successive flags in the sequence, and thus $\Phi$ and $\Psi$, lie in the same orbit under $\Gamma(\mathcal{P})$.
\end{proof}

Two-orbit polytopes, by definition, have just two flag-orbits. This immediately raises the question about the number of face-orbits for the faces of any given rank in a two-orbit polytope. Clearly, a two-orbit polytope can have at most two orbits on the faces of any rank, since any automorphism that maps a flag to another flag, also maps the $i$-face of the first flag to the $i$-face of the second flag for any rank $i$. 

The following theorem, taken from \cite{H2010}, answers the question and describes completely the face transitivity properties of the automorphism group of a two-orbit polytope. Recall that a polytope is said to be fully-transitive if its automorphism group is transitive on the faces of each rank.

\begin{theorem}
\label{almost-fully-transitive}
Let $\mathcal{P}$ be a two-orbit $n$-polytope in the class $2_I$, with $I \subset N$. \\[.02in]
(a)\ Then $\mathcal{P}$ is fully-transitive if and only if ${\rm rd}(I)>1$. \\[.02in]
(b)\ If ${\rm rd}(I)=1$ and $\bar{I} = \{j\}$ for some $j\in N$, then $\mathcal{P}$ is $i$-face transitive for every $i$ in $N$ with $i \neq j$; moreover, $\mathcal{P}$ has exactly two orbits on the set of $j$-faces and these are represented by the $j$-faces in any pair of $j$-adjacent flags.
\end{theorem}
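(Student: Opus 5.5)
The plan is to analyze, for a fixed rank $i$, whether any two flags can be joined, after applying automorphisms, so that their $i$-faces are identified. Since $\Gamma(\mathcal{P})$ has two flag orbits, there are at most two orbits on $i$-faces. The $i$-faces of flags in one flag orbit form a single face orbit. So $\mathcal{P}$ is $i$-face transitive if and only if some flag $\Psi$ in the second orbit has the same $i$-face as some flag $\Phi$ in the first orbit. Equivalently, $\mathcal{P}$ is $i$-face transitive if and only if there exist two flags sharing their $i$-face that lie in different flag orbits. This reduces both parts to a question about flags through a fixed $i$-face.

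\emph{Sufficiency of transitivity.} Fix $i\in N$ and suppose $\bar{I}$ contains some $j\neq i$. Take any flag $\Phi$ and its $j$-adjacent flag $\Phi^j$. These differ only in their $j$-face, so they share $\Phi_i$. Since $j\notin I$, they lie in different flag orbits. By the reduction, $\mathcal{P}$ is $i$-face transitive. If ${\rm rd}(I)>1$, such a $j\neq i$ exists for every $i$, which gives full transitivity and the ``if'' direction of (a). If $\bar{I}=\{j\}$, the same argument gives $i$-face transitivity for every $i\neq j$, which is the first claim of (b).

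\emph{The exceptional rank.} Now suppose $\bar{I}=\{j\}$. Apply Lemma~\ref{omegaorb} with the chain $\Omega=\{F\}$ consisting of a single $j$-face $F$. Then $t(\Omega)=\{j\}\supseteq\bar{I}$, so all flags containing $F$ lie in one flag orbit. Hence no two flags sharing a $j$-face lie in different orbits, and by the reduction $\mathcal{P}$ is not $j$-face transitive. So there are exactly two orbits on $j$-faces. For any flag $\Phi$, the flags $\Phi$ and $\Phi^j$ are in different flag orbits. Their $j$-faces must therefore be in different face orbits: otherwise some automorphism would carry $\Phi^j_j$ to $\Phi_j$, and by Lemma~\ref{omegaorb} the image of $\Phi^j$ would lie in the orbit of $\Phi$, a contradiction. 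This completes (b) and also the ``only if'' direction of (a), since ${\rm rd}(I)\geq 1$ always holds for $I\subset N$.

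The only delicate step is the reduction at the start. It needs the observation that two flags with $i$-faces in the same orbit can be moved, by an automorphism, to flags sharing an $i$-face. That step is routine, so I expect no real obstacle beyond stating it carefully.
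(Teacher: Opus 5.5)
Your proof is correct and is essentially the paper's argument. The paper likewise shows that failure of $i$-face transitivity forces all flags through a given $i$-face into one orbit, which is your reduction read contrapositively. It excludes $j$-face transitivity when $\bar{I}=\{j\}$ by the same strong-flag-connectedness argument that you package via Lemma~\ref{omegaorb}; the paper also adds a separate, redundant argument for $i$-face transitivity when $i\in I$, which your direct use of $\Phi$ and $\Phi^j$ makes unnecessary.
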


\begin{proof}
For the reader's convenience we are including the proof, following the arguments in \cite[pp.\,947-948]{H2010}. Let $\Phi:=\{\Phi_0,\ldots,\Phi_{n-1}\}$ be the base flag of $\mathcal{P}$. 

We first show that if $\mathcal{P}$ is not $j$-face transitive for some $j\in N$, then necessarily ${\rm rd}(I)=1$ (that is, $|I|=n-1$) and $I=N\setminus\{j\}$. Suppose $\mathcal{P}$ is not $j$-face transitive. Define $F:=\Phi_j$, and let $G$ be a $j$-face of $\mathcal{P}$ such that $F$ and $G$ lie in different $j$-face orbits under $\Gamma(\mathcal{P})$. Then any two flags $\Lambda,\Psi$ with  $F\in\Lambda$ and $G\in\Psi$ must necessarily lie in different flag orbits, as otherwise their $j$-faces $F,G$ would lie in the same $j$-face orbit. As $\mathcal{P}$ has only two flag orbits, it follows that the flags $\Lambda$ with $F\in\Lambda$ must all lie in the same flag orbit, namely the flag orbit as $\Phi$. In particular, for each $i\in N$ with $i\neq j$, the $i$-adjacent flag $\Phi^i$, which contains $F$, must lie the same orbit as~$\Phi$. Thus, by the definition of $I$, we must have $I= N\setminus\{j\}$. 

It follows that $\mathcal{P}$ must be fully-transitive whenever ${\rm rd}(I)>1$. It remains to investigate the case when ${\rm rd}(I)=1$ and $I=N\setminus\{j\}$ for some $j\in N$. 

In this case we begin by proving that $\mathcal{P}$ is $i$-face transitive for each $i\in I$. Our arguments actually work for any choice of $I$, but when $I=N\setminus\{j\}$ they immediately imply that $\mathcal{P}$ is $i$-face transitive for all ranks $i\in N$ with $i\neq j$. 

So let $i\in I$, and let $F$ be an $i$-face of $\mathcal{P}$. Let $\Phi$ be a flag containing $F$, and let $\Phi=\Lambda_0,\Lambda_1,\ldots,\Lambda_m=\Psi$ be a sequence of successively adjacent flags joining $\Phi$ and $\Psi$. Now, considering the sequence of $i$-faces of the flags in the flag sequence, we observe that any two successive $i$-faces either coincide or are the $i$-faces in a pair of $i$-adjacent flags. By deleting duplicates, this gives a sequence in which any two successive $i$-faces are the $i$-faces in a pair of $i$-adjacent flags. Since $i\in I$, any pair of $i$-adjacent flags lies in the same flag orbit under $\Gamma(\mathcal{P})$ and thus their $i$-faces must lie in the same $i$-face orbit under $\Gamma(\mathcal{P})$. It follows that any two successive $i$-faces in the $i$-face sequence lie in the same orbit under $\Gamma(\mathcal{P})$, and therefore that $F$ lies in the same orbit as $\Phi_i$, the $i$-face of $\Phi$. Thus $\mathcal{P}$ is $i$-face transitive.

It remains to prove that $\Gamma(\mathcal{P})$ has exactly two orbits on the $j$-faces of $\mathcal{P}$. We show that otherwise the flags $\Phi$ and $\Phi^j$ would lie in the same orbit under $\Gamma(\mathcal{P})$, which is impossible since $j\notin I$. 

So, suppose to the contrary that $\mathcal{P}$ is $j$-face transitive. Then the $j$-faces of the flags $\Phi$ and $\Phi^j$ are in the same orbit, so there exists $\gamma\in\Gamma(\mathcal{P})$ such that $\Phi_j\gamma=\Phi^{j}_{j}$. Let $\Psi:=\Phi\gamma$ and consider the sequence of successively adjacent flags $\Phi^j=\Lambda_0,\Lambda_1,\ldots,\Lambda_m=\Psi$, all containing $\Phi^{j}\cap\Psi$, joining the flags $\Phi^j$ and $\Psi$. Note that $\Phi^{j}_{j}=\Psi_j$, and that this $j$-face lies in $\Phi^{j}\cap\Psi$ and thus is shared by every flag of the sequence. It follows that any two successive flags of the sequence are adjacent at ranks different from $j$ (that is, at ranks contained in $I$) and therefore must lie in the same flag orbit under $\Gamma(\mathcal{P})$. This shows that $\Phi^j$ and $\Psi$ belong to the same flag orbit. Thus $\Phi^j$ and $\Phi$ are in the same flag orbit, which is a contradiction.
\end{proof}
\medskip

Our next theorem concerns the action of the automorphism group $\Gamma(\mathcal{P})$ on the sections of a two-orbit polytope $\mathcal{P}$, and completely describes the transitivity properties of $\Gamma(\mathcal{P})$ on sets of comparable sections.

For an $n$-polytope $\mathcal{P}$ and ranks $i$ and $j$ with $-1 \leq i \leq j \leq n$, let $\mathcal{S}_{i,j}=\mathcal{S}_{i,j}(\mathcal{P})$ denote the set of all sections $G/F$ of $\mathcal{P}$ that are determined by an incident pair of an $i$-face $F$ and a $j$-face $G$ of $\mathcal{P}$. Clearly, $\Gamma(\mathcal{P})$ maps sections in $\mathcal{S}_{i,j}$ to sections in $\mathcal{S}_{i,j}$; and if $\Phi$ and $\Psi$ are flags in the same orbit under $\Gamma(\mathcal{P})$, then the corresponding sections $\Phi_{j}/\Phi_{i}$ and $\Psi_{j}/\Psi_{i}$ in $\mathcal{S}_{i,j}$ are also in the same orbit under $\Gamma(\mathcal{P})$.
\smallskip

\begin{theorem}
\label{sections}
Let $\mathcal{P}$ be a two-orbit $n$-polytope in the class $2_I$, with $I \subset N$, and let $-1 \leq i \leq j \leq n$.\\[.02in] 
(a)\ If $\bar{I} \not \subseteq \{i,j\}$, then $\Gamma(\mathcal{P})$ acts transitively on 
$\mathcal{S}_{i,j}(\mathcal{P})$; in particular, any two sections in $\mathcal{S}_{i,j}(\mathcal{P})$ are isomorphic.\\[.02in]
(b)\ If $\bar{I} \subseteq \{i,j\}$, then $\Gamma(\mathcal{P})$ has two orbits on $\mathcal{S}_{i,j}(\mathcal{P})$ and these are represented by the two sections $\Phi_{j}/\Phi_{i}$ and $\Phi_{j}^{k}/\Phi_{i}^{k}$ of $\mathcal{P}$, where $k$ is any element of $N$ with $k\notin I$ and $\Phi,\Phi^k$ is any pair of $k$-adjacent flags of $\mathcal{P}$.
\end{theorem}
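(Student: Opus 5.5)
We reduce the action on sections to the action on flags. Every section $G/F$ in $\mathcal{S}_{i,j}$ has the form $\Phi_j/\Phi_i$ for some flag $\Phi$ with $\Phi_i=F$ and $\Phi_j=G$. Flags in the same orbit give sections in the same orbit, and $\Gamma(\mathcal{P})$ has only two flag orbits. Hence $\Gamma(\mathcal{P})$ has at most two orbits on $\mathcal{S}_{i,j}$. The task is therefore to decide when the two flag orbits produce the same section orbit. Throughout we use $\Omega:=\{\Phi_i,\Phi_j\}$, the chain determined by a section; only its proper faces matter, so $t(\Omega)=\{i,j\}\cap N$.

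\emph{Part (a).} Suppose $\bar I\not\subseteq\{i,j\}$, and pick $k\in\bar I$ with $k\neq i,j$. Since the orbits of $\Phi$ and $\Phi^k$ are different, every flag lies in the orbit of $\Phi$ or of $\Phi^k$. Because $k\neq i,j$, the flags $\Phi$ and $\Phi^k$ share their $i$- and $j$-faces, so they determine the same section $\Phi_j/\Phi_i=\Phi^k_j/\Phi^k_i$. Now take any section $G/F$ and a flag $\Psi$ through $F$ and $G$. Then $\Psi=\Phi\alpha$ or $\Psi=\Phi^k\alpha$ for some $\alpha\in\Gamma(\mathcal{P})$. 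In either case $G/F=(\Phi_j/\Phi_i)\alpha$, which gives transitivity; isomorphism of all sections in $\mathcal{S}_{i,j}$ follows immediately. This is essentially the argument from the first part of Theorem~\ref{almost-fully-transitive}.

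\emph{Part (b), setup.} Suppose $\bar I\subseteq\{i,j\}$. Since $I\neq N$, $\bar I$ is nonempty, and every $k\in\bar I$ lies in $\{i,j\}\cap N$. Therefore $\bar I\subseteq t(\Omega)$, and Lemma~\ref{omegaorb} shows that all flags through a given section lie in a single flag orbit. Define a map from sections to flag orbits by sending $G/F$ to the orbit of any flag containing $F$ and $G$. By the previous sentence this map is well defined. It is also $\Gamma(\mathcal{P})$-invariant: if $\alpha$ maps $G/F$ to $G'/F'$, it maps a flag through $F,G$ to a flag through $F',G'$ in the same orbit.

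\emph{Part (b), conclusion.} For $k\in\bar I$, the flags $\Phi$ and $\Phi^k$ lie in different orbits. So the sections $\Phi_j/\Phi_i$ and $\Phi^k_j/\Phi^k_i$ are sent to different flag orbits by the invariant map, and hence cannot lie in the same section orbit. Combined with the bound of at most two orbits, this gives exactly two orbits. Every section is represented by a flag in the orbit of $\Phi$ or of $\Phi^k$, so it is equivalent to one of these two sections; this holds for any choice of $k\in\bar I$ and any $k$-adjacent pair.

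\emph{Degenerate cases.} When $i\in\{-1\}$ or $j=n$, or when $i=j$, we only need $t(\Omega)$ computed correctly, and the argument is unchanged. The main obstacle is well-definedness of the orbit map in (b), and Lemma~\ref{omegaorb} handles exactly that.
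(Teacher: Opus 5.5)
Your proof is correct and follows the paper's argument. Part (a) is the same as the paper's. In part (b) you use Lemma~\ref{omegaorb} to attach a $\Gamma(\mathcal{P})$-invariant flag-orbit label to each section, whereas the paper argues by contradiction, joining $\Phi\gamma$ to $\Phi^k$ through flags that are adjacent only at ranks in $I$. That chain argument is exactly the content of Lemma~\ref{omegaorb}.
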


\begin{proof}
First, let $\bar{I} \not \subseteq \{i,j\}$. Choose an element $k\notin I$ such that $k\neq i,j$. Then, by the definition of the class $2_I$, a flag and its $k$-adjacent flag are in distinct orbits. If $\Phi$ is any flag of $\mathcal{P}$, then $\Phi^k$ shares the same $i$-face and $j$-face with $\Phi$, so in particular, 
\[\Phi_j / \Phi_i =\Phi^k_j / \Phi^k_i.\]
Now if $\Psi$ is any flag of $\mathcal{P}$, then $\Psi$ must be equivalent to $\Phi$ or $\Phi^k$ under $\Gamma(\mathcal{P})$. 
Hence if $\gamma \in \Gamma(\mathcal{P})$ is such that $\Psi=\Phi\gamma$ or $\Psi=\Phi^k\gamma$, then either way, $\Psi_j/\Psi_{i}=(\Phi_j/ \Phi_i)\gamma$.  This proves the first part of the lemma, as each section in $\mathcal{S}_{i,j}(\mathcal{P})$ is of the form $\Psi_j/\Psi_i$ for some flag $\Psi$ of $\mathcal{P}$. 

Now let $\bar{I} \subseteq \{i,j\}$. Choose again an element $k\notin I$; then necessarily $k=i$ or $k=~j$. If $\Phi$ is any flag of $\mathcal{P}$, then every section in $\mathcal{S}_{i,j}(\mathcal{P})$ is equivalent under $\Gamma(\mathcal{P})$ to either $\Phi_j / \Phi_i$ or $\Phi^k_j / \Phi^k_i$. However, now the latter two sections are not equivalent under $\Gamma(\mathcal{P})$, as can be seen as follows. Suppose to the contrary that there exists an element $\gamma\in\Gamma(\mathcal{P})$  such that $(\Phi_j / \Phi_i)\gamma=\Phi^k_j / \Phi^k_i$. Then the flags $\Psi:=\Phi\gamma$ and $\Phi$ are in the same orbit and 
\[\Psi_{i}=(\Phi\gamma)_{i} = (\Phi_i)\gamma=\Phi^k_i,\quad 
\Psi_{j}=(\Phi\gamma)_{j}=(\Phi_j)\gamma=\Phi^k_j.\] 
Thus the flags $\Psi$ and $\Phi^k$ have the same $i$-faces and the same $j$-faces. Hence, by the strong flag-connectedness, $\Psi$ and $\Phi^k$ can be joined by a sequence of successively adjacent flags, where at each step in the sequence the adjacency occurs at a rank different from $i$ and $j$. Since $\bar{I}\subseteq\{i,j\}$ and thus each pair of successively adjacent flags in the sequence lies in the same orbit under $\Gamma(\mathcal{P})$, it follows that $\Psi$ and $\Phi^k$ also belong to the same orbit. Thus $\Phi$ and $\Phi^k$ lie in the same flag orbit under $\Gamma(\mathcal{P})$. Hence $k\in I$, by the definition of $I$; this is a contradiction to our choice of $k$. It follows that there are exactly two orbits on $\mathcal{S}_{i,j}(\mathcal{P})$ when $\bar{I}\subseteq \{i,j\}$. This proves the second part of the lemma.
\end{proof}
\medskip

Schl\"afli symbols are a convenient way of encoding the local combinatorial structure in  regular, chiral, or more generally, equivelar polytopes. Recall that an $n$-polytope $\mathcal{P}$ is said to be \textit{equivelar of type $\{p_1,\ldots,p_{n-1}\}$}, or simply, to be \textit{of type $\{p_1,\ldots,p_{n-1}\}$}, if for each $i=1,\ldots,n-1$, every 2-section in $\mathcal{S}_{i-2,i+1}(\mathcal{P})$ is isomorphic to a $p_i$-gon if $p_i<\infty$ or to an apeirogon if $p_{i}=\infty$. As the example of the cuboctahedron shows, two-orbit polytopes may not be equivelar and may not have a standard (one-row) Schl\"afli symbol. In general, two-orbit polytopes require a two-row Schl\"afli symbol to encode the local combinatorial structure.

More precisely, if $\mathcal{P}$ is a two-orbit $n$-polytope, we can associate with $\mathcal{P}$ a {\em double\/} (or {\em two-row\/}) {\em Schl\"afli symbol\/}, 
\begin{equation}
\label{doubleSch1}
\Big\{ 
\begin{matrix} 
p_1 &\! p_2 &\!\ldots &\! p_{n-1} \\
q_1 &\! q_2 &\! \ldots &\! q_{n-1}
\end{matrix} 
\Big\},
\end{equation}
which is unique up to interchanging the rows. Let $\mathcal{O}$ and $\mathcal{O}'$ be the two flag orbits of $\mathcal{P}$. If $\Phi$ is  flag, the number of $i$-faces of $\mathcal{P}$ in a 2-section $\Phi_{i+1}/ \Phi_{i-2}$ clearly only depends on the orbit of a flag $\Phi$.  We denote this number by $p_{i}(\mathcal{O})$ and $q_{i}(\mathcal{O})$ according as $\Phi$ lies in $\mathcal{O}$ or $\mathcal{O}'$. Then the double Schl\"afli symbol is a $2\times (n-1)$ array recording the numbers $p_i:=p_i(\mathcal{O})$ in the first row and the numbers $q_i:=q_i(\mathcal{O})$ in the second row, as indicated above. Note that $p_i(\mathcal{O}')= q_i(\mathcal{O})$ and $q_i(\mathcal{O}')=p_i(\mathcal{O})$. Thus interchanging the orbits $\mathcal{O}$ and $\mathcal{O}'$ results in interchanging the rows of the symbol. Symbols obtained from each other by switching the two rows represent two equivalent ways of describing the local structure of $\mathcal{P}$ around flags.

The two rows of the double Schl\"afli symbol frequently coincide. This occurs if and only if $\mathcal{P}$ is equivelar. In this case we usually reduce the symbol to the standard one-row Schl\"afli symbol $\{p_1, \dots p_{n-1}\}$.  For example, equivelarity occurs whenever  either ${\rm rd}(I)\geq 3$ or ${\rm rd}(I)=2$ and $\bar{I} = \{ j, k\}$ for some $j,k$ with $|j-k| \neq 3$. In fact, under these assumptions on~$I$, any two comparable 2-sections are isomorphic by Theorem~\ref{sections}, and therefore $q_i=p_i$ for each $i$. Thus the double Schl\"afli symbol can only have distinct rows when either ${\rm rd}(I)=3$ or ${\rm rd}(I)=2$ and $\bar{I} = \{j, k\}$ for some $j,k$ with $|j-k|=3$.

On the other hand, again by Theorem~\ref{sections}, if $\bar{I}=\{j,k\}$ with $k=j+3$, then still $q_i=p_i$ for each $i$ with $i \neq j+2=k-1$, but now $q_{j+2}$ and $p_{j+2}$ need not be the same. In this case we often replace the full two-row symbol in (\ref{doubleSch1}) by the simpler symbol
\begin{equation}
\label{doubleSch2}
\Big\{p_1,\dots,p_{j+1}, 
\begin{matrix} p_{j+2} \\ q_{j+2} \end{matrix}, 
p_{j+3}, \dots p_{n-1}\Big\}.
\end{equation}
The new symbol is unique up to interchanging $p_{j+2}$ and $q_{j+2}$. In practice we often place the smaller of the two integers $p_{j+2}$ and $q_{j+2}$ in the top row.

Similarly, once again by Theorem~\ref{sections}, if ${\rm rd}(I)=1$ and $\bar{I}=\{j\}$, then $q_i=p_i$ for each $i \neq j-1, j+2$, but when $i=j-1$ or $j+2$ the numbers $q_i$ and $p_i$ may be different. In this case we often use the simpler symbol
\begin{equation}
\label{doubleSch3}
\Big\{p_1, \dots, p_{j-2}, \begin{matrix} p_{j-1} \\ q_{j-1} \end{matrix}, p_{j}, p_{j+1},\begin{matrix} p_{j+2} \\ q_{j+2} \end{matrix}, p_{j+3}, \dots p_{n-1}\Big\}
\end{equation}
in place of the full two-row symbol. This symbol is unique up to interchanging, simultaneously, $p_{j-1}$ with $q_{j-1}$, and $p_{j+2}$ with $q_{j+2}$. Again the symbol with the smallest entries in the top row is usually preferred.

In either of the two scenarios just described we still refer to the new symbol as a double Schl\"afli symbol for $\mathcal{P}$.

For example, the double Schl\"afli symbols of the cuboctahedron and icosidodecahedron, which are two-orbit polyhedra in the class $2_{\{0,1\}}$, are given by 
\[\Big\{\!\!\!\!
\begin{array}{rl}
\begin{array}{l} 3\\ 4\end{array}&\!\!\!\!4
\end{array}
\!\!\Big\},
\;\;
\Big\{\!\!\!\!
\begin{array}{rl}
\begin{array}{l} 3\\ 5\end{array}&\!\!\!\!4
\end{array}
\!\!\Big\},\] 
respectively. Their duals, the rhombic dodecahedron and rhombic triacontahedron, belong to the class $2_{\{0,1\}}$ and have double Schl\"afli symbols
\[\Big\{\!\!\!\!
\begin{array}{rl}
\;4&\!\!\!\!\begin{array}{l} 3\\ 4\end{array}\!\!
\end{array}
\!\!\Big\},
\;\;\,
\Big\{\!\!\!\!
\begin{array}{rl}
\;4&\!\!\!\!\begin{array}{l} 3\\ 5\end{array}\!\!
\end{array}
\!\!\Big\}.\] 

\section{The group of a two-orbit polytope}
\label{grour}

In this section we establish structure results for the automorphism groups of two-orbit polytopes following the blueprint for similar approaches for regular and chiral polytopes.

To this end, throughout this section, $\mathcal{P}$ shall be a two-orbit $n$-polytope in the class $2_I$, $I\subset N=\{0,\ldots,n-1\}$, with double Schl\"afli symbol 
$$\Big\{ 
\begin{matrix} 
p_1 &\! p_2 &\! \dots &\! p_{n-1} \\
q_1 &\! q_2 &\! \dots &\! q_{n-1}
\end{matrix} \Big\}.$$
The Schl\"afli symbol of $\mathcal{P}$ is uniquely determined up to interchanging the top and bottom rows. In our subsequent discussion we usually choose the symbol whose top row aligns with the orbit of a specified flag of $\mathcal{P}$. 

\subsection{Generators}
\label{gensection}

We begin by investigating generators for the automorphism group. Let $\mathcal{P}$ be an $n$-polytope in the class $2_I$, with $I\subset N$, and let $\Phi$ be a fixed, or {\em base\/}, flag of $\mathcal{P}$. We assume that the top row of the Schl\"afli symbol corresponds to the flag orbit that contains $\Phi$.

The automorphism group $\Gamma(\mathcal{P})$ of $\mathcal{P}$ has a natural system of generators obtained as follows. As we saw in the previous section, for all $i \in I$ and $j,k \notin I$, the flags $\Phi^i$, $\Phi^{j,k}$ and $\Phi^{j,i,j}$ all lie in the same flag orbit as $\Phi$, so there exist (unique) elements $\rho_i$, $\alpha_{j,k}$ and $\alpha_{j,i,j}$ in $\Gamma(\mathcal{P})$ 
such that
\begin{equation}
\label{phiaction}
\Phi\rho_{i}=\Phi^{i},\;\; \Phi\alpha_{j,k}=\Phi^{j,k},\;\; \Phi\alpha_{j,i,j}=\Phi^{j,i,j} \quad (i\in I,\ \! j,k\notin I).
\end{equation}
Note that $\alpha_{j,k}=1$ if $j=k$. Relative to the $j$-adjacent flag $\Phi^j$ of $\Phi$, which lies in the flag orbit not containing $\Phi$, a typical element $\alpha_{j,i,j}$ acts like the element $\rho_i$ relative to $\Phi$. More exactly, if $i\in I$ and $j\notin I$, then 
\[ (\Phi^{j})\alpha_{j,i,j} = (\Phi\alpha_{j,i,j})^{j} = (\Phi^{j,i,j})^{j}
= \Phi^{j,i,j,j} = \Phi^{j,i} = (\Phi^{j})^{i}\]
and hence $\alpha_{j,i,j}$ maps the flag $\Phi^{j}$ to its $i$-adjacent flag $(\Phi^{j})^{i}$. In fact, if $j\notin I$, the relationship between the two sets $\{\rho_i\,|\,i\in I\}$ and $\{\alpha_{j,i,j}\,|\,i\in I\}$ is fully symmetric, in that $\{\rho_i\,|\, i\in I\}$ is to the base flag $\Phi$ what $\{\alpha_{j,i,j}\,|\, i\in I\}$ is to its $j$-adjacent flag $\Phi^j$; and also, in that $\{\rho_i\,|\, i\in I\}$ is to $\Phi^j$ what $\{\alpha_{j,i,j}\,|\, i\in I\}$ is to $\Phi$. 

Our first theorem says that  
\begin{eqnarray}
\label{gi}
\mathcal{G}_{I} :=\mathcal{G}_{I}(\Phi):= 
\{ \rho_i \,|\, i \in I\} \,\cup\, \{\alpha_{j,k}\,|\, j,k \notin I   \} \,\cup\, \{ \alpha_{j,i,j} \,|\, i \in I, j \notin I \}
\end{eqnarray}
is a set of generators of $\Gamma(\mathcal{P})$. We usually suppress the reference to $\Phi$ in the notation if no confusion is possible.
\smallskip

Before proceeding with the theorem itself, it is instructive to observe the effect, on flags, of multiplying an automorphism $\alpha\in\Gamma(\mathcal{P})$ on the left by an element $\gamma\in\mathcal{G}_{I}$ of the form $\rho_i$, $\alpha_{j,k}$, or $\alpha_{j,i,j}$. We already made a similar observation for $\rho_i$ at the end of Section~\ref{bano}.

Recall from (\ref{phiaction}) that the three kinds of elements of $\mathcal{G}_{I}$ are defined by specific actions on the base flag $\Phi$. Abusing notation for a moment, for $\gamma\in\mathcal{G}_{I}$, let us write $\Phi\gamma=\Phi^{s(\gamma)}$, where $s(\gamma)$ is the $1$-, $2$-, or $3$-element sequence of superscripts $i$, $jk$, or $jij$, according as $\gamma$ is $\rho_i$, $\alpha_{j,k}$, or $\alpha_{j,i,j}$. Now suppose $\alpha\in\Gamma(\mathcal{P})$ and $\Phi\alpha=\Phi^t$ for some sequence of superscripts~$t$. Then we claim that 
\begin{equation}
\label{append}
\Phi (\gamma\alpha) = (\Phi\alpha)^{s(\gamma)} = \Phi^{t,s(\gamma)}.
\end{equation}
This follows immediately from (\ref{phiaction}) and the fact that automorphisms of polytopes preserve $l$-adjacency of flags for each $l\in N$. In fact, by slight abuse of notation, 
\[ \Phi (\gamma\alpha) = (\Phi\gamma)\alpha = \Phi^{s(\gamma)}\alpha
=(\Phi\alpha)^{s(\gamma)} = (\Phi^{t})^{s(\gamma)} = \Phi^{t,s(\gamma)}.\]
Thus multiplying $\alpha$ {\it on the left} by $\gamma$ translates into appending the sequence of superscripts $s(\gamma)$ for $\gamma$ {\it on the right\/}, to the sequence of superscripts $t$ for $\alpha$.
\smallskip

\begin{theorem}
\label{generators}
Let $\mathcal{P}$ be a two-orbit polytope in the class $2_I$, with $I\subset N$, and let $\mathcal{G}_{I}$ be as in (\ref{gi}). Then $\Gamma(\mathcal{P})$ is generated by $\mathcal{G}_{I}$.
\end{theorem}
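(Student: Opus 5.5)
Let $\gamma\in\Gamma(\mathcal{P})$ be arbitrary. The plan is to show that $\gamma$ is a product of elements of $\mathcal{G}_I$, using the flag-sequence bookkeeping of (\ref{append}) together with the fact that an automorphism is determined by its action on the base flag $\Phi$. Since $\Phi\gamma$ lies in the orbit of $\Phi$, strong flag-connectedness lets us write $\Phi\gamma=\Phi^{t}$ for some sequence $t=(i_1,\ldots,i_l)$ of superscripts. Because $\Phi^t$ is in the same orbit as $\Phi$, the number of entries of $t$ not in $I$ is even. The goal is to cut $t$ into consecutive blocks $s(\gamma_1),s(\gamma_2),\ldots,s(\gamma_m)$ with each $\gamma_r\in\mathcal{G}_I$. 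Then (\ref{append}), applied repeatedly from the right-hand end, gives $\Phi(\gamma_m\cdots\gamma_1)=\Phi^{t}$, and so $\gamma=\gamma_m\cdots\gamma_1$.

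The decomposition of $t$ is read off from left to right. We distinguish two states: ``in the orbit of $\Phi$'' (an even number of non-$I$ letters read so far) and ``in the other orbit'' (an odd number).
\begin{itemize}
\item In the even state, a letter $i\in I$ forms the block $s(\rho_i)=i$.
\item In the even state, a letter $j\notin I$ opens a block. We then read any letters $i_1,\dots,i_r\in I$ that follow, up to the next non-$I$ letter $k$. The stretch $j\,i_1\cdots i_r\,k$ is rewritten as $(j\,i_1\,j)(j\,i_2\,j)\cdots(j\,i_r\,j)(j\,k)$.
\end{itemize}
This rewriting is legitimate because $\Phi^{\ldots,j,j,\ldots}=\Phi^{\ldots,\ldots}$, so inserting the pairs $j,j$ does not change the resulting flag. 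Each new block is $s(\alpha_{j,i,j})$ or $s(\alpha_{j,k})$. Since the total number of non-$I$ letters is even, every opened $j$ is eventually closed by some $k$, and no leftover block remains.

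The one point needing care is the order of multiplication. By (\ref{append}), appending the blocks on the right corresponds to multiplying on the left, so the first block of $t$ becomes the rightmost factor. With this convention the induction on the number of blocks is immediate: if $\Phi\beta=\Phi^{t'}$ and $\delta\in\mathcal{G}_I$, then $\Phi(\delta\beta)=\Phi^{t',s(\delta)}$.

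The main (modest) obstacle is justifying that the inserted pairs $j,j$ do not affect the flag. This holds because $(\Psi^j)^j=\Psi$ for every flag $\Psi$, so the equality of flags holds symbolically at each step. It also requires checking that each intermediate flag reached at a block boundary lies in the orbit of $\Phi$, which the parity count guarantees. Uniqueness of automorphisms given their image of $\Phi$ then concludes $\gamma\in\langle\mathcal{G}_I\rangle$.
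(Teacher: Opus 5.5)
Your proposal is correct and is essentially the paper's proof. The paper also pairs the non-$I$ superscripts consecutively (first with second, third with fourth, and so on) and encodes each stretch $j\,i_1\cdots i_r\,k$ by $\beta_q=\alpha_{j,k}\alpha_{j,i_r,j}\cdots\alpha_{j,i_1,j}$, with the $I$-letters between pairs encoded by $\rho_i$'s, and assembles $\widehat{\psi}=\gamma_{2s}\beta_{2s-1}\cdots\gamma_2\beta_1\gamma_0$ with the first block as the rightmost factor, exactly as you do via (\ref{append}). Your extra parity check at block boundaries is harmless but not needed, since (\ref{append}) holds for arbitrary superscript sequences.
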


\begin{proof}
As above, let $\Phi$ be the base flag of $\mathcal{P}$. Our goal is to write an arbitrary element $\psi$ of $\Gamma(\mathcal{P})$ in terms of the elements of $\mathcal{G}_{I}$. 

Consider the flag $\Psi:=\Phi\psi$, which belongs to the same flag orbit as $\Phi$. By the strong flag connectedness of~$\mathcal{P}$, there exists a sequence of successively adjacent flags 
\begin{eqnarray}
\label{adjacentsequence1}
\Phi, \Phi^{i_1}, \Phi^{i_1, i_2}, \dots, \Phi^{i_1, i_2, \dots, i_l}=\Psi,
\end{eqnarray}
all containing $\Phi \cap \Psi$, joining $\Phi$ and $\Psi$.  For the purpose of this proof define $\Phi^{(0)}:=\Phi$ and $\Phi^{(m)}:= \Phi^{i_1,\dots, i_m}$ for $m=1,\ldots,l$. Our goal is to exploit the structure of the sequence in (\ref{adjacentsequence1}) to produce a factorization of $\psi$ into elements from $\mathcal{G}_{I}$.
 
Since $\Phi$ and $\Psi$ are in the same orbit, the number of superscripts $i_k$ that are not contained in~$I$ must be even, equal to $2s$ (say). Let $i_{k_1}, \ldots, i_{k_{2s}}$ denote the terms in the sequence $i_1, i_2, \ldots, i_l$ that are not contained in~$I$, and set $k_0:=0$ and $k_{2s+1}:=l+1$. Then~(\ref{adjacentsequence1}) takes the form
\begin{eqnarray}
\label{seq2}
\Phi=\Phi^{(k_0)},  \Phi^{(1)}, \dots,  \Phi^{(k_1)},\ldots, \Phi^{(k_2)}, \ldots\ldots, \Phi^{(k_{2s-1})}, \ldots, \Phi^{(k_{2s})}, \ldots, \Phi^{(l)} = \Psi.
\end{eqnarray}
Here the flags $\Phi^{(1)}, \ldots, \Phi^{(k_1-1)}$ at the beginning of the sequence are in the same orbit as~$\Phi$, since all of the original superscripts involved lie in $I$. However, starting with $q=1$, each time we encounter a term of the form $\Phi^{(k_q)}$ as we move along the sequence in (\ref{seq2}), we change from one flag orbit to the other flag orbit. On the other hand, no change of flag orbit occurs at the other terms in (\ref{seq2}).

Now for $q=1, \dots, 2s-1$ define 
\begin{eqnarray*}
\beta_q&\!:=\!&\alpha_{i_{k_q},i_{k_{q+1}}} \alpha_{i_{k_q}, i_{k_{q+1}-1}, i_{k_q}} \alpha_{i_{k_q}, i_{k_{q+1}-2}, i_{k_q}} \ldots\ldots \alpha_{i_{k_q}, i_{k_q+1} i_{k_q}}, \\
\gamma_q&\!:=\!& \rho_{i_{k_{q+1}-1}} \rho_{i_{k_{q+1}-2}} \ldots\ldots \rho_{i_{k_q+1}};
\end{eqnarray*}
here, if $k_q$ and $k_{q+1}$ are consecutive integers then $\beta_{q}=\alpha_{i_{k_q},i_{k_{q+1}}}$ and $\gamma_{q}=1$. We also set
\begin{eqnarray*}
\gamma_{0}   &\!:=\!& \rho_{i_{k_{1}-1}} \rho_{i_{k_{1}-2}} \ldots \rho_{i_1},\\
\gamma_{2s} &\!:=\!& \rho_{i_l} \rho_{i_{l-1}} \dots \rho_{i_{k_{2s}+1}} ,
\end{eqnarray*}
so in particular, $\gamma_{0}=1$ if $k_{1}=1$, and $\gamma_{2s}=1$ if $k_{2s}=l$.
Note that for each $q$, each factor occurring in the expressions for $\beta_q$ or $\gamma_q$ is an element of $\mathcal{G}_{I}$. 

Next we show that the action of the generators of $\mathcal{G}_{I}$ on $\Phi$ gives
\begin{eqnarray*}
\Phi \beta_q      &\!=\!& \Phi^{i_{k_{q}}, i_{k_q+1}, \dots, i_{k_{q+1}-1} , i_{k_{q+1}}} \;\,(q=1,\ldots,2s-1), \\ 
\Phi \gamma_q &\!=\!& \Phi^{i_{k_q+1}, i_{k_q+2}, \dots, i_{k_{q+1}-1}} \;\,(q=0,\ldots,2s).
\end{eqnarray*}
Here the right hand side of the second equation is to be read as $\Phi$ if either $q=0$ and $k_{1}=1$, or $q=2s$ and $k_{2s}=l$. Then the equation for $\Phi\beta_q $ can be established as follows:
\[\begin{array}{lll}
\Phi \beta_q 
&=& \Phi \alpha_{i_{k_q},i_{k_{q+1}}} \alpha_{i_{k_q}, i_{k_{q+1}-1}, i_{k_q}} \alpha_{i_{k_q}, i_{k_{q+1}-2}, i_{k_q}} \ldots\ldots \alpha_{i_{k_q}, i_{k_q+1} i_{k_q}}\\[.05in]
&=& (\Phi^{i_{k_q},i_{k_{q+1}}}) \alpha_{i_{k_q}, i_{k_{q+1}-1}, i_{k_q}} \alpha_{i_{k_q}, i_{k_{q+1}-2}, i_{k_q}} \ldots\ldots \alpha_{i_{k_q}, i_{k_q+1} i_{k_q}}\\[.05in]
&=& (\Phi \alpha_{i_{k_q}, i_{k_{q+1}-1}, i_{k_q}} \alpha_{i_{k_q}, i_{k_{q+1}-2}, i_{k_q}} \ldots\ldots \alpha_{i_{k_q}, i_{k_q+1} i_{k_q}})^{i_{k_q},i_{k_{q+1}}}\\[.05in]
&=& ((\Phi^{i_{k_q}, i_{k_{q+1}-1}, i_{k_q}}) \alpha_{i_{k_q}, i_{k_{q+1}-2}, i_{k_q}} \ldots\ldots \alpha_{i_{k_q}, i_{k_q+1} i_{k_q}})^{i_{k_q},i_{k_{q+1}}}\\[.05in]
&=& ((\Phi \alpha_{i_{k_q}, i_{k_{q+1}-2}, i_{k_q}} \ldots\ldots \alpha_{i_{k_q}, i_{k_q+1} i_{k_q}})^{i_{k_q}, i_{k_{q+1}-1}, i_{k_q}})^{i_{k_q},i_{k_{q+1}}}\\[.05in]
&=& (\Phi \alpha_{i_{k_q}, i_{k_{q+1}-2}, i_{k_q}} \ldots\ldots \alpha_{i_{k_q}, i_{k_q+1} i_{k_q}})^{i_{k_q}, i_{k_{q+1}-1},i_{k_{q+1}}}\\[.05in]
&=& (\Phi \alpha_{i_{k_q}, i_{k_{q+1}-3}, i_{k_q}} \ldots\ldots \alpha_{i_{k_q}, i_{k_q+1} i_{k_q}})^{i_{k_q},i_{k_{q+1}-2},i_{k_{q+1}-1},i_{k_{q+1}}}\\[.02in]
&\vdots&\\[.02in]
&=& \Phi^{i_{k_{q}}, i_{k_q+1}, \dots, i_{k_{q+1}-1} , i_{k_{q+1}}}.
\end{array} \]
The equation for $\Phi \gamma_q$ is more straightforward and follows similarly.

Now define the element $\widehat{\psi}$ of the subgroup $\langle\mathcal{G}_{I}\rangle$ of $\Gamma(\mathcal{P})$ by
\begin{eqnarray}
\label{star}
\widehat{\psi}: =\gamma_{2s} \beta_{2s-1} \gamma_{2s-2} \beta_{2s-3} \ldots\ldots \gamma_4 \beta_3 \gamma_2 \beta_1 \gamma_{0}.
\end{eqnarray}
Then the image of $\Phi$ under $\widehat{\psi}$ is given by
\[ \begin{array}{lll}
\Phi\widehat{\psi} 
&=& (\Phi\gamma_{2s})\,\beta_{2s-1}\gamma_{s-2}\beta_{2s-2}\gamma_{2s-3}\ldots\ldots\beta_{1}\gamma_{0}\\[.05in]
&=& (\Phi^{\,i_{k_{2s}+1},i_{k_{2s}+2},\ldots,i_l})\,\beta_{2s-1}\gamma_{s-2}\beta_{2s-2}\gamma_{2s-3}\ldots\ldots\beta_{1}\gamma_{0}\\[.05in]
&=& (\Phi\beta_{2s-1}\gamma_{s-2}\beta_{2s-2}\gamma_{2s-3}\ldots\ldots\beta_{1}\gamma_{0})^{\,i_{k_{2s}+1},i_{k_{2s}+2},\ldots,i_l}\\[.05in]
&=& ((\Phi\beta_{2s-1})\gamma_{s-2}\beta_{2s-2}\gamma_{2s-3}\ldots\ldots\beta_{1}\gamma_{0})^{\,i_{k_{2s}+1},i_{k_{2s}+2},\ldots,i_l}\\[.05in]
&=& ((\Phi^{i_{k_{2s-1}}, i_{k_{2s-1}+1}, \dots, i_{k_{2s}-1} , i_{k_{2s}}})\gamma_{s-2}\beta_{2s-2}\gamma_{2s-3}\ldots\ldots\beta_{1}\gamma_{0})^{\,i_{k_{2s}+1},i_{k_{2s}+2},\ldots,i_l}\\[.05in]
&=& (\Phi\gamma_{s-2}\beta_{2s-2}\gamma_{2s-3}\ldots\ldots\beta_{1}\gamma_{0})^{\,{i_{k_{2s-1}}, i_{k_{2s-1}+1}, \ldots, i_{k_{2s}-1},i_{k_{2s}}},i_{k_{2s}+1},i_{k_{2s}+2},\ldots,i_l}\\[.02in]
&\vdots&\\[.02in]
&=&\, \Phi^{i_1,\ldots,i_l} \,=\, \Psi \,=\, \Phi\psi.
\end{array} \]

Therefore, since the images of $\Phi$ under $\psi$ and $\widehat {\psi}$ coincide, we must have $\psi=\widehat{\psi}\in\langle\mathcal{G}_{I}\rangle$. This completes the proof.
\end{proof}
\smallskip

The elements of $\mathcal{G}_{I}$ are called the {\em distinguished generators} of $\Gamma(\mathcal{P})$ {\em with respect to\/}~$\Phi$. If there is no possibility of confusion, we omit the reference to the base flag and simply refer to the  elements of $\mathcal{G}_{I}$ as the distinguished generators of $\Gamma(\mathcal{P})$. There are three kinds of distinguished generators, to some extent overlapping. 

The generators of the {\it first\/} kind, $\rho_i$ ($i \in I$), and the generators of the {\it third\/} kind, $\alpha_{j,i,j}$ ($i \in I$, $j \notin I$), are involutions. This is clear for the generators $\rho_i$. The generators $\alpha_{j,i,j}$ are associated with the flag $\Phi^j$ in the same way as the generators $\rho_i$ are with $\Phi$, so these must be involutions as well. More explicitly, 
\[ (\Phi\alpha_{j,i,j})\alpha_{j,i,j}=(\Phi^{j,i,j})\alpha_{j,i,j}
=(\Phi\alpha_{j,i,j})^{j,i,j}=(\Phi^{j,i,j})^{j,i,j} = \Phi^{j,i,j,j,i,j}=\Phi,\]
showing that $\alpha_{j,i,j}^{\,2}=1$. Moreover,   
\begin{equation}
\label{alphrho}
\alpha_{j,i,j} = \rho_i\,\mbox{ if } | j - i | \geq 2,
\end{equation}
since in this case $\Phi^{j,i,j}=\Phi^i$. 

For the generators of the {\it second\/} kind, $\alpha_{j,k}$ ($j, k\notin I$), we have 
\begin{equation} 
\label{invalphajk}
\alpha^{-1}_{j,k}= \alpha_{k,j},
\end{equation}
since $\Phi^{k,j,j,k}=\Phi$ gives $\alpha_{j,k}\alpha_{k,j}=1$. We claim that $\alpha_{j,k}$ has period $p_{jk}$, where 
\[p_{jk}=
\left\{\begin{array}{ll}
2   &\mbox{if } |j-k|\geq 2,\\
p_k&\mbox{if } j=k-1,\\
p_j&\mbox{if } k=j-1.
\end{array}
\right.\]
Here $p_l$ is the $l^{th}$ entry in the top row of the double Schl\"afli symbol of $\mathcal{P}$, which by assumption is aligned with the orbit of $\Phi$. To see that $\alpha_{j,k}$ is an involution if $|j-k| \geq 2$, observe that $\Phi^{j,k}=\Phi^{k,j}$ in this case and thus $\alpha_{j,k}=\alpha_{k,j}$, giving $\alpha_{j,k}^{2}=1$. If $j=k-1$, the element $\alpha_{j,k}=\alpha_{k-1,k}$ fixes each face in $\Phi$ except the $(k-1)$-face and $k$-face, and cyclically permutes, or shifts one step along, the $p_k$ $k$-faces in the 2-section $\Phi_{k+1}/\Phi_{k-2}$ of $\mathcal{P}$ according as $p_k$ is finite or infinite. If $k=j-1$, then $\alpha_{j,k}$ has the same order as $\alpha_{k,j}=\alpha_{j-1,j}$, which is $p_j$.

Thus
\begin{eqnarray}
\label{rel0}
\rho_i^2=\alpha_{j,i,j}^2 = \alpha^{p_{jk}}_{j,k} = \alpha_{j,k}\alpha_{k,j}  = 1 \quad (i\in I,\ \! j,k\notin I).
\end{eqnarray}
Moreover, we also have the relations
\begin{eqnarray}
\label{relations2}
(\rho_{i}\rho_{l})^{p_{il}} = (\alpha_{j,i,j} \alpha_{j,l,j})^{q_{il}} = 1 \quad (i,l\in I,\ \! j \notin I),
\end{eqnarray}
where $p_{il}:=p_{li}:=q_{il}:=q_{li}:=2$ if $|i-l | \geq 2$, and $p_{il}:=p_{li}:=p_l$ and $q_{il}:=q_{li}:=q_l$ if $i=l-1$. These relations can again be verified by evaluating the pertinent elements on $\Phi$. For example, we have 
\[ (\Phi\alpha_{j,i,j})\alpha_{j,l,j}=(\Phi^{j,i,j})\alpha_{j,l,j}
=(\Phi\alpha_{j,l,j})^{j,i,j}=(\Phi^{j,l,j})^{j,i,j} = \Phi^{j,l,j,j,i,j}=\Phi^{j,l,i,j},\]
so if $|i - l | \geq 2$ then $\Phi^{j,l,i,j}=\Phi^{j,i,l,j}$ and hence $\alpha_{j,i,j}\alpha_{j,l,j}$ has period $2$; and if $i=l-1$ then $\alpha_{j,i,j}\alpha_{j,l,j}$ fixes every face of the $j$-adjacent flag $\Phi^j$ of $\Phi$ except the $(l-1)$-face and the $l$-face, and cyclically permutes the $q_l$ $l$-faces in the 2-section $\Phi^{j}_{l+1}/\Phi^{j}_{l-2}$ of $\mathcal{P}$ (note here that $\Phi^j$ and $\Phi$ are in distinct flag orbits since $j\notin I$).

There are further relationships between the generators of $\Gamma(\mathcal{P})$. In particular, 
\begin{eqnarray}
\label{rel0more}
\alpha_{j,k} \alpha_{j,i,j} \alpha_{k,j} = \alpha_{k,i,k} \quad (i\in I,\ j,k\notin I).
\end{eqnarray}
These relations can again be obtained by evaluating the elements on both sides on the base flag $\Phi$ and then observing that the results coincide:
\[\Phi\alpha_{j,k} \alpha_{j,i,j} \alpha_{k,j} = \Phi^{k,j,j,i,j,j,k}=\Phi^{k,i,k}=\Phi\alpha_{k,i,k}.\]

We note two relations that can be derived as special cases of (\ref{rel0more}). First, if $i\in I$ and $j,i+1\notin I$ such that $|j-i| \geq 2$, then
\begin{eqnarray}
\label{rel1}
\alpha_{j,i+1} \rho_{i} \alpha_{i+1,j} = \alpha_{i+1,i,i+1}.
\end{eqnarray}
In fact, $\rho_{i}=\alpha_{j,i,j}$ in this case, so (\ref{rel1}) follows from (\ref{rel0more}) with $k=i+1$. Second, if $i \in I$ and $j-1,j \notin I$ such that either $i < j-2$ or $i > j+1$, then
\begin{eqnarray}
\label{rel2}
\alpha_{j-1,j} \rho_i = \rho_i \alpha_{j-1,j}.
\end{eqnarray}
For the proof, apply (\ref{rel0more}) with $j$ and $k$ replaced by $j-1$ and $j$, respectively, and note that $\rho_{i}=\alpha_{j-1,i,j-1}$.
\smallskip

Observe also that if an entry $p_l$ in the top row of the double Schl\"afli symbol is odd, then either $l-1, l \in I$ or $l-1, l \notin I$. In fact, suppose to the contrary that, for example, $l-1 \in I$ and $l \notin I$. Then the involutions $\rho_{l-1}$ and $\alpha_{l, l-1, l}$ both leave the common faces of $\Phi$ and $\Phi^l$ of ranks distinct from $l-1$ and $l$ invariant, and in particular act on the section $\Phi_{l+1} / \Phi_{l-2}$ of $\mathcal{P}$ of rank 2. On this section, $\rho_{l-1}$ and $\alpha_{l, l-1, l}$ act like reflection symmetries of a $p_l$-gon in the perpendicular bisectors of adjacent edges. Hence, since $p_l$ is odd, the subgroup $\langle \rho_{l-1}, \alpha_{l, l-1, l} \rangle$ of $\Gamma(\mathcal{P})$ is isomorphic to the dihedral group $\mathcal{D}_{p_l}$ and must act flag transitively on the section $\Phi_{l+1} / \Phi_{l-2}$. It follows that this subgroup of $\Gamma(\mathcal{P})$ must also contain an automorphism of $\mathcal{P}$ mapping $\Phi$ to $\Phi^{l}$. Hence $l \in I$, which is a contradiction. 

Thus $p_l$ must be even if exactly one of $l-1$ and $l$ lies in~$I$. In this case arguments similar to those above show that 
\begin{equation}
\label{radih}
\begin{array}{rcl}
\langle\rho_{l-1},\alpha_{l,l-1,l}\rangle\!\! &\cong&\!\! D_{p_{l}/2} \;\mbox{ if } l-1\in I,\, l\notin I, \\[.04in]
\langle\rho_{l},\alpha_{l-1,l,l-1}\rangle\!\! &\cong&\!\! D_{p_{l}/2} \;\mbox{ if } l-1\notin I,\, l\in I .
\end{array}
\end{equation}
\smallskip

The distinguished generators for $\Gamma(\mathcal{P})$ depend on the choice of the base flag $\Phi$ of $\mathcal{P}$. If $\Psi$ is a flag in the same orbit as $\Phi$, then the generating set $\mathcal{G}_{I}(\Psi)$ is conjugate in $\Gamma(\mathcal{P})$ to the generating set $\mathcal{G}_{I}(\Phi)$, and the conjugation is by the element of $\Gamma(\mathcal{P})$ that maps $\Phi$ to $\Psi$. This is no longer true if the two flags are in distinct orbits. If $\Psi$ is in a different flag orbit than $\Phi$, then the generating set $\mathcal{G}_{I}(\Psi)$ is conjugate in $\Gamma(\mathcal{P})$ to the generating set associated with a flag adjacent to $\Phi$ but not in the same flag orbit as $\Phi$. Thus, in order to investigate the generating sets associated with flags of the other orbit it suffices to choose a $j$-adjacent flag of~$\Phi$ with $j\notin I$ as the base flag. 

Now suppose $\Psi$ is a flag adjacent to $\Phi$ but not in the same orbit as $\Phi$. Then $\Psi=\Phi^{j_0}$ for some $j_0 \notin I$ and the distinguished generators 
\[\rho_i',\;\alpha'_{j,k},\; \alpha'_{j,i,j}\quad (i \in I,\,j,k \notin I)\]
in the corresponding generating set $\mathcal{G}_{I}(\Psi)=\mathcal{G}_{I}(\Phi^{j_0})$ are related to those in $\mathcal{G}_{I}(\Phi)$ by the equations
\begin{eqnarray}
\label{changeofgenerators}
\rho'_i = \alpha_{j_0,i,j_0}, \qq \qq \qq \alpha'_{j,k}= \alpha_{k,j_0}\alpha_{j_0,j}, \qq \qq \qq \alpha'_{j,i,j}=\alpha_{j,j_0}\rho_i\alpha_{j_0,j}.
\end{eqnarray}
Once again, these equations can be verified by evaluating both sides of an equation on $\Phi$. The details are as follows.

First, the definition of $\rho_i'$ gives $(\Phi\rho'_i)^{j_0} =(\Phi^{j_0})\rho'_i = (\Phi^{j_0})^i$, hence 
\[ \Phi\rho'_i =\Phi^{j_0,i,j_0} = \Phi\alpha_{j_0,i,j_0}\]
and therefore $\rho'_i = \alpha_{j_0,i,j_0}$.
Similarly, for $\alpha'_{j,k}$ the defining property gives 
$(\Phi\alpha'_{j,k})^{j_0} = (\Phi^{j_0})\alpha'_{j,k} = (\Phi^{j_0})^{j,k}$,
hence 
\[ \Phi\alpha'_{j,k} =\Phi^{j_0,j,k,j_0} = (\Phi\alpha_{j_0,j})^{k,j_0} 
= (\Phi^{k,j_0})\alpha_{j_0,j} = \Phi\alpha_{k,j_0}\alpha_{j_0,j}\]
and therefore $\alpha'_{j,k} = \alpha_{k,j_0}\alpha_{j_0,j}$. Finally, from the definition of $\alpha'_{j,i,j}$ we obtain  
$(\Phi\alpha'_{j,i,j})^{j_0} =(\Phi^{j_0})\alpha'_{j,i,j} =(\Phi^{j_0})^{j,i,j}$,
hence  
\[\begin{array}{ll}
\Phi\alpha'_{j,i,j} = \Phi^{j_0,j,i,j,j_0} &= (\Phi\alpha_{j_0,j})^{i,j,j_0} \\[.03in]
&= (\Phi^{i,j,j_0})\alpha_{j_0,j} \\[.03in]
&= ((\Phi\rho_{i})^{j,j_0})\alpha_{j_0,j} = (\Phi^{j,j_0})\rho_{i}\alpha_{j_0,j}
=\Phi\alpha_{j,j_0}\rho_i\alpha_{j_0,j}, 
\end{array}\]
leading to the final equation $\alpha'_{j,i,j}=\alpha_{j,j_0}\rho_i\alpha_{j_0,j}$.
Thus the generators from $\mathcal{G}_{I}(\Psi)=\mathcal{G}_{I}(\Phi^{j_0})$ can be expressed in a relatively simple manner in terms of the generators from $\mathcal{G}_{I}(\Phi)$ as described in (\ref{changeofgenerators}). 
\smallskip

Note also that the generators $\alpha_{j_0,i,j_0}=\rho'_{i}$ commute under certain conditions, just like the original generators $\rho_i$ do:
\begin{equation}
\label{alphascom}
\alpha_{j_0,i,j_0}\alpha_{j_0,l,j_0} = \alpha_{j_0,l,j_0}\alpha_{j_0,i,j_0} \qquad (i,l\in I,\, |i-l|\geq 2).
\end{equation} 
In fact, working out the effects of the two products on the base flag leads to the same results. More precisely, 
\[\Phi\alpha_{j_0,i,j_0}\alpha_{j_0,l,j_0}=
(\Phi^{j_0,i,j_0})\alpha_{j_0,l,j_0}=(\Phi\alpha_{j_0,l,j_0})^{j_0,i,j_0}
=\Phi^{j_0,l,j_0,j_0,i,j_0}=\Phi^{j_0,l,i,j_0},\]
and similarly, $\Phi\alpha_{j_0,l,j_0}\alpha_{j_0,i,j_0}=\Phi^{j_0,i,l,j_0}$;
since $|i-l|\geq 2$, these two flags coincide.

For example, if $|I|=n-1$ and $\bar{I}=\{j_0\}$ with $j_{0}\neq 0,n-1$, the relations in (\ref{alphascom}) include the special case 
\begin{equation}
\label{alphaj0com}
\alpha_{j_0,j_{0}-1,j_0}\alpha_{j_0,j_{0}+1,j_0} = \alpha_{j_0,j_{0}+1,j_0}\alpha_{j_0,j_{0}-1,j_0} .
\end{equation}

\subsection{Stabilizers}
\label{stabsection}

As before, let $\mathcal{P}$ be a two-orbit $n$-polytope in the class $2_I$ with base flag $\Phi$, and let $\mathcal{G}_{I}$ denote the distinguished generating set $\{\rho_{i},\alpha_{j,k},\alpha_{j,i,j}\}$ for $\Gamma(\mathcal{P})$ determined by the base flag $\Phi$. For simplicity we set 
\[\Gamma:=\Gamma(\mathcal{P}).\] 
In contexts where we view this group along with the alternative generating set $\{\rho'_{i},\alpha'_{j,k},\alpha'_{j,i,j}\}$ determined by a flag that is adjacent to $\Phi$ but from the other flag-orbit (as described at the end of the previous section), we sometimes denote $\Gamma$ by $\Gamma'$.
\smallskip

The specific nature of the generators in $\mathcal{G}_{I}$ permits us to describe the stabilizers of the subchains of $\Phi$ in $\Gamma$. Each subchain of $\Phi$ is of the form 
\[\Phi_J := \{ \Phi_j \mid j \in J \},\]
for some $J \subseteq N$. Let $\Gamma_{\Phi_J}$ denote the stabilizer of $\Phi_J$ in $\Gamma$. Clearly, for $J, K \subseteq N$ we have $\Phi_{J \cup K}=\Phi_J \cup \Phi_K$ and therefore
\begin{eqnarray}
\label{intersection1}
\Gamma_{\Phi_{J \cup K}} = \Gamma_{\Phi_J} \cap \Gamma_{\Phi_K}.
\end{eqnarray}

For $J \subseteq N$ define the {\it distinguished subgroups} $\Gamma_J$ of $\Gamma$ by 
\begin{equation}
\label{gamJ}
\Gamma_J:=\langle \rho_i, \alpha_{j,k},  \alpha_{j,i,j} \mid i \in I \cap \bar{J}; \, j,k \in \bar{I} \cap \bar{J} \rangle.
\end{equation} 
It is often convenient to relabel these subgroups using the complements of index sets. Accordingly, for $J \subseteq N$ we also define the subgroups $\Gamma^J$ by 
\begin{equation}
\label{gamJbar}
\Gamma^{J}:= \Gamma_{\bar{J}} = \langle \rho_i, \alpha_{j,k}, \alpha_{j,i,j} \mid i \in I \cap J; \, j,k \in \bar{I} \cap J \rangle.
\end{equation} 
Then, by definition, $\Gamma_N=\Gamma^{\emptyset}=1$, the trivial group, and $\Gamma_{\emptyset}=\Gamma^N=\Gamma$. 
\smallskip

The following lemma shows that the distinguished subgroups of $\Gamma$ are precisely the stabilizers of the subchains of the base flag $\Phi$.

\begin{lemma}
\label{gamstab}
For each $J\subseteq N$ we have $\Gamma_{\Phi_J}=\Gamma_J=\Gamma^{\bar{J}}$.
\end{lemma}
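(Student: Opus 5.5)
We prove the two inclusions $\Gamma_J\subseteq\Gamma_{\Phi_J}$ and $\Gamma_{\Phi_J}\subseteq\Gamma_J$. The equality $\Gamma_J=\Gamma^{\bar{J}}$ holds by definition (\ref{gamJbar}), so only the first equality needs an argument.

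For $\Gamma_J\subseteq\Gamma_{\Phi_J}$, we check that each generator of $\Gamma_J$ fixes every face $\Phi_m$ with $m\in J$. A generator $\gamma$ of $\Gamma_J$ is of the form $\rho_i$, $\alpha_{j,k}$ or $\alpha_{j,i,j}$, with all its indices in $\bar{J}$. By (\ref{phiaction}), $\Phi\gamma=\Phi^{s(\gamma)}$, where every superscript in $s(\gamma)$ lies in $\bar J$. Changing a flag at a rank $l$ leaves its $m$-face unchanged whenever $m\ne l$. Hence $\Phi^{s(\gamma)}$ has the same $m$-face as $\Phi$ for each $m\in J$, that is, $\Phi_m\gamma=(\Phi\gamma)_m=\Phi_m$. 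So each generator, and therefore all of $\Gamma_J$, stabilizes $\Phi_J$.

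For $\Gamma_{\Phi_J}\subseteq\Gamma_J$, let $\psi\in\Gamma_{\Phi_J}$ and put $\Psi:=\Phi\psi$. Then $\Psi$ contains $\Phi_J$ and lies in the same orbit as $\Phi$. By strong flag-connectedness, $\Phi$ and $\Psi$ are joined by a sequence of successively adjacent flags $\Phi,\Phi^{i_1},\ldots,\Phi^{i_1,\ldots,i_l}=\Psi$, all containing $\Phi\cap\Psi\supseteq\Phi_J$. Since every flag in this sequence contains $\Phi_J$, every superscript satisfies $i_m\in\bar{J}$. We now repeat the construction in the proof of Theorem~\ref{generators}, building the element $\widehat{\psi}$ of (\ref{star}) from this particular sequence. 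Each factor $\beta_q$ and $\gamma_q$ there is a product of generators of the form $\rho_{i_m}$, $\alpha_{i_{k_q},i_{k_{q+1}}}$ or $\alpha_{i_{k_q},i_m,i_{k_q}}$, whose indices are taken from the superscripts of the sequence. The indices lying in $I$ belong to $I\cap\bar J$, and those not in $I$ belong to $\bar I\cap\bar J$, so each such generator belongs to $\Gamma_J$ by (\ref{gamJ}). The proof of Theorem~\ref{generators} gives $\Phi\widehat\psi=\Psi=\Phi\psi$. Since $\Gamma$ acts freely on flags, $\psi=\widehat\psi\in\Gamma_J$.

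The main point needing care is the second inclusion. There we must make sure that the factorization in Theorem~\ref{generators} uses only generators whose indices come from the chosen flag sequence, which is clear from the explicit formulas for $\beta_q$ and $\gamma_q$. We also need the connecting sequence to avoid the ranks in $J$, and this is exactly what strong flag-connectedness supplies, because the sequence can be chosen inside the set of flags containing $\Phi\cap\Psi$. The degenerate cases $J=N$ (where $\Gamma_N$ is trivial) and $\bar J\subseteq I$ (where no $\alpha$'s occur) are covered by the same argument.
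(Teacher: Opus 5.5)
Your proposal is correct and follows essentially the same route as the paper. The inclusion $\Gamma_J\subseteq\Gamma_{\Phi_J}$ comes from checking that the generators fix $\Phi_J$. The reverse inclusion comes from rerunning the factorization $\widehat{\psi}$ from the proof of Theorem~\ref{generators} on a connecting flag sequence which, by strong flag-connectedness, uses only superscripts in $\bar{J}$.
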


\begin{proof}
Let $J\subseteq N$. First note that the generators of $\Gamma_J$ stabilize the faces in $\Phi_J$, so clearly $\Gamma_J$ lies in $\Gamma_{\Phi_J}$. For the converse we can adapt the proof of Theorem~\ref{generators} as follows. 

Suppose $\psi\in\Gamma_{\Phi_J}$. Now, if the element $\psi$ in the proof of Theorem~\ref{generators} lies in $\Gamma_{\Phi_J}$, as is the case here, then the strong flag connectedness of $\mathcal{P}$ shows that we may take the flags in (\ref{adjacentsequence1}) in such a way that $i_1, \dots, i_l \notin J$. Therefore, if again $\psi$ is expressed as $\widehat{\psi}$, and then $\widehat{\psi}$ is written as in (\ref{star}), then none of the subscripts occurring in the expressions for the terms $\beta_q$ and $\gamma_q$ lies in $J$. Hence, $\psi=\widehat{\psi} \in \Gamma_J$. Thus $\Gamma_{\Phi_J}$ also lies in $\Gamma_J$.
\end{proof}

The collection of distinguished subgroups of $\Gamma$ behaves nicely with respect to taking intersections and, in particular, has the following property called the {\it intersection property} of~$\Gamma$. This property can be expressed in one of two equivalent ways, depending on whether the groups are indexed by subscripts or superscripts.

\begin{theorem}
\label{intcondlemma}
The collections of distinguished subgroups $\big\{\Gamma_{J}\!\,\big\}_{\!J\subseteq N}$ and $\big\{\Gamma^{J}\!\,\big\}_{\!J\subseteq N}$ of $\Gamma$ satisfy the following properties:
\begin{eqnarray}
\label{intcond}
\begin{array}{ll}
\Gamma_{J} \cap \Gamma_{K} = \Gamma_{J\cup K} &\quad (J,K \subseteq N),\\[.1in]
\Gamma^{J} \cap \Gamma^{K} = \Gamma^{J\cap K} &\quad (J,K \subseteq N).
\end{array}
\end{eqnarray}
\end{theorem}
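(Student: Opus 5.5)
The plan is to deduce the intersection property directly from Lemma~\ref{gamstab} together with the elementary identity (\ref{intersection1}), so that no manipulation of words in the distinguished generators is needed. The key observation is that Lemma~\ref{gamstab} identifies each abstractly defined subgroup $\Gamma_J$ (given by generators in (\ref{gamJ})) with a geometrically defined object, the stabilizer $\Gamma_{\Phi_J}$ of the subchain $\Phi_J$ of the base flag. Intersections of stabilizers are trivially controlled, so all the real work has already been done in Lemma~\ref{gamstab}.

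Concretely, for the first equation I will take $J,K\subseteq N$ and write
\[
\Gamma_J\cap\Gamma_K=\Gamma_{\Phi_J}\cap\Gamma_{\Phi_K}=\Gamma_{\Phi_{J\cup K}}=\Gamma_{J\cup K}.
\]
Here the first and last equalities are Lemma~\ref{gamstab}, applied to $J$, $K$ and $J\cup K$. The middle equality is (\ref{intersection1}), which holds because an automorphism stabilizes the chain $\Phi_{J\cup K}=\Phi_J\cup\Phi_K$ exactly when it stabilizes both $\Phi_J$ and $\Phi_K$. By the definition of a stabilizer here, this means fixing each face, since automorphisms preserve rank.

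For the second equation I will translate through complements, using $\Gamma^J=\Gamma_{\bar J}$ from (\ref{gamJbar}) and De~Morgan's law $\bar J\cup\bar K=\overline{J\cap K}$:
\[
\Gamma^J\cap\Gamma^K=\Gamma_{\bar J}\cap\Gamma_{\bar K}=\Gamma_{\bar J\cup\bar K}=\Gamma_{\overline{J\cap K}}=\Gamma^{J\cap K}.
\]

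I do not expect a real obstacle, since the substantive step (that every stabilizer element can be written using only generators with subscripts outside $J$) is contained in Lemma~\ref{gamstab}. The one point to check is that Lemma~\ref{gamstab} is valid for every $J\subseteq N$, including the degenerate cases $J=\emptyset$ and $J=N$, where $\Gamma_\emptyset=\Gamma$ and $\Gamma_N=1$ (the latter because flag stabilizers are trivial). I would also confirm that the generator set in (\ref{gamJ}) depends only on $I\cap\bar J$ and $\bar I\cap\bar J$, so that the index bookkeeping with complements is consistent.
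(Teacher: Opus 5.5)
Your proposal is correct and follows the paper's proof: the first identity comes from Lemma~\ref{gamstab} together with (\ref{intersection1}), and the second is obtained by passing to complements via $\Gamma^J=\Gamma_{\bar J}$ and $\bar J\cup\bar K=\overline{J\cap K}$.
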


\begin{proof}
The first property follows immediately from (\ref{intersection1}) and Lemma~\ref{gamstab}. In fact, 
\[ \Gamma_{J} \cap \Gamma_{K} = \Gamma_{\Phi_J} \cap \Gamma_{\Phi_K}
= \Gamma_{\Phi_{J\cup K}}= \Gamma_{J\cup K}.\]
The second property just rephrases the first. In fact, $\Gamma^{J}=\Gamma_{\bar{J}}$,\  $\Gamma^{K}=\Gamma_{\bar{K}}$, and therefore
\[\Gamma^{J\cap K}=\,\Gamma_{\bar{J\cap K}}\,=\,\Gamma_{\bar{J}\cup \bar{K}}
\,=\, \Gamma_{\bar{J}}\cap\Gamma_{\bar{K}} \,=\, \Gamma^{J}\cap\Gamma^{K}.\]
This completes the proof.
\end{proof}
\smallskip

Every section of $\mathcal{P}$ lying between two faces of the base flag naturally determines a subgroup $\Gamma_J$ acting on this section. The following lemma tells us when the action of this subgroup is flag-transitive.

\begin{lemma}
\label{oneorbitsections}
Let $\mathcal{P}$ be a two-orbit $n$-polytope in the class $2_I$, with $I \subset N$, let $\Phi$ be its base flag, and let $-1 \leq r \leq s \leq n$. Set $J:=N \setminus \{r+1, \dots, s-1\}$, so that $\bar{J}= \{r+1,\dots, s-1\}$. Then $\Gamma_J$ acts flag-transitively on the section $\Phi_s / \Phi_r$ of $\mathcal{P}$ if and only if $\bar{J} \subseteq I$. In this case $\Phi_s / \Phi_r$ is a regular $n$-polytope with automorphism group isomorphic to $\Gamma_J$.
\end{lemma}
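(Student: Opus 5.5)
By Lemma~\ref{gamstab}, $\Gamma_J=\Gamma_{\Phi_J}$ is the stabilizer in $\Gamma$ of the chain $\Phi_J$. This chain consists of the faces of $\Phi$ of rank at most $r$ together with those of rank at least $s$. The flags of the section $\Phi_s/\Phi_r$ correspond bijectively to the flags of $\mathcal{P}$ that contain $\Phi_J$: extend a flag of the section by the faces of $\Phi_J$. This correspondence is compatible with the action of $\Gamma_J$. So the question reduces to deciding when $\Gamma_J$ is transitive on the flags of $\mathcal{P}$ containing $\Phi_J$, and I will treat the two directions separately.

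For sufficiency, assume $\bar{J}\subseteq I$. Then for every $i\in\bar{J}$ the generator $\rho_i$ exists, satisfies $\Phi\rho_i=\Phi^i$, and lies in $\Gamma_J$. Lemma~\ref{rhoex}, applied with $K=J$, shows that $\langle\rho_i\mid i\in\bar J\rangle\le\Gamma_J$ is already transitive on the flags containing $\Phi_J$. Hence $\Gamma_J$ acts flag-transitively on $\Phi_s/\Phi_r$.

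For necessity, suppose some $k\in\bar J$ has $k\notin I$. Then $\Phi$ and $\Phi^k$ both contain $\Phi_J$, since $k\notin J$. However, they lie in different $\Gamma$-orbits by the definition of the class $2_I$. In particular no element of $\Gamma_J\le\Gamma$ maps $\Phi$ to $\Phi^k$, so the flags $\Phi\cap(\Phi_s/\Phi_r)$ and $\Phi^k\cap(\Phi_s/\Phi_r)$ of the section are not equivalent under $\Gamma_J$, and transitivity fails.

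For the final assertion, assume $\bar J\subseteq I$. The restriction map $\Gamma_J\to\Gamma(\Phi_s/\Phi_r)$ is a homomorphism. Its image is flag-transitive, so the section is regular. For injectivity, an element of the kernel fixes $\Phi_J$ and every face of the section, hence fixes the whole flag $\Phi$. Since $\Gamma$ acts freely on flags, that element is trivial.

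It remains to see that the image is all of $\Gamma(\Phi_s/\Phi_r)$. The group $\Gamma(\Phi_s/\Phi_r)$ acts regularly on the flags of the section. The image is a subgroup that is already transitive on those flags, so the two groups have the same order on each orbit, and the subgroup must be the whole group. Hence $\Gamma_J\cong\Gamma(\Phi_s/\Phi_r)$, which by the identification of $\Gamma_J$ in Lemma~\ref{gamstab} is generated by the $\rho_i$ with $i\in\bar J$.

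The main point requiring care is this last isomorphism. It is exactly where the faithfulness argument via the free action of $\Gamma$ on flags is needed, combined with the regularity of the automorphism group of the section. The rank of the section is $s-r-1$, not $n$ as stated.
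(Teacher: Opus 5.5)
Your proof is correct and follows essentially the paper's route: sufficiency because $\Gamma_J=\langle\rho_i\mid i\in\bar J\rangle$ acts transitively by strong flag-connectedness (Lemma~\ref{rhoex}), and necessity because an element of $\Gamma_J$ sending the section's base flag to its $k$-adjacent flag would send $\Phi$ to $\Phi^k$ (the paper argues this directly rather than by contrapositive). Your justification of $\Gamma_J\cong\Gamma(\Phi_s/\Phi_r)$, which the paper only asserts, is a useful addition, and you are right that the section has rank $s-r-1$, not $n$; for surjectivity, argue that for any $g$ in the section's group some $h$ in the image agrees with $g$ on the base flag, so $gh^{-1}=1$, rather than comparing orders, which gives nothing for infinite groups.
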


\begin{proof}
If $\bar{J} \subseteq I$, then $\Gamma_J$ is generated by $\rho_{r+1}, \dots, \rho_{s-1}$. Hence, since $\mathcal{P}$ is strongly flag-connected, the group $\Gamma_J$ acts flag transitively on $\Phi_s / \Phi_r$; in particular, $\Phi_s / \Phi_r$~is a regular $n$-polytope with group $\Gamma_J$.

Conversely, if $\Gamma_J$ acts flag transitively on $\Phi_s/\Phi_r$, then for each $l\in\bar{J}$ there exists an element $\widehat{\rho}_l$ of $\Gamma_J$ that takes the flag $\{\Phi_{r+1}, \dots, \Phi_{s-1}\}$ of $\Phi_s / \Phi_r$ to its $l$-adjacent flag in $\Phi_s / \Phi_r$. Since the elements of $\Gamma_J$ also fix the faces $\Phi_k$ with $k\in J$, the element $\widehat{\rho_l}$ must take the base flag $\Phi$ to its $l$-adjacent flag $\Phi^l$ in $\mathcal{P}$. Thus $l\in I$, by the definition of $I$. It follows that $\bar{J} \subseteq I$. 
\end{proof}
\smallskip

We also require the stabilizers of subchains of those flags in $\Gamma\,(=\Gamma')$ which are adjacent to~$\Phi$ but not from the same orbit as $\Phi$. Any such flag, $\Phi^{j_0}$ with $j_0 \notin I$ (say), determines distinguished generators $\rho_i'$, $\alpha'_{j,k}$, $\alpha'_{j,i,j}$ of $\Gamma$ as in (\ref{changeofgenerators}). For $J \subseteq N$ define the subgroups $\Gamma'_J$ of $\Gamma$ by 
\begin{equation}
\label{gamjprime}
\Gamma'_J:=
\langle \rho'_i, \alpha'_{j,k},  \alpha'_{j,i,j} \mid i \in I \cap \bar{J}; \, j,k \in \bar{I} \cap \bar{J} \rangle.
\end{equation} 
In particular, $\Gamma'_N$ is the trivial group and $\Gamma'_{\emptyset}=\Gamma'=\Gamma$.  For the sake of completeness we also define the subgroups $(\Gamma')^J$, for $J \subseteq N$, by $(\Gamma')^J:=\Gamma'_{\bar{J}}$. For $J\subseteq N$, let $\Phi^{j_0}_J$ denote the subchain of $\Phi^{j_0}$ given by 
\[\Phi^{j_0}_J:=\{(\Phi^{j_0})_{j}\mid j\in J\},\] 
and let $\Gamma_{\Phi^{j_0}_J}$ denote its stabilizer in $\Gamma$. 

Then note an immediate consequence of Lemma~\ref{gamstab}, which here is applied with $\Phi^{j_0}$ as the base flag. 

\begin{lemma}
\label{gamprimestabs}
For each $J\subseteq N$ we have $\Gamma_{\Phi^{j_0}_J}=\Gamma_{J}'=(\Gamma')^{\bar{J}}$.
\end{lemma}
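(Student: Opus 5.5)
The plan is to apply Lemma~\ref{gamstab} verbatim, with the base flag $\Phi$ replaced by $\Phi^{j_0}$. The point is that Lemma~\ref{gamstab} and the construction in (\ref{gi}) are stated for an arbitrary choice of base flag in a two-orbit polytope of class $2_I$. Nothing in the proof of Theorem~\ref{generators} or Lemma~\ref{gamstab} used a particular property of $\Phi$ beyond its being a flag of $\mathcal{P}$. The class type set $I$ is intrinsic to $\mathcal{P}$ by Lemma~\ref{twoorbits}, so it does not change when we move from $\Phi$ to $\Phi^{j_0}$.

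First I will check that the elements $\rho'_i$, $\alpha'_{j,k}$, $\alpha'_{j,i,j}$ in (\ref{changeofgenerators}) are exactly the distinguished generators $\mathcal{G}_I(\Phi^{j_0})$ defined by (\ref{phiaction}) with $\Phi^{j_0}$ in place of $\Phi$. By the defining properties used to derive (\ref{changeofgenerators}), we have
\[
(\Phi^{j_0})\rho'_i=(\Phi^{j_0})^i,\qquad
(\Phi^{j_0})\alpha'_{j,k}=(\Phi^{j_0})^{j,k},\qquad
(\Phi^{j_0})\alpha'_{j,i,j}=(\Phi^{j_0})^{j,i,j}.
\]
Hence the subgroup $\Gamma'_J$ in (\ref{gamjprime}) is precisely the distinguished subgroup (\ref{gamJ}) formed relative to the base flag $\Phi^{j_0}$.

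Next I invoke Lemma~\ref{gamstab} for the base flag $\Phi^{j_0}$. Its proof adapts that of Theorem~\ref{generators}: an element fixing $\Phi^{j_0}_J$ is written via a flag sequence from $\Phi^{j_0}$ whose adjacencies avoid $J$, which is possible by strong flag-connectedness. This yields $\Gamma_{\Phi^{j_0}_J}=\Gamma'_J$. The remaining equality $\Gamma'_J=(\Gamma')^{\bar J}$ is the definition $(\Gamma')^{\bar J}:=\Gamma'_{\bar{\bar J}}=\Gamma'_J$.

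There is no real obstacle. The only point needing care is the first step: confirming that the generators defined through the formulas in (\ref{changeofgenerators}) coincide with the intrinsic generators attached to $\Phi^{j_0}$. Once that identification is made, Lemma~\ref{gamprimestabs} is a formal consequence of Lemma~\ref{gamstab}.
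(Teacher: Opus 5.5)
Your proposal is correct and follows the paper's route, which obtains this lemma simply by applying Lemma~\ref{gamstab} with $\Phi^{j_0}$ as the base flag. Your ``check'' that $\rho'_i,\alpha'_{j,k},\alpha'_{j,i,j}$ form $\mathcal{G}_I(\Phi^{j_0})$ holds by definition in the paper, and $(\Gamma')^{\bar J}=\Gamma'_J$ follows directly from $(\Gamma')^{K}:=\Gamma'_{\bar K}$.
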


Further, note the following immediate consequence of Theorem~\ref{intcondlemma}, which also is applied with $\Phi^{j_0}$ as the base flag. 

\begin{theorem}
\label{intcondlemmaprime}
The collections of distinguished subgroups $\big\{\Gamma'_{J}\!\,\big\}_{\!J\subseteq N}$ and $\big\{(\Gamma')^{J}\!\,\big\}_{\!J\subseteq N}$ of $\Gamma'=\Gamma$ satisfy the following properties:
\begin{eqnarray}
\label{intcondprime}
\begin{array}{ll}
\Gamma_{J}' \cap \Gamma_{K}' = \Gamma_{J\cup K}' &\quad (J,K \subseteq N),\\[.1in]
(\Gamma')^{J} \cap (\Gamma')^{K} = (\Gamma')^{J\cap K} &\quad (J,K \subseteq N).
\end{array}
\end{eqnarray}
\end{theorem}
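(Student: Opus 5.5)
The plan is to reduce Theorem~\ref{intcondlemmaprime} to the already established Theorem~\ref{intcondlemma} (or, equivalently, to run its short proof again) with the flag $\Phi^{j_0}$ in place of $\Phi$. The essential input is Lemma~\ref{gamprimestabs}: for every $J\subseteq N$ the subgroup $\Gamma'_J$ is exactly the stabilizer $\Gamma_{\Phi^{j_0}_J}$ of the subchain $\Phi^{j_0}_J$ of $\Phi^{j_0}$. That lemma is Lemma~\ref{gamstab} applied with base flag $\Phi^{j_0}$. This is legitimate because the class type set $I$ does not depend on the choice of base flag (Lemma~\ref{twoorbits}), and the primed generators $\rho'_i,\alpha'_{j,k},\alpha'_{j,i,j}$ are by definition the distinguished generators $\mathcal{G}_I(\Phi^{j_0})$.

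The first step is the analogue of (\ref{intersection1}) for $\Phi^{j_0}$. For $J,K\subseteq N$ we have $\Phi^{j_0}_{J\cup K}=\Phi^{j_0}_J\cup\Phi^{j_0}_K$, so an automorphism fixes every face of $\Phi^{j_0}_{J\cup K}$ if and only if it fixes every face of both subchains. Hence
\[\Gamma_{\Phi^{j_0}_{J\cup K}}=\Gamma_{\Phi^{j_0}_J}\cap\Gamma_{\Phi^{j_0}_K}.\]
Substituting Lemma~\ref{gamprimestabs} into each of the three terms then gives $\Gamma'_J\cap\Gamma'_K=\Gamma'_{J\cup K}$.

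The second step, the superscript form, is pure complementation, exactly as in the proof of Theorem~\ref{intcondlemma}. Using $(\Gamma')^J=\Gamma'_{\bar J}$ and $\overline{J\cap K}=\bar J\cup\bar K$, we get
\[(\Gamma')^{J}\cap(\Gamma')^{K}=\Gamma'_{\bar J}\cap\Gamma'_{\bar K}=\Gamma'_{\bar J\cup\bar K}=(\Gamma')^{J\cap K}.\]

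There is no real obstacle. The only point needing care is confirming that Lemma~\ref{gamstab}, and hence Lemma~\ref{gamprimestabs}, transfers verbatim to the base flag $\Phi^{j_0}$. Nothing in the proof of Theorem~\ref{generators} or Lemma~\ref{gamstab} used any property of $\Phi$ beyond its being a flag of a two-orbit polytope in class $2_I$. The fact that $\Phi^{j_0}$ lies in the other flag orbit is irrelevant, since the flag sequence argument only requires that the target flag $\Phi^{j_0}\psi$ lies in the same orbit as $\Phi^{j_0}$.
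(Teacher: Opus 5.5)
Your proposal is correct and matches the paper's approach: the paper obtains this theorem as an immediate consequence of Theorem~\ref{intcondlemma} applied with $\Phi^{j_0}$ as the base flag. Your two steps are that argument written out, namely the stabilizer identity for subchains of $\Phi^{j_0}$ combined with Lemma~\ref{gamprimestabs}, followed by complementation.
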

\bigskip

The subgroups indexed by one-element subsets $J$ turn out to be particularly important. For each $l\in N$, we set 
\begin{equation}
\label{gamlprime}
\Gamma_l := \Gamma_{\{l\}},\;\; \Gamma^l:=\Gamma^{\{l\}},\;\; 
\Gamma'_l := \Gamma'_{\{l\}},\;\; (\Gamma')^l:=(\Gamma')^{\{l\}}.
\end{equation}
Then $\Gamma_l$ is just the stabilizer of the $l$-face $\Phi_l$ in the base flag $\Phi$ of $\mathcal{P}$, and
\begin{equation}
\label{gamsubl}
\Gamma_l =  \langle \rho_i,\, \alpha_{j,k},\, \alpha_{j,i,j}  \mid i \in I, \, j,k \notin I, \,  i, j, k \neq l \rangle. 
\end{equation}
Similarly, $\Gamma'_{l}$ is the stabilizer of the $l$-face $\Phi_{l}^{j_0}$ in the $j_{0}$-adjacent flag $\Phi^{j_0}$ of $\Phi$, and 
\begin{equation}
\label{gamsublprime}
\Gamma'_l =  \langle \rho'_i,\, \alpha'_{j,k},\, \alpha'_{j,i,j}  \mid i \in I, \, j,k \notin I, \,  i, j, k \neq l \rangle. 
\end{equation}
Further, $\Gamma^l=\langle\rho_l\rangle$ if $l\in I$, and $\Gamma^l = \{1\}$ if $l\notin I$; and similarly, $(\Gamma')^l=\langle\rho'_l\rangle$ if $l\in I$, and $(\Gamma')^l = \{1\}$ if $l\notin I$.
\medskip

The two collections of subgroups $\big\{\Gamma_{J}\!\,\big\}_{\!J\subset N}$ and $\big\{\Gamma'_{J}\!\,\big\}_{\!J\subset N}$ of $\Gamma$ are intertwined as follows. For $J\subset N$ with $j_{0}\in J$, set 
\[J_{0}:=J\setminus\{j_0\}.\]
Then the following lemma holds.

\begin{lemma}
\label{intertwine}
Let $J\subset N$. Then, 
\[\Gamma_{J}'= \left\{
\begin{array}{ll}
\Gamma_{J} & \mbox{if }\, j_{0}\not\in J\\
\Gamma_{J_0}\cap \Gamma_{j_0}' & \mbox{if }\, j_{0}\in J.
\end{array}\right.\]
\end{lemma}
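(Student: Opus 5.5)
The plan is to translate both sides into flag-stabilizers and compare chains. By Lemma~\ref{gamstab}, $\Gamma_J=\Gamma_{\Phi_J}$ is the stabilizer of the subchain $\Phi_J$ of the base flag. By Lemma~\ref{gamprimestabs}, $\Gamma'_J=\Gamma_{\Phi^{j_0}_J}$ is the stabilizer of the corresponding subchain of $\Phi^{j_0}$. The whole argument therefore reduces to comparing the chains $\Phi_J$ and $\Phi^{j_0}_J$ inside $\mathcal{P}$.

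\emph{Case $j_0\notin J$.} The flags $\Phi$ and $\Phi^{j_0}$ differ only in their $j_0$-face, so $(\Phi^{j_0})_j=\Phi_j$ for every $j\in J$. Hence $\Phi^{j_0}_J=\Phi_J$ as sets of faces, and therefore
\[\Gamma'_J=\Gamma_{\Phi^{j_0}_J}=\Gamma_{\Phi_J}=\Gamma_J.\]
Note that $\Gamma'_J$ and $\Gamma_J$ are defined by different generating sets, coming from (\ref{changeofgenerators}). So although the equality is immediate at the stabilizer level, it is not obvious from the generators; this is exactly why I route the argument through the two stabilizer lemmas.

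\emph{Case $j_0\in J$.} Here I decompose $\Phi^{j_0}_J=\Phi^{j_0}_{J_0}\cup\{(\Phi^{j_0})_{j_0}\}$. Since $j_0\notin J_0$, the first case gives $\Phi^{j_0}_{J_0}=\Phi_{J_0}$. By the analogue of (\ref{intersection1}) for subchains of $\Phi^{j_0}$, the stabilizer of a union is the intersection of the stabilizers. This yields
\[\Gamma_{\Phi^{j_0}_J}=\Gamma_{\Phi^{j_0}_{J_0}}\cap\Gamma_{\Phi^{j_0}_{\{j_0\}}}=\Gamma_{\Phi_{J_0}}\cap\Gamma_{\Phi^{j_0}_{\{j_0\}}}.\]
Applying Lemma~\ref{gamstab} to the first factor and Lemma~\ref{gamprimestabs} to the second rewrites this as $\Gamma_{J_0}\cap\Gamma'_{j_0}$, as required. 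Equivalently, one could invoke Theorem~\ref{intcondlemmaprime} to write $\Gamma'_J=\Gamma'_{J_0}\cap\Gamma'_{j_0}$, and then apply the first case to $\Gamma'_{J_0}$.

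There is no real obstacle. The only point needing care is to use the stabilizer characterizations rather than attempting to match generators directly via (\ref{changeofgenerators}). One should also note that the hypothesis $J\subset N$ plays no essential role beyond keeping $J_0$ well defined in the second case.
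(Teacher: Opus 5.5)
Your proof is correct and essentially matches the paper's: the case $j_0\notin J$ is identical (via Lemmas~\ref{gamstab} and~\ref{gamprimestabs}), and for $j_0\in J$ the paper uses exactly the alternative you mention, namely Theorem~\ref{intcondlemmaprime} followed by the first case applied to $J_0$. Your main route through stabilizers of unions of chains is just the unwrapped form of that theorem, and your remark that the hypothesis $J\subset N$ is inessential is right, since the identity also holds trivially for $J=N$.
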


\begin{proof}
If $j_{0}\not\in J$, then $\Phi^{j_0}_J=\Phi_J$, so Lemmas~\ref{gamstab} and~\ref{gamprimestabs} give 
\[\Gamma_{J}'=\Gamma_{\Phi^{j_0}_J}=\Gamma_{\Phi_J}=\Gamma_J.\]
This proves the first equation. On the other hand, if $j_{0}\in J$, then $J=J_{0}\cup\{j_0\}$ and therefore, by Theorem~\ref{intcondlemmaprime} and the first equation applied to $J_0$,
\[\Gamma_{J}'\,=\,\Gamma'_{J_{0}\cup\{j_0\}}\,=\,\Gamma'_{J_0}\cap\Gamma'_{\{j_0\}}
\,=\, \Gamma_{J_0}\cap\Gamma_{j_0}'.\]
This also proves the second equation.
\end{proof}
\smallskip

The following lemma says that every section of a two-orbit polytope in the class $2_I$ is either regular or is itself a two-orbit polytope in a class determined by $I$ and the set of ranks of proper faces in the section. 

Recall that $\mathcal{S}_{r,s}(\mathcal{P})$ denotes the set of all sections $G/F$ of $\mathcal{P}$ that are determined by an incident pair of an $r$-face $F$ and an $s$-face $G$ of $\mathcal{P}$. We also require the following notation. For $L\subseteq \mathbb{Z}$ and $m\in\mathbb{Z}$, let $L-m:=\{l-m\mid l\in L\}$.

\begin{lemma}
\label{sect2orbit}
Let $\mathcal{P}$ be a two-orbit $n$-polytope in the class $2_I$, with $I\subset N$, and let $-1\leq r\leq s\leq n$ and $\mathcal{Q}$ be a section in $\mathcal{S}_{r,s}(\mathcal{P})$. Then $\mathcal{Q}$ is either a regular $(s-r-1)$-polytope or a two-orbit $(s-r-1)$-polytope in the class $2_{I'}$ where 
\[I':=(I\cap \{r+1,\ldots,s-1\})-(r+1).\]
\end{lemma}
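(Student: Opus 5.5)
The idea is to compare the flag orbits of $\mathcal{Q}$ under its own automorphism group with the flag orbits of $\mathcal{P}$ under $\Gamma$, restricted to flags through $F$ and $G$. Write $\mathcal{Q}=G/F$ and set $M:=\{r+1,\ldots,s-1\}$. By Theorem~\ref{sections}, or directly, we may replace $\mathcal{Q}$ by an isomorphic section $\Phi_s/\Phi_r$ for a suitable base flag $\Phi$. This reduces everything to the base flag, where Lemmas~\ref{gamstab} and~\ref{oneorbitsections} apply.

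Flags of $\mathcal{Q}$ correspond bijectively to flags of $\mathcal{P}$ containing $\Phi_{\bar M}=\Phi\setminus\{\Phi_m\mid m\in M\}$. Under this bijection, $m$-adjacency in $\mathcal{P}$ for $m\in M$ becomes $(m-r-1)$-adjacency in $\mathcal{Q}$. The stabilizer $\Gamma_{\bar M}=\Gamma_{\Phi_{\bar M}}$ (Lemma~\ref{gamstab}) acts on $\mathcal{Q}$ by automorphisms, namely by restriction.

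The first step is to show that this restricted group has at most two orbits on flags of $\mathcal{Q}$. Take two flags $\Psi,\Lambda$ of $\mathcal{P}$ containing $\Phi_{\bar M}$ that lie in the same $\Gamma$-orbit, say $\Lambda=\Psi\gamma$. Then $\gamma$ maps the chain $\Psi_{\bar M}=\Phi_{\bar M}$ to $\Lambda_{\bar M}=\Phi_{\bar M}$, so $\gamma\in\Gamma_{\bar M}$. Hence the $\Gamma_{\bar M}$-orbits on flags of $\mathcal{Q}$ are exactly the traces of the two $\Gamma$-orbits, and there are at most two of them.

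The second step determines the adjacency pattern. For $m\in M\cap I$, the flags $\Psi$ and $\Psi^m$ are always in the same $\Gamma$-orbit, hence in the same $\Gamma_{\bar M}$-orbit. For $m\in M\setminus I$ they are in different orbits. It follows that if $M\subseteq I$, the restricted group is flag-transitive on $\mathcal{Q}$, and $\mathcal{Q}$ is regular (this is also Lemma~\ref{oneorbitsections}). If $M\not\subseteq I$, the restricted group has exactly two flag orbits on $\mathcal{Q}$, and in it $(m-r-1)$-adjacent flags share an orbit precisely when $m\in I$.

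The main obstacle is the passage from the restricted group to the full group $\Gamma(\mathcal{Q})$. This group may be larger, since automorphisms of $\mathcal{Q}$ need not extend to $\mathcal{P}$, so $\mathcal{Q}$ could have only one orbit and be regular. The statement allows exactly this, as the dichotomy reads ``regular or two-orbit in class $2_{I'}$''. Since $\Gamma(\mathcal{Q})$ contains the restricted group, it has at most two flag orbits. Suppose it has exactly two. Then these two orbits must coincide with the two orbits of the restricted group, because each $\Gamma(\mathcal{Q})$-orbit is a union of restricted orbits and there are only two of each. Consequently $\mathcal{Q}$ has the same adjacency pattern as found in the second step. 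By Lemma~\ref{twoorbits} and the definition of class, $\mathcal{Q}$ lies in the class $2_{I'}$ with $I'=(I\cap M)-(r+1)$.

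Finally, any section in $\mathcal{S}_{r,s}$ is of the form $\Psi_s/\Psi_r$, and choosing $\Psi$ as the base flag carries the argument over. This completes the plan.
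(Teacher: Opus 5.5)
Your proof is correct and follows essentially the paper's route. In both, the stabilizer of the complementary subchain acts on $\mathcal{Q}$ by restriction, its flag orbits are exactly the traces of the two $\Gamma$-orbits, and regularity or the class $2_{I'}$ is read off from which adjacencies at ranks in $\{r+1,\ldots,s-1\}$ preserve orbits. Your two refinements are only streamlining: taking a flag through $\mathcal{Q}$ as base flag absorbs the paper's separate case $\mathcal{Q}=\Phi^k_s/\Phi^k_r$ (which then falls under $M\subseteq I$), and your orbit-coincidence argument makes explicit why a two-orbit $\Gamma(\mathcal{Q})$ has the same orbits as the restricted group, a point the paper asserts without detail.
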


\begin{proof}
Let $\Phi$ be any flag of $\mathcal{P}$. By Theorem~\ref{sections} and its proof, $\mathcal{Q}$ is equivalent under $\Gamma(\mathcal{P})$ to $\Phi_{s}/\Phi_{r}$ if $\bar{I}\not\subseteq\{r,s\}$; or to $\Phi_{s}/\Phi_{r}$ or $\Phi_{s}^{k}/\Phi_{r}^{k}$, with any $k\in\bar{I}$, if $\bar{I}\subseteq\{r,s\}$. Hence we may assume that $\mathcal{Q}$ coincides with $\Phi_{s}/\Phi_{r}$ or $\Phi_{s}^{k}/\Phi_{r}^{k}$, respectively. Set $J:=N\setminus\{r+1,\ldots,s-1\}$, so that $\bar{J}=\{r+1,\ldots,s-1\}$. As before, $\Phi_{J}=\{\Phi_{l}\mid l \in J\}$.

First suppose that $\mathcal{Q}=\Phi_s/\Phi_r$. Then $\Gamma_{\Phi_J}$ is a subgroup of $\Gamma(\mathcal{Q})$, and by Lemma~4, $\Gamma_{\Phi_J}=\Gamma^{\bar{J}}=\Gamma_J$. We show that $\mathcal{Q}$ has at most two flag orbits under $\Gamma_{\Phi_J}$. Now suppose $\Psi'$ is any flag of $\mathcal{Q}$, and set $\Psi:=\Psi'\cup\Phi_J$. Two possibilities can occur. 

Suppose $\bar{I}\cap\bar{J}\neq\emptyset$, and let $t\in\bar{I}\cap\bar{J}$. Then $\Phi_J$ is a subset of all three flags $\Psi$, $\Phi$ and $\Phi^t$, and hence $\Psi$ is equivalent to exactly one of $\Phi$ or $\Phi^t$ under an automorphism of $\mathcal{P}$ that necessarily must fix each face in $\Phi_J$ and therefore lie in $\Gamma_{\Phi_J}$. Thus $\mathcal{Q}$ has at most two flag orbits under $\Gamma_{\Phi_J}$. In particular, if $\mathcal{Q}$ is not regular, then $\mathcal{Q}$ must have two flag orbits under $\Gamma_{\Phi_J}$, and $\mathcal{Q}$ is a two-orbit polytope with automorphism group $\Gamma_{\Phi_J}=\Gamma_{J}$. In this case, since the class of a two-orbit polytope is uniquely characterized by the set of generators of the form $\rho_i$ in its group, which here is given by $\{\rho_{i}\mid i\in I\cap\bar{J}\}$, the class type set of $\mathcal{Q}$ (relative to the rank function of $\mathcal{Q}$ inherited from $\mathcal{P}$) must coincide with $I\cap\bar{J}$. If $\mathcal{Q}$ is taken as an $(r-s-1)$-polytope in its own right, independent of $\mathcal{P}$, then the correct class type set for $\mathcal{Q}$ is $I':=(I\cap\bar{J})-(r+1)$.  (Note that, in principle, $\mathcal{Q}$ could still be regular if $\bar{I}\cap\bar{J}\neq\emptyset$. In this case $\Gamma_{\Phi_J}$ could only be a subgroup of index 2 in $\Gamma(\mathcal{Q})$, since $t\notin I$ and thus $\Gamma_{\Phi_J}$ cannot contain an automorphism mapping $\Phi$ to $\Phi^t$.)

On the other hand, if $\bar{I}\cap\bar{J}=\emptyset$, then $\bar{J}\subseteq I$ and hence $\Psi$ is equivalent to $\Phi$ under an automorphism in $\Gamma_{\Phi_J}$. In this case $\mathcal{Q}$ is regular. 

Now suppose $\bar{I}\subseteq\{r,s\}$ and $\mathcal{Q}=\Phi_s^k/\Phi_r^k$, where  $k\in\bar{I}$. We now employ the subgroups $\Gamma_{J}'=(\Gamma')^{\bar{J}}$ defined relative to the $k$-adjacent flag $\Phi^k$ of $\Phi$. As before, let $\Psi'$ be a flag of $\mathcal{Q}$ and set $\Psi:=\Psi'\cup\Phi_J^k$. Then $\Phi_J^k$ lies in both $\Psi$ and $\Phi^k$, and $\bar{J}\subseteq I$ since $\bar{I}\subseteq\{r,s\}$. Now apply the analogue of Lemma~\ref{rhoex} for the subgroups defined with respect to $\Phi^k$ (that is, for $\Gamma_{J}'$ and $\Gamma_{\Phi^k_J}$), as well as Lemma~\ref{gamprimestabs}. It follows that $\Psi$ can be mapped under $\Gamma_{J}'=\Gamma_{\Phi^k_J}$ to the flag $\Phi^k$. Hence $\mathcal{Q}$ is again regular in this case.
\end{proof}

Next we analyze in greater detail the structure of $\Gamma_l$, the stabilizer of the $l$-face in $\Phi$. To this end, we define the three subgroups $\Gamma_l^-$, $\Gamma_l^+$ and $\Gamma_l^{\pm}$ of $\Gamma_l$ as follows:
\begin{equation}
\label{Gamma-}
\begin{array}{lllll}
\Gamma_l^{-} &\!\!\!:=\!\!\!& \langle \rho_i,\, \alpha_{j,k},\, \alpha_{j,i,j}  \mid i \in I, \, j,k \notin I, \,  i, j, k <l \rangle 
&\!\!\!=\!\!\!& \Gamma_{\{l \dots, n-1\}},  \\[0.08in]
\Gamma_l^+ &\!\!\!:=\!\!\!& \langle \rho_i,\, \alpha_{j,k}, \,\alpha_{j,i,j}  \mid i \in I,\, j,k \notin I, \,  i, j, k > l \rangle
&\!\!\!=\!\!\!& \Gamma_{\{0 \dots, l\}}, \\[0.08in]
\Gamma_l^{\pm} &\!\!\!:=\!\!\!& \langle\alpha_{j,k}\mid j,k \notin I, \,  j<l<k \rangle. 
\end{array}
\end{equation}
Note that the generators $\alpha_{j,k}$ of $\Gamma_l^{\pm}$ are all involutions, by the remarks following (\ref{invalphajk}). Moreover, for each $l\in N$ the two subgroups $\Gamma_l^{-}$ and $\Gamma_l^+$ centralize each other; that is, each generator of $\Gamma_l^{-}$ commutes with each generator of $\Gamma_l^+$. This follows from the fact that the elements of $\Gamma_l^-$ fix the $r$-faces $\Phi_r$ of $\Phi$ of rank $r\geq l$, while the elements of $\Gamma_l^+$ fix the $r$-faces $\Phi_r$ of $\Phi$ of rank $r\leq l$; bear in mind that an automorphism of a polytope which fixes a flag must necessarily be the identity. Further, since $\Gamma_l^-$ and $\Gamma_l^+$ commute at the level of elements and have trivial intersection, the product of groups $\Gamma_l^{-}\Gamma_l^{+} = \Gamma_l^{+} \Gamma_l^{-}$ must be a subgroup of $\Gamma$ isomorphic to the direct product $\Gamma_l^{-}\times\Gamma_l^{+}$. Note that this subgroup must contain the product of any two generators of $\Gamma_l^{\pm}$. More specifically, 
\begin{eqnarray}
\label{alphcom}
\alpha_{j,k}\,\alpha_{j',k'} = \alpha_{k',k}\,\alpha_{j',j}=\alpha_{j',j}\,\alpha_{k',k} \in \Gamma_l^{-}\Gamma_l^{+} ,
\end{eqnarray}
where $j,j',k,k'\notin I$ with $j,j'< l <k,k'$. The equality between the three products in (\ref{alphcom}) can be verified as in similar situations before, by computing the image of $\Phi$ under each product and then noting that 
\[ \Phi^{j',k',j,k}=\Phi^{j',j,k',k}=\Phi^{k',k,j',j}.\]
\smallskip

The following lemma describes the structure of the subgroups $\Gamma_l$ of $\Gamma$.

\begin {lemma}
\label{gasub}
The subgroup $\Gamma_l^{-} \Gamma_l^{+}$ of $\Gamma_l$ is isomorphic to $\Gamma_l^{-}\times\Gamma_l^{+}$ and has index at most 2 in $\Gamma_l$. In particular, the index is 2 if and only if there exist $j,k \notin I$ such that $j<l<k$ (that is, if and only if the subgroup $\Gamma_l^{\pm}$ is nontrivial). In this case, for any such $j$ and $k$, 
$$\Gamma_l=\Gamma_l^{-}\Gamma_l^{+}\langle \alpha_{j,k} \rangle \cong \Gamma_l^{-}\Gamma_l^{+} \ltimes C_2 
\cong (\Gamma_l^{-}\times\Gamma_l^{+})\ltimes C_2.$$
If the index is $1$, then, of course,
\[\Gamma_l=\Gamma_l^{-}\Gamma_l^{+} \cong \Gamma_l^{-}\times\Gamma_l^{+}.\]
\end{lemma}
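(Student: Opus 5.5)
The plan is to work with the action of $\Gamma_l$ on the flags that contain $\Phi_l$, using a parity count of adjacencies outside $I$ on each side of $l$. The direct product claim was already obtained in the discussion preceding the lemma: $\Gamma_l^-$ and $\Gamma_l^+$ centralize each other and meet trivially. Indeed, an element of the intersection fixes $\Phi_r$ for all $r$, so it fixes $\Phi$ and is the identity. It therefore remains to compute the index of $\Gamma_l^-\Gamma_l^+$ in $\Gamma_l$. For this I would describe $\Gamma_l^-\Gamma_l^+$ as the set of $\psi\in\Gamma_l$ for which $\Phi\psi$ can be reached from $\Phi$ by a sequence of adjacencies in which an even number of superscripts $<l$ lie outside $I$. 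Equivalently, an even number of superscripts $>l$ lie outside $I$, since the total number outside $I$ is even.

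The key steps are as follows.

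\emph{Step 1: every $\psi\in\Gamma_l$ decomposes.} Take $\psi\in\Gamma_l$. By strong flag-connectedness, $\Phi\psi=\Phi^{i_1,\ldots,i_m}$ with all $i_t\neq l$. Superscripts $<l$ commute with superscripts $>l$, since their difference is at least $2$. So I can reorder to get $\Phi\psi=\Phi^{a,b}$, where $a$ is a word in letters $<l$ and $b$ is a word in letters $>l$. Let $\epsilon_-$ be the number of letters of $a$ outside $I$, and $\epsilon_+$ the number of letters of $b$ outside $I$; since $\Phi\psi$ lies in the orbit of $\Phi$, $\epsilon_-+\epsilon_+$ is even.
\begin{itemize}
\item If $\epsilon_-$ is even, then $\Phi^a$ is in the orbit of $\Phi$ and contains $\Phi_r$ for $r\geq l$. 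The factorization in the proof of Theorem~\ref{generators}, applied as in Lemma~\ref{gamstab}, gives $\psi_-\in\Gamma_{\{l,\ldots,n-1\}}=\Gamma_l^-$ with $\Phi\psi_-=\Phi^a$. Similarly there is $\psi_+\in\Gamma_l^+$ with $\Phi\psi_+=\Phi^b$. Using (\ref{append}) for the generators, $\Phi\psi_+\psi_-=(\Phi\psi_-)^{b}=\Phi^{a,b}$, so $\psi=\psi_+\psi_-\in\Gamma_l^-\Gamma_l^+$.
\item If $\epsilon_-$ is odd, then $a$ and $b$ each contain a letter outside $I$, say $j<l<k$ with $j,k\notin I$. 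Then $\Gamma_l^\pm\neq 1$. Replacing $a$ by $a,j$ and $b$ by $k,b$, the reordered word represents $\Phi^{a,j,k,b}$ up to the commutation; this is essentially $\Phi\psi'$ for $\psi'=\psi\alpha_{j,k}^{\pm1}$, by (\ref{append}). Both new parities are even, so the previous case gives $\psi\in\Gamma_l^-\Gamma_l^+\langle\alpha_{j,k}\rangle$.
\end{itemize}
Hence the index is at most $2$, and it is $1$ when no such $j,k$ exist.

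\emph{Step 2: the index is exactly $2$ when $j<l<k$ with $j,k\notin I$ exist.} I must show $\alpha_{j,k}\notin\Gamma_l^-\Gamma_l^+$. Suppose $\alpha_{j,k}=\psi_-\psi_+$. The element $\psi_-$ fixes $\Phi_r$ for $r\geq l$, so $\Phi\psi_-$ agrees with $\Phi$ above rank $l$; it lies in the orbit of $\Phi$. The same holds for $\psi_+$ below rank $l$. Then $\Phi^{j,k}=\Phi\psi_-\psi_+$ has its lower part (ranks $<l$) equal to that of $\Phi\psi_-$, and its upper part equal to that of $\Phi\psi_+$. The lower part of $\Phi^{j,k}$ coincides with that of $\Phi^j$. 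So $\Phi^j$ and $\Phi\psi_-$ share all faces of rank $\leq l$ as well as all faces of rank $\geq l$, and are therefore the same flag. Thus $\Phi^j$ lies in the orbit of $\Phi$, contradicting $j\notin I$. I expect this step, keeping track of which faces the two factors fix, to be the main obstacle; the identification of faces must be done with care.

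\emph{Step 3: the semidirect product.} $\Gamma_l^-\Gamma_l^+$ has index $2$, so it is normal. The element $\alpha_{j,k}$ is an involution by the remarks after (\ref{invalphajk}), since $|j-k|\geq2$, and it lies outside the subgroup. Hence $\Gamma_l=(\Gamma_l^-\times\Gamma_l^+)\rtimes\langle\alpha_{j,k}\rangle$.
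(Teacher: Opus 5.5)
Your argument is sound, but the odd case of Step 1 is mis-stated. The flag $\Phi^{a,j,k,b}$ is in general neither $\Phi(\psi\alpha_{j,k})$ nor $\Phi(\alpha_{j,k}\psi)$. In fact $\Phi(\psi\alpha_{j,k})=(\Phi^{j,k})^{a,b}=\Phi^{j,a,k,b}$, by moving $k$ past the letters of $a$. Likewise $\Phi(\alpha_{j,k}\psi)=(\Phi\psi)^{j,k}=\Phi^{a,j,b,k}$, by moving $j$ past the letters of $b$. The words $a,j$ and $j,a$ (or $k,b$ and $b,k$) give different flags in general. With either correct version both parities become even, so your even case gives $\psi\alpha_{j,k}\in\Gamma_l^-\Gamma_l^+$ (respectively $\alpha_{j,k}\psi\in\Gamma_l^-\Gamma_l^+$), which is what you need.

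You should also fix one pair $j<l<k$ with $j,k\notin I$ in advance, instead of reading it off the word for $\psi$. The parity flip does not require $j$ and $k$ to occur in $a$ and $b$. Without a fixed pair, Step 1 only covers $\Gamma_l$ by $\Gamma_l^-\Gamma_l^+$ together with possibly several cosets $\Gamma_l^-\Gamma_l^+\alpha_{j,k}$. In Step 2, the claim that $\Phi\psi_-\psi_+$ and $\Phi\psi_-$ share their faces of rank $\leq l$ holds because $\psi_-$ and $\psi_+$ commute. Thus $\Phi_r\psi_-\psi_+=\Phi_r\psi_+\psi_-=\Phi_r\psi_-$ for $r\leq l$, and with that, $\Phi\psi_-=\Phi^j$ and the resulting contradiction are correct.

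Your route differs from the paper's, which works entirely with generators. There, every generator of $\Gamma_l$ except the involutions $\alpha_{j,k}$ with $j<l<k$ already lies in $\Gamma_l^-\Gamma_l^+$, using $\alpha_{j,i,j}=\rho_i$ when $i$ and $j$ lie on opposite sides of $l$. Then (\ref{alphcom}) puts all these $\alpha_{j,k}$ into a single coset of $\Gamma_l^-\Gamma_l^+$. That this coset together with $\Gamma_l^-\Gamma_l^+$ exhausts $\Gamma_l$ tacitly uses the normalizing relations (\ref{asrs}) and (\ref{asrsinv}) recorded after the lemma. The paper also does not spell out why $\alpha_{j,k}\notin\Gamma_l^-\Gamma_l^+$.

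Your parity argument on flag words needs no relations among the generators and gets normality of $\Gamma_l^-\Gamma_l^+$ for free from the index. Your Step 2 supplies exactly the verification that the index is $2$. In effect you prove an intrinsic membership criterion: $\psi\in\Gamma_l$ lies in $\Gamma_l^-\Gamma_l^+$ if and only if the flag made of the faces of $\Phi\psi$ of rank at most $l$ and the faces of $\Phi$ of rank at least $l$ lies in the orbit of $\Phi$. The paper's route is shorter and stays within the generator-and-relation framework used throughout Section~\ref{grour}.
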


\begin{proof}
All the generators of $\Gamma_l$, except for $\alpha_{j,k}$ with $j<l<k$, lie in $\Gamma_l^{-}\Gamma_l^{+}$. Note here that $\alpha_{j,i,j}=\rho_i$ if $i<l<j$ or $j<l<i$. The excluded generators $\alpha_{j,k}$ with $j<l<k$ are involutions, and so $\alpha_{j,k}^{-1}=\alpha_{j,k}$. If $j,j', k,k' \notin I$ such that $j,j' < l < k,k'$, then (\ref{alphcom}) shows that $\alpha_{j,k} \in \Gamma_l^{-}\Gamma_l^{+}\alpha_{j',k'}$; hence the two cosets $\Gamma_l^{-}\Gamma_l^{+}\alpha_{j,k}$ and $\Gamma_l^{-}\Gamma_l^{+}\alpha_{j',k'}$ must be the same. Now the  lemma follows.
\end{proof}

Note further that the generators $\alpha_{j,k}$ of $\Gamma_l^{\pm}$, with $j,k\notin I$ and $j<l<k$, normalize each of the subgroups $\Gamma_l^{-}$ and $\Gamma_l^{+}$, that is,
\begin{eqnarray}
\alpha_{j,k}^{-1}\Gamma_l^{-} \alpha_{j,k} = \Gamma_l^{-}, \;\qq \alpha_{j,k}^{-1}\Gamma_l^{+} \alpha_{j,k} = \Gamma_l^{+}.
\end{eqnarray}
More explicity, under conjugation by $\alpha_{j,k}$, the generators of $\Gamma_l^{-}$ are transformed as follows. If $i\in I$ and $s,t \notin I$ such that $i,s,t<l$, then 
\begin{eqnarray} 
\label{asrs}
\alpha_{j,k}^{-1}\rho_i\alpha_{j,k}=\alpha_{j,i,j}, \ \
\alpha_{j,k}^{-1}\alpha_{s,t}\alpha_{j,k}= \alpha_{t,j}\alpha_{j,s}, \ \
\alpha_{j,k}^{-1}\alpha_{s,i,s}\alpha_{j,k}=\alpha_{s,j}\rho_i\alpha_{j,s}.
\end{eqnarray}
Bear in mind here that $k>l$. Once again, this can be verified by evaluating each side of an equation in (\ref{asrs}) on the base flag $\Phi$. Similar relationships also hold for conjugation of the generators of the subgroup $\Gamma_l^{+}$ by $\alpha_{k,j}$, the inverse of $\alpha_{j,k}$. More precisely, if $i\in I$ and $s,t \notin I$ such that $i,s,t>l$, then 
\begin{eqnarray} 
\label{asrsinv}
\alpha_{k,j}^{-1}\rho_i\alpha_{k,j}=\alpha_{k,i,k}, \ \
\alpha_{k,j}^{-1}\alpha_{s,t}\alpha_{k,j}= \alpha_{t,k}\alpha_{k,s}, \ \
\alpha_{k,j}^{-1}\alpha_{s,i,s}\alpha_{k,j}=\alpha_{s,k}\rho_i\alpha_{k,s}.
\end{eqnarray}

\subsection{Characterizing the partial order}
\label{sec:order}

The goal of this section is to characterize the partial order of two-orbit polytopes in terms of the distinguished generators of the automorphism group, as summarized in Theorem~\ref{ordercharacterize} at the end of this section. For a regular or chiral polytope, the corresponding characterization involves intersections of cosets of face stabilizers and ultimately rests on the fact that the polytope is fully-transitive; that is, its automorphism group acts transitively on the faces of each rank. As we shall see, the situation is more complicated for arbitrary two-orbit polytopes, although the principal approach is similar. 

As before, let $\mathcal{P}$ be a two-orbit $n$-polytope in the class $2_I$, with $I\subset N=\{0,\ldots,n-1\}$, and let $\Phi=\{\Phi_0,\ldots,\Phi_{n-1}\}$ be its base flag. Recall from Theorem~\ref{almost-fully-transitive} that $\mathcal{P}$ is fully-transitive if and only if 
${\rm rd}(I)>1$; and that, if ${\rm rd}(I)=1$ and $I = N\setminus \{j_0\}$ for some $j_0\in N$, the polytope $\mathcal{P}$ has two orbits on the set of $j$-faces but still is $i$-face transitive for every $i\neq j_0$. Further, recall the results of Theorem~\ref{sections} about the transitivity properties of $\Gamma(\mathcal{P})$ on the sets of comparable sections of $\mathcal{P}$. In the present context, these results on sections are relevant because pairs of incident faces of $\mathcal{P}$ determine sections of $\mathcal{P}$ and vice versa.

In light of Theorem~\ref{almost-fully-transitive} and Theorem~\ref{sections}, the subsequent discussion falls naturally into three cases for the class type set $I$ given by the rank deficiency values 
\begin{equation}
\label{classcases}
{\rm rd}(I)\geq 3,\; {\rm rd}(I)=2, \mbox{ or }\,{\rm rd}(I)=1,
\end{equation}
or more explicitly, $|I| \leq n-3$, $|I| = n-2$, or $|I|=n-1$, respectively.

If ${\rm rd}(I)\geq 2$, as holds true in the first two cases of (\ref{classcases}), the polytope $\mathcal{P}$ is fully-transitive, so each face of $\mathcal{P}$ is equivalent under $\Gamma(\mathcal{P})$ to a face in the base flag $\Phi$. Moreover, if additionally ${\rm rd}(I)\geq 3$, then any two comparable sections of $\mathcal{P}$ are equivalent under $\Gamma(\mathcal{P})$.
However, if ${\rm rd}(I)=2$ and $\bar{I}=\{j_0,k_0\}$) for some $j_0,k_0 \in N$ with $j_{0}<k_0$, then $\Gamma(\mathcal{P})$ no longer acts transitively on each set of comparable sections, though $\mathcal{P}$ is still fully-transitive; in fact, in this case there are precisely two orbits on the set of sections determined by an incident pair of a $j_0$-face and a $k_0$-face. On the other hand, if ${\rm rd}(I)=1$, then the polytope $\mathcal{P}$ is not even fully-transitive.
\medskip

Our further analysis breaks down into three lemmas, which, when combined, establish Theorem~\ref{ordercharacterize} below. 
\smallskip

The first lemma characterizes incidence between pairs of faces which are in the same orbit as faces of $\Phi$. In particular, this provides a complete characterization of the partial order when ${\rm rd}(I)\geq 3$. The lemma will also describe the incidence between faces for most pairs of face ranks when ${\rm rd}(I)=2$ or ${\rm rd}(I)=1$. However, when ${\rm rd}(I)=1$, not all faces are in the same orbit as faces of $\Phi$ and special considerations are needed. In this case, if $\bar{I}=\{j_0\}$, there are two orbits of $j_0$-faces represented by $\Phi_{j_0}$ and $\Phi_{j_0}^{j_0}$, respectively, but the faces of any rank $i\neq j_0$ are in the same orbit as faces in $\Phi$. Our Lemma~\ref{incnminus1} below will describe incidence of pairs of faces in which one face is a $j_0$-face in the orbit of $\Phi_{j_0}^{j_0}$.

We begin with the most prevalent scenario for pairs of faces.

\begin{lemma}
\label{char1}
Let $\mathcal{P}$ be a two-orbit $n$-polytope in the class $2_I$, with $I \subset N$, and let $\Phi$ be the base flag of $\mathcal{P}$. Let $i,j$ be such $-1 \leq i \leq j \leq n$ and $\bar{I}\neq \{i,j\}$, and let $\phi, \psi \in \Gamma(\mathcal{P})$. Then the following equivalence holds:
\begin{eqnarray}
\label{equivnminus3}
\Phi_i\phi \leq\Phi_j\psi \;\Longleftrightarrow\; \Gamma_i\phi \cap \Gamma_j \psi \neq \emptyset. 
\end{eqnarray}
In particular this is true whenever ${\rm rd}(I)\geq 3$ or ${\rm rd}(I)=1$ (but note that in the latter case, the $i$-faces or $j$-faces of $\mathcal{P}$ may not all lie in the same orbit as $\Phi_i$ or $\Phi_j$, respectively).
\end{lemma}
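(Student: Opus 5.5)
We prove both implications directly, following the blueprint for regular and chiral polytopes (cf.\ \cite[Prop.\,2B?]{MS2002}). Here $\Gamma_i$ and $\Gamma_j$ are the stabilizers of $\Phi_i$ and $\Phi_j$ by Lemma~\ref{gamstab}. For the improper ranks $i=-1$ or $j=n$ we interpret $\Gamma_{-1}=\Gamma_n=\Gamma$. The statement is trivial if $i=-1$ or $j=n$, and also if $i=j$, since two faces of the same rank are comparable only when equal, and $\Phi_i\phi=\Phi_i\psi$ means $\phi\psi^{-1}\in\Gamma_i$. So we may assume $0\leq i<j\leq n-1$.

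\emph{The implication ``$\Leftarrow$''.} Suppose $\gamma\in\Gamma_i\phi\cap\Gamma_j\psi$, say $\gamma=\alpha\phi=\beta\psi$ with $\alpha\in\Gamma_i$ and $\beta\in\Gamma_j$. Then
\[
\Phi_i\phi=\Phi_i\alpha\phi=\Phi_i\gamma \quad\text{and}\quad \Phi_j\psi=\Phi_j\beta\psi=\Phi_j\gamma .
\]
Since $\Phi_i\leq\Phi_j$ and $\gamma$ preserves order, we get $\Phi_i\phi\leq\Phi_j\psi$.

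\emph{The implication ``$\Rightarrow$''.} Suppose $F:=\Phi_i\phi\leq G:=\Phi_j\psi$, and choose a flag $\Lambda$ containing both $F$ and $G$. The task is to find $\gamma\in\Gamma$ with $\Phi_i\gamma=F$ and $\Phi_j\gamma=G$. Such a $\gamma$ gives $\gamma\phi^{-1}\in\Gamma_i$ and $\gamma\psi^{-1}\in\Gamma_j$, so that $\gamma\in\Gamma_i\phi\cap\Gamma_j\psi$. It therefore suffices to show that the section $G/F$ lies in the same $\Gamma$-orbit as the section $\Phi_j/\Phi_i$, with $\Phi_i\mapsto F$ and $\Phi_j\mapsto G$.

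\emph{Case $\bar I\not\subseteq\{i,j\}$.} Theorem~\ref{sections}(a) gives transitivity on $\mathcal{S}_{i,j}$, which yields the required $\gamma$ directly.

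\emph{Case $\bar I\subseteq\{i,j\}$.} Since $\bar I\neq\{i,j\}$ and $\bar I\neq\emptyset$, we have $\bar I=\{k\}$ with $k\in\{i,j\}$; by symmetry assume $k=i$. Lemma~\ref{omegaorb} and the two-orbit structure do not immediately help, so we argue with faces. The flags $\Phi\phi$ and $\Lambda$ both contain $F$. By strong flag-connectedness they are joined by a sequence of flags through $F$, with adjacencies only at ranks $\neq i$, hence at ranks in $I$. So $\Lambda$ lies in the orbit of $\Phi\phi$, which is the orbit of $\Phi$. Take $\gamma$ with $\Phi\gamma=\Lambda$. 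Then $\Phi_i\gamma=F$ and $\Phi_j\gamma=G$.

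This argument in fact works in every case: $\Lambda$ must lie in the orbit of $\Phi$ whenever it contains the image of one of $\Phi_i,\Phi_j$ under an element of $\Gamma$ at a rank in $\bar I$. Otherwise one can replace $\Lambda$ by $\Lambda^{k}$ for some $k\in\bar I\setminus\{i,j\}$, which still contains $F$ and $G$; this is exactly the content of Theorem~\ref{sections}(a).

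The main obstacle is the bookkeeping in the case $\bar I\subseteq\{i,j\}$. The hypothesis $\bar I\neq\{i,j\}$ is exactly what prevents both $i$ and $j$ from being ``orbit-switching'' ranks. If both were, $F$ and $G$ separately would lie in orbits of faces of $\Phi$, yet the pair $(F,G)$ could lie in the other section orbit of Theorem~\ref{sections}(b), and the equivalence would fail. We must check that the case analysis covers $\bar I=\{j\}$ symmetrically and handles the improper ranks correctly. The final remark of the lemma then follows: if ${\rm rd}(I)\geq3$ then $\bar I$ cannot equal $\{i,j\}$, and if ${\rm rd}(I)=1$ then $|\bar I|=1\neq|\{i,j\}|$ for $i<j$.
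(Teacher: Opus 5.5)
Your proof is correct and is essentially the paper's: the same one-line argument for ``$\Leftarrow$'', and for ``$\Rightarrow$'' the same split into two cases. The first is $\bar I\not\subseteq\{i,j\}$, handled by the $\Lambda$ versus $\Lambda^k$ trick, which the paper writes out and you cite as Theorem~\ref{sections}(a). The second is $\bar I=\{i\}$ or $\{j\}$, where all flags through the face at that rank lie in one orbit; the paper gets this from Lemma~\ref{rhoex}, and it is exactly Lemma~\ref{omegaorb} with $\Omega=\{F\}$, contrary to your remark that this lemma does not help.

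One correction to an aside: the claim that $\Lambda$ lies in the orbit of $\Phi$ whenever it contains $F$ or $G$ at a rank in $\bar I$ is false as stated, since that rank must be all of $\bar I$. Your main case analysis never relies on it.
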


\begin{proof}
One direction is straightforward. If $\Gamma_i\phi\cap\Gamma_j\psi\neq\emptyset$ and $\alpha\in\Gamma_i\phi\cap \Gamma_j\psi$, then
\[ \Phi_{i}\varphi=\Phi_{i}\alpha\leq\Phi_{j}\alpha=\Phi_{j}\psi,\]
and we are done. For the converse we treat the two cases $I \cup \{i,j\} \neq N$ and $I \cup \{i,j\} =N$ separately.

First, let $I\cup\{i,j\}\neq N$ and suppose that $\Phi_i\phi\leq\Phi_j\psi$. Choose $k\notin I\cup\{i,j\}$. If $\Psi$ is any flag of $\mathcal{P}$ such that $\Phi_i\phi, \Phi_j\psi \in \Psi$, then also $\Phi_i\phi, \Phi_j\psi \in \Psi^k$ and either $\Psi$ or $\Psi^k$ are in the same orbit as $\Phi$. By interchanging $\Psi$ or $\Psi^k$ if need be, we may assume that $\Psi$ itself is in the same orbit as $\Phi$, so $\Psi=\Phi\alpha$ for some $\alpha \in \Gamma(\mathcal{P})$. But then $\Phi_i\phi =  \Phi_i\alpha$ and $\Phi_j\psi=\Phi_j\alpha$, and hence $\alpha \in \Gamma_i\phi \cap \Gamma_j \psi$. Thus, $\Gamma_i\phi \cap \Gamma_j\neq\emptyset$.

Now let $I\cup\{i,j\}=N$. Then, since $\bar{I}\neq\{i,j\}$ by assumption, we have $\bar{I}=\{i\}$ or $\bar{I}=\{j\}$. First, suppose $\bar{I}=\{j\}$. By the definition of the class $2_I$ and by Lemma~\ref{rhoex} (applied with $K=\{j\}$), all flags containing $\Phi_j$ are in the same orbit under $\Gamma$ (in fact, even under $\Gamma_j$) as $\Phi$, and therefore all flags containing $\Phi_j\psi$ are in the same orbit under $\Gamma$ as $\Phi$. Hence, if $\Phi_i\phi \leq \Phi_j\psi$, then each flag containing both $\Phi_i\phi$ and $\Phi_j\psi$ lies  in the same orbit as $\Phi$. It follows that there exists $\alpha \in \Gamma(\mathcal{P})$ such that $\Phi_i\phi =  \Phi_i\alpha$ and $\Phi_j\psi=\Phi_j\alpha$, the latter showing that  $\alpha\in\Gamma_i\phi\cap\Gamma_j\psi$. Thus again, $\Gamma_i\phi \cap \Gamma_j\neq\emptyset$. The second case when $\bar{I}=\{i\}$ can be dealt with similarly. 
\end{proof}

The characterization of incidence in Lemma~\ref{char1} also provides a description of equality of faces in most cases. Clearly, two faces can only coincide if their ranks are the same, so this is the case $j=i$.  Thus Lemma~\ref{char1} will also tell us when two faces coincide, except when ${\rm rd}(I)=1$, $\bar{I}=\{j_0\}$, and the rank of the faces is $j_0$. In particular, when ${\rm rd}(I)\geq 2$, Lemma~\ref{char1} provides a full description of equality of faces of any rank.
\medskip

The next two lemmas deal with the incidence of faces for all the pairs of face ranks which are not covered by Lemma~\ref{char1}. These special considerations only involve class type sets $I$ with ${\rm rd}(I)\leq 2$. 
\smallskip

We start with the case ${\rm rd}(I)=2$. Here we may restrict ourselves to incident pairs of faces of distinct ranks $i$ and $j$ with $i<j$, as equality of faces has already been  covered by Lemma~\ref{char1}. 
\smallskip

\begin{lemma}
\label{char2}
Let $\mathcal{P}$ be a two-orbit $n$-polytope in the class $2_I$, with ${\rm rd}(I)=2$, and let $\Phi$ be the base flag of $\mathcal{P}$. Let $i,j$ be such that $-1 \leq i < j \leq n$ and $\bar{I}=\{i,j\}$, and let $\phi, \psi \in \Gamma(\mathcal{P})$.  Then the following equivalence holds: 
\begin{eqnarray}
\label{equivalence_n-2}
\Phi_i\phi \leq \Phi_j\psi\,\Longleftrightarrow\,\Gamma_i\phi \cap (\Gamma_j \cup \alpha_{i,j}\Gamma_j ) \psi \neq \emptyset.
\end{eqnarray}
\end{lemma}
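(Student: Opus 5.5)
Both directions of (\ref{equivalence_n-2}) will come from tracking which flag orbit contains the flags through the incident pair. Since $\bar{I}=\{i,j\}$, both $i,j\in N$, so $\alpha_{i,j}$ is a distinguished generator with $\Phi\alpha_{i,j}=\Phi^{i,j}$. The key preliminary observation is that $\Phi^{i,j}$ has the same $i$-face as $\Phi^{i}$ and the same $j$-face as $\Phi^{j}$; indeed $(\Phi^{i,j})_i=\Phi^i_i$, while $(\Phi^{i,j})_j=(\Phi^i)^j_j$ is, in general, a new $j$-face. I would rather work with the element as it acts on the left: $\alpha_{i,j}\Gamma_j\psi$ consists of the elements $\alpha_{i,j}\gamma\psi$ with $\gamma\in\Gamma_j$. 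Evaluating on $\Phi$ and using (\ref{append}), the flag $\Phi\alpha_{i,j}\gamma\psi=(\Phi\gamma\psi)^{i,j}$ has $i$-face $(\Phi\gamma\psi)^i_i$ and $j$-face $(\Phi\gamma\psi)^{i,j}_j$.

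\emph{Direction ($\Leftarrow$).} If the intersection meets $\Gamma_j\psi$, we argue exactly as in Lemma~\ref{char1}: a common element $\alpha$ gives $\Phi_i\phi=\Phi_i\alpha\le\Phi_j\alpha=\Phi_j\psi$. If instead $\alpha=\alpha_{i,j}\gamma\psi\in\Gamma_i\phi$ with $\gamma\in\Gamma_j$, then $\Phi_i\phi=\Phi_i\alpha$ and $\Phi_j\psi=\Phi_j\gamma\psi=\Phi_j\alpha_{i,j}^{-1}\alpha=\Phi_j\alpha_{j,i}\alpha$. Now $\Phi\alpha_{j,i}=\Phi^{j,i}$ has $j$-face $\Phi^j_j$ and $i$-face $\Phi^{j,i}_i$, so this alone does not immediately give the incidence. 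I would therefore rewrite the condition symmetrically by choosing whichever of $\alpha_{i,j}\Gamma_j$ or $\Gamma_i\alpha_{i,j}$ is convenient; note that $\Gamma_i\alpha_{i,j}\Gamma_j$ is a double coset. Take $\alpha=\delta\phi=\alpha_{i,j}\gamma\psi$ with $\delta\in\Gamma_i$. Then the flag $\Lambda:=\Phi\alpha_{i,j}\gamma\psi$ has $i$-face $\Phi_i\alpha=\Phi_i\phi$. Its $j$-face is $(\Phi\gamma\psi)^{i,j}_j$, which differs from $\Phi_j\psi$. Instead I would consider the flag $(\Phi\gamma\psi)^i$: its $i$-face is $(\Phi\gamma\psi)^i_i=\Lambda_i=\Phi_i\phi$, and its $j$-face is $\Phi_j\gamma\psi=\Phi_j\psi$. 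Hence $\Phi_i\phi\le\Phi_j\psi$. This is the clean route.

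\emph{Direction ($\Rightarrow$).} Suppose $\Phi_i\phi\le\Phi_j\psi$, and take any flag $\Psi$ containing both faces. If $\Psi$ lies in the orbit of $\Phi$, then as in Lemma~\ref{char1} we get $\Psi=\Phi\alpha$ with $\alpha\in\Gamma_i\phi\cap\Gamma_j\psi$. Otherwise all flags through these two faces lie in the other orbit; this is Lemma~\ref{omegaorb}, since $\bar{I}\subseteq\{i,j\}$. In that case $\Psi^i$ lies in the orbit of $\Phi$, so $\Psi^i=\Phi\beta$ for some $\beta$. Then $\Phi_j\beta=\Psi^i_j=\Phi_j\psi$, so $\beta\in\Gamma_j\psi$. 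Next, $\Psi=(\Phi\beta)^i$, and the flag $(\Phi\beta)^{i,j}=\Phi\alpha_{i,j}\beta$ has $i$-face $\Psi_i=\Phi_i\phi$. Therefore $\alpha_{i,j}\beta\in\Gamma_i\phi\cap\alpha_{i,j}\Gamma_j\psi$, as required.

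The main obstacle is keeping the left/right conventions of (\ref{append}) straight, so that the coset $\alpha_{i,j}\Gamma_j\psi$, rather than $\Gamma_j\alpha_{i,j}\psi$, is the one that appears; the two computations above are arranged to confirm precisely this. The rest is routine.
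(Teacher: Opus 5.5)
Your proof is correct and follows essentially the paper's argument: for $\alpha=\alpha_{i,j}\gamma\psi$ the paper's witnessing flag $\Phi^j\alpha$ is exactly your $(\Phi\gamma\psi)^i=\Phi^i\gamma\psi$, and in the forward direction your element $\alpha_{i,j}\beta$ is precisely the paper's $\alpha$ defined by $\Psi=\Phi^j\alpha$. Two side remarks you set aside are slightly off, though neither is used in your final argument: the $\alpha_{j,i}$ computation you abandoned already gives the incidence directly, because $\Phi^j$ contains $\Phi_i$ and $\Phi^{j,i}_j=\Phi^j_j$; and your opening claim that $\Phi^{i,j}$ has the same $j$-face as $\Phi^j$ holds only when $|i-j|\geq 2$.
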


\begin{proof} 
Define $\Sigma$ denote the intersection on the right hand side of (\ref{equivalence_n-2}).
Since the left cosets $\Gamma_j$ and $\alpha_{i,j}\Gamma_j$ of $\Gamma_j$ are disjoint, $(\Gamma_j \cup \alpha_{i,j}\Gamma_j )\psi$ is the disjoint union of $\Gamma_j\psi$ and $\alpha_{i,j}\Gamma_j\psi$. Moreover, 
\[\Sigma\,=\,(\Gamma_i\phi \cap \Gamma_j\psi) \cup (\Gamma_i\varphi \cap \alpha_{i,j}\Gamma_j \psi),\] 
which is also a disjoint union.

First, suppose that the condition on the right hand side of (\ref{equivalence_n-2}) holds;  that is, $\Sigma\neq\emptyset$. Let $\alpha \in \Sigma$. Then there are two possibilities. If $\alpha\in\Gamma_i\phi\cap\Gamma_j\psi$, then $\Gamma_{i}\phi=\Gamma_{i}\alpha$ and $\Gamma_{j}\psi=\Gamma_{j}\alpha$ and therefore
\[\Phi_i\phi =\Phi_{i}\alpha \leq \Phi_{j}\alpha=\Phi_j\psi.\] 
On the other hand, if  $\alpha \in \Gamma_i\phi\cap\alpha_{i,j}\Gamma_j  \psi$, then similarly $\Phi_i \phi = \Phi_i \alpha$ and $\Phi_{j}\psi =  \Phi_{j}\alpha_{i,j}^{-1}\alpha = \Phi_{j}\alpha_{j,i}\alpha$. Hence,
\[\Phi_i\,\phi = \Phi_i\, \alpha = \Phi_i^j \, \alpha \leq\Phi^j_j \,\alpha = \Phi^{j,i}_j \,\alpha = (\Phi \alpha_{j,i})_j\, \alpha = \Phi_j\, \alpha_{j,i} \,\alpha = \Phi_j \,\psi .\]
Thus in both cases, $\Phi_i\phi \leq \Phi_j\psi$, as required.

Conversely, suppose that $\Phi_i\phi \leq\Phi_j\psi$. By Lemma~\ref{rhoex} and the definition of the class $2_I$, with $\bar{I} = \{i,j\}$, any two flags containing the faces $\Phi_i\phi$ and $\Phi_j\psi$ must lie in the same orbit under $\Gamma$. More explicitly, by the strong flag-connectedness, any two such flags can be joined by a sequence of successively adjacent flags in which the adjacencies occur at ranks different from $i$ and $j$; as any pair of successively adjacent flags belongs  to the same orbit, the same must hold for the two given flags. Note that these flags may or may not belong to the same orbit as $\Phi$. This gives two possibilities. In either case we must show that $\Sigma\neq\emptyset$. 

Now, if the flags through $\Phi_i\phi$ and $\Phi_j\psi$ are in the same orbit as $\Phi$, and $\Psi$ is any such flag, then $\Psi=\Phi\alpha$ for some $\alpha \in \Gamma$. It follows that $\Phi_i\phi =  \Phi_i\alpha$, $\Phi_j\psi=\Phi_j\alpha$, and therefore $\alpha \in \Gamma_i\phi \cap \Gamma_j \psi$. Thus, $\alpha\in\Sigma$ and $\Sigma\neq \emptyset$.

On the other hand, if the flags through $\Phi_i\phi$ and $\Phi_j\psi$ are not in the same orbit as $\Phi$, we can argue as follows. Let $\Psi$ be any flag through $\Phi_i\phi$ and $\Phi_j\psi$. Then, since $j \notin I$, the flag $\Psi$ must be in the same orbit as $\Phi^j$, so there exists an element $\alpha \in \Gamma$ such that $\Psi=\Phi^j \alpha$. But then 
\[\Phi_{i}\phi=\Psi_{i}=(\Phi^j\alpha)_{i}=(\Phi^j)_{i}\,\alpha=\Phi_{i}\alpha\]
and
\[\Phi_{j}\psi =\Psi_{j} = (\Phi^{j}\alpha)_{j} =(\Phi^{j})_{j}\, \alpha = (\Phi^{j, i})_{j}\, \alpha =(\Phi \alpha_{j,i})_{j}\, \alpha =\Phi_{j}\,\alpha_{j,i} \,\alpha ,\]
and therefore $\alpha \in \Gamma_i\phi$ and $\alpha \in \alpha_{j,i}^{-1}\Gamma_{j}\psi = \alpha_{i,j}\Gamma_{j}\psi$, giving $\alpha\in\Gamma_i\phi \cap \alpha_{i,j}\Gamma_j \psi$. Thus, as in the previous case, $\alpha\in\Sigma$ and $\Sigma\neq \emptyset$.
\end{proof}
\medskip

It remains to complete the characterization of incidence of faces when ${\rm rd}(I)=1$, with $\bar{I}=\{j_0\}$ (say). Even though the polytope is not fully-transitive in this case, most instances for incidence of pairs of faces are still covered by Lemma~\ref{char1} and only involve the stabilizers of faces in the base flag~$\Phi$. However, in the remaining instances, namely when one face is a $j_0$-face, the characterization also involves the stabilizer of the $j_0$-face in the $j_0$-adjacent flag $\Phi^{j_0}$ of $\Phi$. More explicitly, when $\bar{I}=\{j_0\}$ the automorphism group $\Gamma=\Gamma(\mathcal{P})$ has a single orbit on the $l$-faces of $\mathcal{P}$ for each $l$ with $l \neq j_0$, but has two orbits on the $j_0$-faces of $\mathcal{P}$. The two orbits on the $j_0$-faces are represented by the $j_0$-face $\Phi_{j_0}$ of $\Phi$ and the $j_0$-face $\Phi^{j_0}_{j_0}$ of $\Phi^{j_0}$, whose stabilizers in $\Gamma$ are $\Gamma_{j_{0}}$ and~$\Gamma'_{j_{0}}$, respectively. Recall from Lemma~\ref{gamprimestabs} that $\Gamma'_{l}$ is the stabilizer of the $l$-face $\Phi_{l}^{j_0}$ in $\Phi^{j_0}$ for each $l$, and that $\Gamma'_{l}$ is generated as described in (\ref{gamsublprime}). 
\smallskip

The following lemma, in conjunction with Lemmas~\ref{char1} and \ref{char2}, settles the case when ${\rm rd}(I)=1$, except in one trivial instance discussed in (\ref{equiv_n-1Aplus}) below.

\begin{lemma}
\label{incnminus1}
Let $\mathcal{P}$ be a two-orbit $n$-polytope in the class $2_I$, with ${\rm rd}(I)=1$, and let~$\Phi$ be the base flag of $\mathcal{P}$. Let $i,j$ be such that $-1 \leq i < j \leq n$ and $\bar{I}=\{i\}$ or $\bar{I} = \{j\}$, and let $\phi,\psi \in \Gamma(\mathcal{P})$. \\[.04in]
(a)\ If $\bar{I} = \{i\}$, then the following equivalence holds: 
\begin{eqnarray}
\label{equiv_n-1A}
\Phi^i_i\,\phi\, \leq \Phi_j\,\psi  \,\Longleftrightarrow\, \Gamma'_i\,\phi \,\cap \,\Gamma_j \psi \neq \emptyset.
\end{eqnarray}
(b)\ If $\bar{I} = \{j\}$, then the following equivalence holds: 
\begin{eqnarray}
\label{equiv_n-1B}
\Phi_i\,\phi\, \leq \Phi^{j}_{j}\,\psi \,\Longleftrightarrow\, \Gamma_i\,\phi \,\cap\, \Gamma'_{j} \psi \neq \emptyset.
\end{eqnarray}
\end{lemma}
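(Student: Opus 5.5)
The plan is to mirror the proof of Lemma~\ref{char1}, with the base flag $\Phi$ replaced by its $j_0$-adjacent flag $\Phi^{j_0}$ and the stabilizers $\Gamma_l$ replaced by $\Gamma'_l$ where the relevant face comes from $\Phi^{j_0}$. I treat (a); part (b) is the same argument with the roles of $i$ and $j$ interchanged. Throughout, $\bar{I}=\{i\}$, so $\Phi^{i}$ lies in the orbit not containing $\Phi$.

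\emph{The easy direction.} Suppose $\alpha\in\Gamma'_i\phi\cap\Gamma_j\psi$. By Lemma~\ref{gamprimestabs}, $\Gamma'_i$ is the stabilizer of $\Phi^i_i$, so $\Phi^i_i\phi=\Phi^i_i\alpha$. Similarly $\Phi_j\psi=\Phi_j\alpha$. Since $j\neq i$, we have $\Phi_j=\Phi^i_j$, and $\Phi^i_i\leq\Phi^i_j$ because both lie in the flag $\Phi^i$. Applying the automorphism $\alpha$ gives
\[\Phi^i_i\phi=\Phi^i_i\alpha\leq\Phi_j\alpha=\Phi_j\psi.\]

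\emph{The converse.} Suppose $\Phi^i_i\phi\leq\Phi_j\psi$, and let $\Psi$ be any flag containing both faces. The key step is to show that $\Psi$ lies in the orbit of $\Phi^i$. The $i$-face $\Psi_i=\Phi^i_i\phi$ is in the orbit of $\Phi^i_i$. By Theorem~\ref{almost-fully-transitive}(b), this is the orbit of $i$-faces not represented by $\Phi_i$. Every flag in the orbit of $\Phi$ has its $i$-face in the orbit of $\Phi_i$, so $\Psi$ cannot be in the orbit of $\Phi$. Since there are only two flag orbits, $\Psi=\Phi^i\alpha$ for some $\alpha\in\Gamma$.

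It follows that $\Phi^i_i\phi=\Psi_i=\Phi^i_i\alpha$, so $\alpha\in\Gamma'_i\phi$. Also $\Phi_j\psi=\Psi_j=\Phi^i_j\alpha=\Phi_j\alpha$, so $\alpha\in\Gamma_j\psi$. Hence $\alpha$ lies in the intersection, which is therefore nonempty.

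\emph{Part (b) and the main obstacle.} For (b), with $\bar{I}=\{j\}$, the same argument applies. A flag through $\Phi_i\phi$ and $\Phi^j_j\psi$ has its $j$-face in the orbit of $\Phi^j_j$, so the flag lies in the orbit of $\Phi^j$, say it is $\Phi^j\alpha$. Then $\Phi_i=\Phi^j_i$ gives $\alpha\in\Gamma_i\phi\cap\Gamma'_j\psi$. The only delicate point in either part is the orbit argument. It relies on Theorem~\ref{almost-fully-transitive}(b) separating the two $j_0$-face orbits, which forces every flag through the given face to lie in the orbit of $\Phi^{j_0}$ rather than that of $\Phi$. The degenerate cases $i=-1$ and $j=n$ cause no trouble, since the improper faces are fixed by every automorphism.
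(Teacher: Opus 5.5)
Your proof is correct and follows the paper's argument. In both directions you pass through a flag $\Psi=\Phi^{i}\alpha$ (resp.\ $\Phi^{j}\alpha$) and read off membership in $\Gamma'_i\phi$ and $\Gamma_j\psi$ (resp.\ $\Gamma_i\phi$ and $\Gamma'_j\psi$) from its faces. The only minor difference is how you place $\Psi$ in the orbit of $\Phi^{i}$: you use the two distinct $i$-face orbits of Theorem~\ref{almost-fully-transitive}(b), whereas the paper argues via Lemma~\ref{rhoex} that all flags through $\Phi^{i}_{i}\phi$ are equivalent, hence equivalent to $\Phi^{i}\phi$ and so to $\Phi^{i}$.
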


\begin{proof}
We only prove the first part. The second part can be verified similarly.

So, let $\bar{I} = \{i\}$. First, suppose that $\Phi^i_i\,\phi\, \leq \Phi_j\,\psi$. Then any two flags containing the $i$-face $\Phi^i_i\phi$ lie in the same orbit; once again, this follows from Lemma~\ref{rhoex} and the definition of the class $2_I$. But $\Phi_i^i\phi$ is the $i$-face in the flag~$\Phi^i\phi$, and therefore any flag containing $\Phi_i^i\phi$ is equivalent under $\Gamma$ to $\Phi^i\phi$ and thus to $\Phi^i$. We can now argue as before. If $\Psi$ is any flag containing $\Phi^i_i\,\phi$ and $\Phi_j\,\psi$, then there exists an element $\alpha \in \Gamma$ such that $\Psi=\Phi^i \alpha$. Hence 
\[\Phi^i_i\,\phi =\Psi_{i}= \Phi^i_i \alpha\] 
and 
\[\Phi_j\,\psi =\Psi_{j} = \Phi_j^i \alpha = \Phi_j \alpha,\] 
and therefore $\alpha \in  \Gamma'_i\,\phi \,\cap \,\Gamma_j \psi$. Thus $\Gamma'_i\,\phi \,\cap \,\Gamma_j \psi\neq\emptyset$. 

The converse is straightforward. If $\Gamma'_i\,\phi \,\cap \,\Gamma_j \psi\neq\emptyset$ and $\alpha \in  \Gamma'_i\,\phi \,\cap \,\Gamma_j \psi$, then 
\[\Phi^i_i\,\phi =\Phi^i_i \alpha\leq \Phi_j^i \alpha=\Phi_j \alpha=\Phi_j\,\psi,\]
as required.
\end{proof}
\smallskip

Note that the case $j=i$ was excluded in Lemma~\ref{incnminus1}, for the reason that the two $i$-faces $\Phi_i$ and $\Phi^i_i$ of $\mathcal{P}$ lie in distinct orbits when $\bar{I}=\{i\}$.
\smallskip

Finally, to complete the discussion of equality of faces when $\bar{I}=\{i\}$, we only need to observe that $\Gamma'_i$ is the stabilizer of $\Phi^i_i$ in this case and therefore
\begin{eqnarray}
\label{equiv_n-1Aplus}
\Phi^i_i\,\phi=\Phi^i_i\,\psi \,\Longleftrightarrow\, \Gamma'_i\,\phi \,\cap \,\Gamma'_i \psi \neq \emptyset.
\end{eqnarray} 
The equality of $i$-faces of $\mathcal{P}$ of the form $\Phi_i\varphi$, as well as the equality of $j$-faces with $j\neq i$, are covered by Lemma~\ref{char1}.
\medskip

Summarizing the discussion in this section, we have established the following theorem.

\begin{theorem}
\label{ordercharacterize}
Let $\mathcal{P}$ be a two-orbit $n$-polytope in the class $2_I$, with $I\subset N$, and let~$\Phi$ be the base flag of $\mathcal{P}$. Then the partial order on $\mathcal{P}$ can be characterized in terms of the collections of distinguished subgroups $\big\{\Gamma_l\big\}_{l\in N}$ and $\big\{\Gamma'_l\big\}_{l\in N}$ of the automorphism group $\Gamma(\mathcal{P})$ determined by $\Phi$ and certain flags adjacent to $\Phi$. In particular, depending on the class type set $I$ and the face ranks involved, incidence between a pair of faces of $\mathcal{P}$ is described by the equivalences in (\ref{equivnminus3}), (\ref{equivalence_n-2}), (\ref{equiv_n-1A}), (\ref{equiv_n-1B}) or (\ref{equiv_n-1Aplus}).  
\end{theorem}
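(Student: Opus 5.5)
The plan is a case analysis on the class type set $I$ and on the ranks $i\leq j$ of the two faces. The goal is to check that every pair of faces of $\mathcal{P}$ is covered by one of the equivalences (\ref{equivnminus3}), (\ref{equivalence_n-2}), (\ref{equiv_n-1A}), (\ref{equiv_n-1B}) or (\ref{equiv_n-1Aplus}).

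First I will check that every face can be written in one of the required forms. By Theorem~\ref{almost-fully-transitive}, if ${\rm rd}(I)\geq 2$ then every $l$-face has the form $\Phi_l\phi$ for some $\phi\in\Gamma$. If ${\rm rd}(I)=1$ with $\bar I=\{j_0\}$, the same holds for every $l\neq j_0$, while every $j_0$-face has the form $\Phi_{j_0}\phi$ or $\Phi^{j_0}_{j_0}\phi$. The improper faces are fixed by everything and cause no trouble, since $\Gamma_{-1}=\Gamma_n=\Gamma$.

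Next comes the case split. If ${\rm rd}(I)\geq3$, the hypothesis $\bar I\neq\{i,j\}$ of Lemma~\ref{char1} holds for every pair of ranks, so (\ref{equivnminus3}) governs all incidences and all equalities. If ${\rm rd}(I)=2$ with $\bar I=\{j_0,k_0\}$, Lemma~\ref{char1} handles every pair of ranks except $\{i,j\}=\{j_0,k_0\}$. That remaining pair has $i<j$ after ordering, and it is handled by Lemma~\ref{char2}. Equality of faces occurs only when $i=j$, and that case again falls under Lemma~\ref{char1}.

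If ${\rm rd}(I)=1$ with $\bar I=\{j_0\}$, then $\bar I\neq\{i,j\}$ unless $i=j=j_0$. So Lemma~\ref{char1} covers every pair in which both faces lie in the orbits of faces of $\Phi$, including pairs involving $\Phi_{j_0}\phi$. For pairs where one face is $\Phi^{j_0}_{j_0}\phi$ and the other has a different rank, Lemma~\ref{incnminus1}(a) or (b) applies, according as the $j_0$-face is the smaller or the larger one.

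The remaining instances are comparisons of two $j_0$-faces. Two $j_0$-faces from different orbits are never equal or incident, because distinct faces of equal rank are incomparable. Two faces of the form $\Phi^{j_0}_{j_0}\phi$ and $\Phi^{j_0}_{j_0}\psi$ are compared by (\ref{equiv_n-1Aplus}), using Lemma~\ref{gamprimestabs} to identify $\Gamma'_{j_0}$ as the stabilizer of $\Phi^{j_0}_{j_0}$.

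The only delicate point is this bookkeeping in the ${\rm rd}(I)=1$ case: I must make sure that mixed-orbit $j_0$-faces and the trivial equal-rank instances are all accounted for. No new argument beyond the cited lemmas should be needed.
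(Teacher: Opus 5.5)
Your proposal is correct and follows the paper's own argument: the theorem is proved simply by assembling Lemmas~\ref{char1}, \ref{char2}, \ref{incnminus1} and (\ref{equiv_n-1Aplus}), after using Theorem~\ref{almost-fully-transitive} to write every face as $\Phi_l\phi$ or $\Phi^{j_0}_{j_0}\phi$, with the same case split by ${\rm rd}(I)$ and the ranks involved. One bookkeeping point: when $\bar I=\{j_0\}$, comparing $\Phi_{j_0}\phi$ with $\Phi_{j_0}\psi$ is not formally covered by Lemma~\ref{char1}, whose hypothesis $\bar I\neq\{i,j\}$ fails for $i=j=j_0$; however, (\ref{equivnminus3}) holds there trivially because $\Gamma_{j_0}$ is the stabilizer of $\Phi_{j_0}$, and the paper passes over this case in the same way.
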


\section{Small reflection deficiency}
\label{srd}

Recall that, for a two-orbit $n$-polytope $\mathcal{P}$ in the class $2_I$ with $I\subset N=\{0,\ldots,n-1\}$, the cardinality of the complement of $I$ is called the {\em reflection deficiency\/} of $\mathcal{P}$ and is denoted ${\rm rd}(\mathcal{P})$. Thus, ${\rm rd}(\mathcal{P})={\rm rd}(I)=|\bar{I}|$.

In the case of reflection deficiency 1, and in special cases of reflection deficiency 2, the group $\Gamma(P)$ of a two-orbit $n$-polytope $\mathcal{P}$ has additional intersection properties which generally do not seem to be implied by the standard intersection properties of Theorem~\ref{intcondlemma}. To discuss these properties let again~$\mathcal{P}$ belong to the class $2_I$, $I\subset N$, and set $\Gamma:=\Gamma(\mathcal{P})$. 

We begin with polytopes $\mathcal{P}$ of reflection deficiency 1. So, $\bar{I}=\{j_0\}$ (say). Then no non-trivial generators $\alpha_{j,k}$ with $j,k\notin I$ occur, and the set of distinguished generators reduces to a smaller set consisting only of the elements $\rho_i$ and $\alpha_{j_{0},i,j_{0}}$ with $i\in I$. Thus, 
\begin{equation}
\label{gengamsmall}
\Gamma = \langle \rho_i,\alpha_{j_{0},i,j_{0}} \mid i \in I\rangle.
\end{equation}
The additional intersection properties then take the following form.

\begin{lemma}
\label{intrefdef}
Let $\mathcal{P}$ be a two-orbit $n$-polytope in the class $2_I$, with $|I|=n-1$ and $\bar{I}=\{j_0\}$. Then, 
\begin{equation}
\label{intrefdef1one}
\begin{array}{cl}
\Gamma_{N\setminus\{j_{0}-1,j_{0}\}} \cap (\Gamma'_{j_0})^{-} = 
\langle\alpha_{j_{0},j_{0}-1,j_{0}}\rangle
&\;(j_{0}\neq 0),\\[.05in]
\Gamma_{N\setminus\{j_{0},j_{0}+1\}} \cap (\Gamma'_{j_0})^+ = 
\langle\alpha_{j_{0},j_{0}+1,j_{0}}\rangle
&\;(j_{0}\neq n-1), 
\end{array}
\end{equation}
or equivalently, expressed in terms of the generators, 
\begin{equation}
\label{intrefdef1two}
\begin{array}{l}
\langle\rho_{{j_0}-1},\alpha_{j_{0},j_{0}-1,j_{0}}\rangle 
\,\cap\, \langle\alpha_{j_{0},i,j_{0}}\mid i<j_{0}\rangle
\,=\, \langle\alpha_{j_{0},j_{0}-1,j_{0}}\rangle
\quad\, (j_{0}\neq 0),\\[.05in]
\langle\rho_{{j_0}+1},\alpha_{j_{0},j_{0}+1,j_{0}}\rangle 
\,\cap\, \langle\alpha_{j_{0},i,j_{0}}\mid i>j_{0}\rangle
\,=\, \langle\alpha_{j_{0},j_{0}+1,j_{0}}\rangle
\quad\, (j_{0}\neq n-1).
\end{array}
\end{equation}
\end{lemma}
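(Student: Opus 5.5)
The plan is to reinterpret both sides of (\ref{intrefdef1one}) as stabilizers of subchains of the flags $\Phi$ and $\Phi^{j_0}$. Then the intersection becomes the stabilizer of a single chain, which is almost a full flag of $\Phi^{j_0}$. I treat the first equation ($j_0\neq 0$); the second is the mirror image with ranks above $j_0$ and is handled the same way.

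\emph{Step 1: identify the groups.} Set $J_0:=N\setminus\{j_0-1,j_0\}$. Since $\bar I=\{j_0\}$, we have $I\cap\bar J_0=\{j_0-1\}$ and $\bar I\cap\bar J_0=\{j_0\}$. By (\ref{gamJ}), and since $\alpha_{j_0,j_0}=1$, this gives $\Gamma_{J_0}=\langle\rho_{j_0-1},\alpha_{j_0,j_0-1,j_0}\rangle$. By Lemma~\ref{gamstab}, $\Gamma_{J_0}$ is the stabilizer of $\Phi_{J_0}$.

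Next I read $(\Gamma'_{j_0})^-$ as the analogue of $\Gamma_l^-$ in (\ref{Gamma-}), formed with respect to $\Phi^{j_0}$. That is, $(\Gamma'_{j_0})^-=\Gamma'_{\{j_0,\ldots,n-1\}}$, which is generated by the $\rho'_i=\alpha_{j_0,i,j_0}$ with $i<j_0$. No $\alpha'$-generators occur, since $\bar I\cap\{0,\ldots,j_0-1\}=\emptyset$; here I use (\ref{changeofgenerators}). By Lemma~\ref{gamprimestabs}, this group is the stabilizer of the faces of $\Phi^{j_0}$ of ranks $j_0,\ldots,n-1$. This already shows that (\ref{intrefdef1one}) and (\ref{intrefdef1two}) are the same statements.

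\emph{Step 2: compute the intersection as a stabilizer.} Since $\Phi^{j_0}_k=\Phi_k$ for $k\neq j_0$, the intersection $\Gamma_{J_0}\cap(\Gamma'_{j_0})^-$ is the stabilizer of the chain
\[\{\Phi_k\mid k\neq j_0-1,j_0\}\cup\{\Phi^{j_0}_{j_0}\}=\Phi^{j_0}_{N\setminus\{j_0-1\}}.\]
The ranks $k>j_0$ are already fixed by $\Gamma_{J_0}$. Equivalently, I would invoke Lemma~\ref{intertwine} with $J=N\setminus\{j_0-1\}$, which contains $j_0$ and has $J\setminus\{j_0\}=J_0$. This gives $\Gamma'_{N\setminus\{j_0-1\}}=\Gamma_{J_0}\cap\Gamma'_{j_0}$. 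I would then check that $\Gamma_{J_0}\cap\Gamma'_{j_0}=\Gamma_{J_0}\cap\Gamma'_{\{j_0,\ldots,n-1\}}$, using Theorem~\ref{intcondlemmaprime} and the fact that $\Gamma_{J_0}\subseteq\Gamma'_{\{j_0+1,\ldots,n-1\}}=\Gamma_{\{j_0+1,\ldots,n-1\}}$ (first case of Lemma~\ref{intertwine}).

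\emph{Step 3: conclude.} By Lemma~\ref{gamprimestabs} and (\ref{gamjprime}), $\Gamma'_{N\setminus\{j_0-1\}}=\langle\rho'_{j_0-1}\rangle$, because $j_0-1\in I$. By (\ref{changeofgenerators}), $\rho'_{j_0-1}=\alpha_{j_0,j_0-1,j_0}$, which is the claim.

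The only delicate point is the bookkeeping in Step 2. One must make sure the intersection with $(\Gamma'_{j_0})^-$, rather than with the full stabilizer $\Gamma'_{j_0}$, loses nothing. This is exactly the containment $\Gamma_{J_0}\subseteq\Gamma'_{\{j_0+1,\ldots,n-1\}}$, so I expect it to go through without difficulty.
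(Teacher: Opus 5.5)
Your proof is correct and rests on the same idea as the paper's: the intersection is the stabilizer of the chain $\Phi^{j_0}_{N\setminus\{j_0-1\}}$, which is a flag of $\Phi^{j_0}$ with one face missing. The only difference is in the last step. The paper bounds the order of this stabilizer by $2$, because the chain lies in just two flags, and then exhibits the involution $\alpha_{j_0,j_0-1,j_0}$ inside it. You instead read the stabilizer off directly as $\Gamma'_{N\setminus\{j_0-1\}}=\langle\rho'_{j_0-1}\rangle$, using Lemma~\ref{gamprimestabs} and (\ref{changeofgenerators}).
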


\begin{proof}
The proof employs stabilizers of faces or chains closely related to the base flag $\Phi$ of~$\mathcal{P}$. 

We begin with the proof of the first statement. By Lemma~\ref{gamstab}, $\Gamma_{N\setminus\{j_{0}-1,j_{0}\}}$ is the stabilizer of the subchain $\Phi_{N\setminus\{j_{0}-1,j_{0}\}}$ of $\Phi$; and by Lemma~\ref{gamprimestabs}, the subgroup $(\Gamma'_{j_0})^{-}$ of $\Gamma'_{j_0}$ stabilizes the $j_{0}$-face $(\Phi^{j_0})_{j_0}$ of the $j_0$-adjacent flag $\Phi^{j_0}$ of $\Phi$. It follows that the intersection on the left hand side of the first equation in (\ref{intrefdef1one}) stabilizes the chain $\Phi_{N\setminus\{j_{0}-1,j_{0}\}}\cup\{(\Phi^{j_0})_{j_0}\}$ of $\mathcal{P}$ and therefore has order at most 2. On the other hand, the involution $\alpha_{j_{0},j_{0}-1,j_{0}}$ occurring on the right hand side of the first equation in (\ref{intrefdef1one}) trivially lies in this  intersection. Therefore the groups on the left and right hand sides coincide and their  order is 2.

The proof of the second statement of (\ref{intrefdef1one}) follows the same pattern. Now the group on the left side in the second equation must stabilize the chain $\Phi_{N\setminus\{j_{0},j_{0}+1\}}\cup\{(\Phi^{j_0})_{j_0}\}$ and hence must coincide with the group generated by $\alpha_{j_{0},j_{0}+1,j_{0}}$ on the right side.
\end{proof}
\smallskip

Next suppose a two-orbit $n$-polytope $\mathcal{P}$ has reflection deficiency 2 and lies in $2_I$, with $\bar{I}=\{j_0,k_0\}$ and $k_{0}=j_{0}+2$. Note the restrictive assumption on the pair $j_0,k_0$. Then we have the following additional intersection properties.

\begin{lemma}
\label{intrefdefnew}
Let $\mathcal{P}$ be a two-orbit $n$-polytope in the class $2_I$, with ${\rm rd}(I)=2$, $\bar{I}=\{j_0,k_0\}$ and $k_{0}=j_{0}+2$. Then, 
\begin{equation}
\label{intrefdef2}
\begin{array}{rl}
\Gamma_{k_0}^{-}\,\cap\,\Gamma_{j_0}^{+}\alpha_{j_0,k_0}\!\!\! &=\; \emptyset,\\[.08in]
\Gamma_{k_0}^{-}\,\cap\,\alpha_{k_0,j_0}\Gamma_{j_0}^{+}\alpha_{j_0,k_0} \!\!\!&=\, 
\langle \alpha_{j_0,j_{0}+1,j_{0}}\rangle.\\[.05in]
\end{array}
\end{equation}
\end{lemma}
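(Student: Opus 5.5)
The plan is to turn both statements into statements about stabilizers of chains near the base flag $\Phi$, using Lemma~\ref{gamstab}. Three facts carry the argument: an automorphism is determined by its image of one flag, $\Gamma$ acts freely on flags, and $\Phi$ and $\Phi^{j_0}$ lie in different orbits. Lemma~\ref{gamstab} identifies $\Gamma_{k_0}^{-}=\Gamma_{\{k_0,\ldots,n-1\}}$ with the stabilizer of $\Phi_{k_0},\ldots,\Phi_{n-1}$. It identifies $\Gamma_{j_0}^{+}=\Gamma_{\{0,\ldots,j_0\}}$ with the stabilizer of $\Phi_0,\ldots,\Phi_{j_0}$.

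For the second equation, I note that for any chain $S$ and any $\alpha\in\Gamma$ we have $\alpha^{-1}\Gamma_S\alpha=\Gamma_{S\alpha}$. Take $\alpha=\alpha_{j_0,k_0}$ and $S=\{\Phi_0,\ldots,\Phi_{j_0}\}$. Since $k_0=j_0+2$, we have $\Phi^{j_0,k_0}=\Phi^{k_0,j_0}$. Hence $S\alpha$ consists of the faces of ranks $\le j_0$ of $\Phi\alpha_{j_0,k_0}=\Phi^{j_0,k_0}$, namely $\Phi_0,\ldots,\Phi_{j_0-1},\Phi^{j_0}_{j_0}$. Since $\alpha_{k_0,j_0}=\alpha_{j_0,k_0}^{-1}$, the group $\alpha_{k_0,j_0}\Gamma_{j_0}^{+}\alpha_{j_0,k_0}$ is the stabilizer of this chain. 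The intersection with $\Gamma_{k_0}^{-}$ therefore stabilizes
\[\Omega:=\Phi_{N\setminus\{j_0,j_0+1\}}\cup\{\Phi^{j_0}_{j_0}\},\]
which is a flag minus its $(j_0+1)$-face. Exactly two flags contain $\Omega$, namely $\Phi^{j_0}$ and $\Phi^{j_0,j_0+1}$. By freeness of the action, the intersection has order at most $2$.

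It remains to exhibit $\alpha_{j_0,j_0+1,j_0}$ in both groups. It is a distinguished generator, since $j_0\notin I$ and $j_0+1\in I$. Both indices are $<k_0$, so it is a generator of $\Gamma_{k_0}^{-}$. It maps $\Phi^{j_0}$ to $\Phi^{j_0,j_0+1}$ (shown after (\ref{phiaction})), so it fixes every face of $\Omega$, and in particular $\Phi_0,\ldots,\Phi_{j_0-1},\Phi^{j_0}_{j_0}$. Hence it lies in the conjugated stabilizer. Being an involution, it generates the whole intersection of order $2$.

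For the first equation, suppose $\gamma=\delta\alpha_{j_0,k_0}$ with $\delta\in\Gamma_{j_0}^{+}$ and $\gamma\in\Gamma_{k_0}^{-}$. Then
\[\Phi_l\gamma=(\Phi_l\delta)\alpha_{j_0,k_0}=\Phi_l\alpha_{j_0,k_0}=(\Phi^{j_0,k_0})_l \quad (l\le j_0).\]
Hence the flag $\Phi\gamma$ contains $\Phi_0,\ldots,\Phi_{j_0-1},\Phi^{j_0}_{j_0}$. Since $\gamma\in\Gamma_{k_0}^{-}$, it also contains $\Phi_{k_0},\ldots,\Phi_{n-1}$. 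Thus $\Phi\gamma\supseteq\Omega$, so $\Phi\gamma$ is $\Phi^{j_0}$ or $\Phi^{j_0,j_0+1}$. Both of these lie in the flag orbit not containing $\Phi$, because $j_0\notin I$ and $j_0+1\in I$. This contradicts $\Phi\gamma$ lying in the orbit of $\Phi$, so the intersection is empty.

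The only delicate points are bookkeeping. I must check that the conjugation $\alpha^{-1}\Gamma_S\alpha=\Gamma_{S\alpha}$ is applied with the correct side, given the right-action convention. I must also confirm that $\alpha_{j_0,k_0}$ and its inverse sit on the correct sides in the coset $\alpha_{k_0,j_0}\Gamma_{j_0}^{+}\alpha_{j_0,k_0}$. The hypothesis $k_0=j_0+2$ is used exactly so that only one rank, $j_0+1$, is left free in $\Omega$. This forces order at most $2$ in the second equation and forces the orbit contradiction in the first.
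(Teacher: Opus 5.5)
Your proof is correct.

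For the second identity it follows the paper closely. Both arguments confine the intersection to the stabilizer of the chain $\Phi^{j_0}\setminus\{(\Phi^{j_0})_{j_0+1}\}$, which has order at most $2$, and then identify its nontrivial element as $\alpha_{j_0,j_0+1,j_0}$. The one difference is how you show $\alpha_{j_0,j_0+1,j_0}\in\alpha_{k_0,j_0}\Gamma_{j_0}^{+}\alpha_{j_0,k_0}$. You use the identity $\alpha^{-1}\Gamma_S\alpha=\Gamma_{S\alpha}$ together with Lemma~\ref{gamstab}. The paper instead uses relation (\ref{rel0more}) to write $\alpha_{j_0,j_0+1,j_0}=\alpha_{k_0,j_0}\alpha_{k_0,j_0+1,k_0}\alpha_{j_0,k_0}$, with $\alpha_{k_0,j_0+1,k_0}\in\Gamma_{j_0}^{+}$.

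For the first identity your route is more direct than the paper's. The paper restricts $\gamma$ to the polygon $\Phi_{k_0}/\Phi_{j_0-1}$ and argues that $\gamma$ must rotate the vertices by one step. It then uses $\rho_{j_0+1}$ to generate the full dihedral group, which yields an automorphism sending $\Phi$ to $\Phi^{j_0}$. You instead note that $\Phi\gamma\supseteq\Omega$ forces $\Phi\gamma\in\{\Phi^{j_0},\Phi^{j_0,j_0+1}\}$, and both of these flags lie in the other flag orbit. This is the same contradiction the dihedral argument reaches indirectly: when $\Phi\gamma=\Phi^{j_0,j_0+1}$, the element $\rho_{j_0+1}\gamma$ maps $\Phi$ to $\Phi^{j_0}$.

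Your version has three advantages. It avoids reasoning at the level of the rank-$2$ section. It handles both identities with the single chain $\Omega$. It also makes clear that the hypothesis $k_0=j_0+2$ is needed only so that exactly one rank, $j_0+1$, is left free.
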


\begin{proof}
First note that $\alpha_{j_0,k_0}$ is an involution since $k_{0}=j_{0}+2$. The proofs will again employ stabilizers of faces or chains closely related to the base flag $\Phi$ of~$\mathcal{P}$. 

Beginning with the first statement, suppose to the contrary that 
$\Gamma_{k_0}^{-}\,\cap\,\Gamma_{j_0}^{+}\alpha_{j_0,k_0}\neq\emptyset$ 
and let $\gamma\in\Gamma_{k_0}^{-}\cap\Gamma_{j_0}^{+}\alpha_{j_0,k_0}$. Then, by Lemma~\ref{gamstab}, $\gamma$~stabilizes each face $\Phi_l$ of $\Phi$ with $l\geq k_0$, and $\Phi_{l}\gamma=\Phi_{l}\alpha_{j_0,k_0}$ for each $l\leq j_0$ since $\Gamma_{j_0}^{+}$ stabilizes each $\Phi_l$ with $l\leq j_0$. By the definition of~$\alpha_{j_0,k_0}$,  
\[\Phi_{l}\alpha_{j_0,k_0} = (\Phi\alpha_{j_0,k_0})_{l} = (\Phi^{j_0,k_0})_l\]
for each $l\in N$, and therefore $\Phi_{l}\alpha_{j_0,k_0}=\Phi_l$ for $l\neq j_0,k_0$. Moreover, 
\[\Phi_{j_0}\gamma=\Phi_{j_0}\alpha_{j_0,k_0}=(\Phi\alpha_{j_0,k_0})_{j_0}
=(\Phi^{j_0,k_0})_{j_0}=(\Phi^{j_0})_{j_0}.\]
It follows that $\gamma$ fixes the entire chain $\{\Phi_l\mid l\neq j_0,j_{0}\!+\!1\}$ and thus acts faithfully on the polygonal section $\Phi_{k_0}/\Phi_{j_{0}-1}$ of rank 2. On this section, $\gamma$ acts as an automorphism that maps the base vertex $\Phi_{j_0}$ to the adjacent vertex $(\Phi^{j_0})_{j_0}$ in the base edge $\Phi_{j_{0}+1}$ of $\Phi_{k_0}/\Phi_{j_{0}-1}$. But since $j_0\notin I$, the polytope $\mathcal{P}$ does not admit an automorphism that maps $\Phi$ to $\Phi^{j_0}$, so $\gamma$ cannot interchange $\Phi_{j_0}$ and $(\Phi^{j_0})_{j_0}$ and must necessarily permute the vertices in $\Phi_{k_0}/\Phi_{j_{0}-1}$ cyclically by one step. On the other, since $j_{0}+1\in I$, the polytope $\mathcal{P}$ does admit an automorphism, namely $\rho_{j_{0}+1}$, which interchanges the base edge $\Phi_{j_{0}+1}$ of $\Phi_{k_0}/\Phi_{j_{0}-1}$ with the adjacent edge at vertex $\Phi_{j_0}$. Hence $\langle\gamma,\rho_{j_{0}+1}\rangle$ must be the full dihedral automorphism group of $\Phi_{k_0}/\Phi_{j_{0}-1}$ and must necessarily contain an element that interchanges the vertices in the base edge $\Phi_{j_{0}+1}$ of $\Phi_{k_0}/\Phi_{j_{0}-1}$. It follows that $\mathcal{P}$ must admit an automorphism that maps the base flag $\Phi$ to its $j_0$-adjacent flag $\Phi^{j_0}$, which is a contradiction to our assumption that $j_0\notin I$. Therefore, 
$\Gamma_{k_0}^{-}\cap\Gamma_{j_0}^{+}\alpha_{j_0,k_0}=\emptyset$. 

The second statement can be derived as follows. First observe that the element $\alpha_{j_0,j_{0}+1,j_{0}}$ occurring on the right hand side of the equation lies in the intersection on the left hand side. In fact, $\alpha_{j_0,j_{0}+1,j_{0}}\in\Gamma_{k_0}^{-}$  and by (\ref{rel0more}),
\[\alpha_{j_0,j_{0}+1,j_{0}} = \alpha_{k_0,j_0}\alpha_{k_0,j_{0}+1,k_{0}}\alpha_{j_0,k_0},\]
with $\alpha_{k_0,j_{0}+1,k_{0}}\in\Gamma_{j_0}^{+}$. Thus, 
$\alpha_{j_0,j_{0}+1,j_{0}}\in\Gamma_{k_0}^{-}\cap\alpha_{k_0,j_0}\Gamma_{j_0}^{+}\alpha_{j_0,k_0}$. 
This establishes one inclusion.

For the proof of the opposite inclusion, let  
$\gamma\in\Gamma_{k_0}^{-}\cap\alpha_{k_0,j_0}\Gamma_{j_0}^{+}\alpha_{j_0,k_0}$. 
Then $\gamma$ stabilizes each face $\Phi_l$ with $l\geq k_0$, and $\alpha_{k_0,j_0}^{-1}\gamma\alpha_{j_0,k_0}^{-1}$ stabilizes each face $\Phi_l$ with $l\leq j_0$. The latter shows that $\gamma$ stabilizes each face $\Phi_l\alpha_{j_0,k_0}$ with $l\leq j_0$. But $\Phi_l\alpha_{j_0,k_0}$ is just $\Phi_l$ if $l<j_0$, and coincides with $(\Phi^{j_0})_{j_0}$ if $l=j_0$. It follows that $\gamma$ must stabilize the chain 
\[\{\Phi_l\mid l\neq j_0,j_{0}\!+\!1\}\cup\{(\Phi^{j_0})_{j_0}\},\]
which in turn is just $\Phi^{j_0}\setminus\{(\Phi^{j_0})_{j_{0}+1}\}$. Thus $\gamma$ either fixes the flag $\Phi^{j_0}$ or interchanges the flags $\Phi^{j_0}$ and $(\Phi^{j_0})^{j_{0}+1}$. Clearly, if $\gamma$ fixes $\Phi^{j_0}$, then $\gamma=1$. However, if $\gamma$ interchanges $\Phi^{j_0}$ and $(\Phi^{j_0})^{j_{0}+1}$,
then 
\[\Phi\gamma = (\Phi^{j_0})^{j_0}\gamma = (\Phi^{j_0}\gamma)^{j_0}
= ((\Phi^{j_0})^{j_{0}+1})^{j_0}=\Phi^{j_0,j_{0}+1,j_0} = \Phi\alpha_{j_0,j_{0}+1,j_0}\]
and therefore $\gamma=\alpha_{j_0,j_{0}+1,j_0}$. Either way, $\gamma\in\langle\alpha_{j_0,j_{0}+1,j_0}\rangle$, as required.
\end{proof}
\smallskip

\subsection*{After-note}
As mentioned in the Introduction, the present article has existed in nearly complete form as an unpublished preprint for several years.
Over this time, our results have inspired a number of developments on symmetric abstract polytopes and maniplexes with two or more flag orbits under the automorphism group; we conclude this paper by discussing some of these developments and lines of work.

To start, we point out that our study has inspired the study of different classes of two-orbit polytopes as well as that of some $k$-orbit polytopes, such as those that one can find in \cite{CP2018, HM2022, MiSW2014, MoS2012, MoS2021, MoS2022}.

One prominent line of investigation that has led to major developments on abstract polytopes is that of their geometric realizations in Euclidean spaces, often referred to as skeletal polytopes. 
The articles by Gr\"unbaum~\cite{G1977a} and Dress~\cite{D1981,D1985} on the classification of geometrically regular polyhedra in ordinary space, now often called Gr\"unbaum-Dress polyhedra, provided an important impetus to this line of inquiry (see also \cite{MS1997}). 
{The recent Geometric Regular Polytopes monograph by McMullen~\cite{Mc2020}offers a comprehensive account on geometric realizations of abstract regular polytopes in Euclidean spaces.} Significant progress has { also} been made in the classification of skeletal chiral polytopes:\ 
those of rank $3$ in ordinary space were classified  { in Schulte~\cite{S2004,S2005}, while Pellicer~\cite{P2017} enumerated those of rank~$4$ in ordinary space}. Results for skeletal chiral polytopes in higher dimensional Euclidean spaces can be found in \cite{BG-CH2027, BHP2014, BHP2016, BHP2021, HT2026+, Pe2016, P2017, P2021, Pe2021, P2025, PWe2010}.
 As for other classes of two-orbit skeletal polytopes as well as skeletal polyhedra with several flag orbits, we refer the reader to \cite{BHP2016, CP2023, CP2024, HGV,    Ma2016a, Ma2016b, Mc2021, P2016, PWi2023, PWi2026+, SW2020}. 
These results reveal an interplay between combinatorial and geometric symmetry: geometric two-orbit polyhedra may be combinatorially regular, and conversely, combinatorial two-orbit behavior may arise from distinct geometric realizations.

A unifying perspective for studying polytopes with $k$ flag orbit is provided by the notion of a symmetry type graph, which encodes the orbit structure of flags under the automorphism group. 
In the two-orbit case, this graph has two vertices {and its edges are} colored by the ranks of the polytope, indicating how adjacent flags relate across or within orbits. 
While this framework suggests a wide range of possible symmetry types, a central problem is to determine which of these types are actually realized by abstract polytopes.
Additionally, the notion of an abstract polytope itself has been generalized to that of maniplexes (\cite{W2012}), a relatively new concept that bridges abstract polytopes and maps on surfaces,
and includes a much broader class of objects exhibiting weaker connectedness than abstract polytopes. { Polytopality is an important theme in maniplexes:\ 
necessary and sufficient conditions for a maniplex} to be the flag graph of a polytope have been described in~\cite{GVH2018}. {Symmetry type graphs, as well as voltage assignments of graphs, have greatly influenced the study of maniplexes.} 
Several publications on two-orbit or $k$-orbit structures { have exploited this connection 
(see for example ~\cite{CRHT2015,CP2018,HM2022,HM2023, Mo2021,Mo2024}).
Additionally, two-orbit hypermaps and chiral hypertopes have been investigated in the literature~\cite{ruiphd,FLW2016}.}



The reader may be wondering why we held off on publishing this article at an earlier time. The original plan for our work on two-orbit polytopes included a companion preprint, either ultimately integrated into the present article or to be published separately. This companion preprint was meant to fully characterize the groups that occur as automorphism groups of two-orbit polytopes of any rank and class. Despite significant efforts and progress, and largely also due to other demands on our time, we never completed the companion preprint. 
In the meantime, a characterization of groups of two-orbit polytopes has been obtained in Hubard \& Mochan~\cite{HM2023} using different generating sets for the groups (that can be derived from { those} given here, and vice-versa). 
As the present article fully stands on its own feet, we believe that it is overdue to publish our findings. 

\section*{Acknowledgments}

The first author was partially supported by Secihti-M\'exico under grant CBF-2025-I-224, and UNAM-PAPIIT IN108926. The work of the second author was partially supported by Simons Foundation Award No.\,420718. We also wish to thank Asia Weiss for numerous discussions on two-orbit polytopes and her continuing encouragement to complete this paper.

\end{document}